\newcommand{\Bmu}{\mbox{$\raisebox{-0.59ex}
  {$l$}\hspace{-0.18em}\mu\hspace{-0.88em}\raisebox{-0.98ex}{\scalebox{2}
  {$\color{white}.$}}\hspace{-0.416em}\raisebox{+0.88ex}
  {$\color{white}.$}\hspace{0.46em}$}{}}
\numberwithin{equation}{section}
\newfont{\cyr}{wncyr10 scaled 1100}
\newfont{\cyrr}{wncyr9 scaled 1000}
\theoremstyle{plain}
\newtheorem{theorem}{Theorem}[section]
\newtheorem{proposition}[theorem]{Proposition}
\newtheorem{lemma}[theorem]{Lemma}
\newtheorem{corollary}[theorem]{Corollary}
\theoremstyle{definition}
\newtheorem{definition}[theorem]{Definition}
\newtheorem{assumption}[theorem]{Assumption}
\theoremstyle{remark}
\newtheorem{remark}[theorem]{Remark}
\newcommand{\Q}{\mathds Q}
\newcommand{\Z}{\mathds Z}
\newcommand{\F}{\mathds F}
\DeclareMathOperator{\Aut}{Aut}
\DeclareMathOperator{\Frob}{Frob}
\DeclareMathOperator{\Hom}{Hom}
\DeclareMathOperator{\Gal}{Gal}
\DeclareMathOperator{\GL}{GL}
\DeclareMathOperator{\Sel}{Sel}
\DeclareMathOperator{\im}{\mathrm{im}}
\DeclareMathOperator{\coker}{\mathrm{coker}}
\DeclareMathOperator{\corank}{\mathrm{corank}}
\DeclareMathOperator{\rank}{\mathrm{rank}}
\DeclareMathOperator{\length}{length}
\newcommand{\cyc}{{\rom{cyc}}}
\newcommand{\ord}{\mathrm{ord}}
\newcommand{\Sha}{\mbox{\cyr{X}}}
\newcommand{\longmono}{\mbox{\;$\lhook\joinrel\longrightarrow$\;}}
\newcommand{\longepi}{\mbox{\;$\relbar\joinrel\twoheadrightarrow$\;}}
\newfont{\gotip}{eufb10 at 12pt}
\newcommand{\cO}{{\mathcal O}}
\newcommand{\p}{\mathfrak{p}}
\newcommand{\rom}{\mathrm}
\begin{document}

\title[On Bloch--Kato Selmer groups and Iwasawa theory]{On Bloch--Kato Selmer groups and Iwasawa theory\\of $p$-adic Galois representations}
\author{Matteo Longo and Stefano Vigni}

\thanks{The authors are supported by PRIN 2017 ``Geometric, algebraic and analytic methods in arithmetic''.}

\begin{abstract}  A result due to R. Greenberg gives a relation between the cardinality of Selmer groups of elliptic curves over number fields and the characteristic power series of Pontryagin duals of Selmer groups over cyclotomic $\Z_p$-extensions at good ordinary primes $p$. We extend Greenberg's result to more general $p$-adic Galois representations, including a large subclass of those attached to $p$-ordinary modular forms of level $\Gamma_0(N)$ with $p\nmid N$. 
\end{abstract}

\address{Dipartimento di Matematica, Universit\`a di Padova, Via Trieste 63, 35121 Padova, Italy}
\email{mlongo@math.unipd.it}
\address{Dipartimento di Matematica, Universit\`a di Genova, Via Dodecaneso 35, 16146 Genova, Italy}
\email{stefano.vigni@unige.it}

\subjclass[2010]{11R23, 11F80}

\keywords{Selmer groups, Iwasawa theory, $p$-adic Galois representations, modular forms}

\maketitle


\section{Introduction}

A classical result of R. Greenberg establishes a relation between the cardinality of Selmer groups of elliptic curves over number fields and the characteristic power series of Pontryagin duals of Selmer groups over cyclotomic $\Z_p$-extensions at good ordinary primes $p$. Our goal in this paper is to extend Greenberg's result to more general $p$-adic Galois representations, including a large subclass of those coming from $p$-ordinary modular forms of even weight and level $\Gamma_0(N)$ with $p\nmid N$. This generalization of Greenberg's theorem will play a role in our forthcoming paper on Kolyvagin's conjecture and a Bloch--Kato formula for higher (even) weight modular forms (\cite{LV-BSD}). 

Let us begin by recalling Greenberg's result. Let $E$ be an elliptic curve defined over a number field $F$, let $p$ be a prime number and suppose that $E$ has good ordinary reduction at all primes of $F$ above $p$. Moreover, assume that the $p$-Selmer group $\Sel_p(E/F)$ of $E$ over $F$ is finite. Let $F_\infty$ be the cyclotomic $\Z_p$-extension of $F$, set $\Gamma:=\Gal(F_\infty/F)\simeq\Z_p$ and write $\Lambda:=\Z_p[\![\Gamma]\!]$ for the corresponding Iwasawa algebra with $\Z_p$-coefficients, which we identify with the power series $\Z_p$-algebra $\Z_p[\![T]\!]$. Since $\Sel_p(E/F)$ is finite,  the $p$-primary Selmer group $\Sel_p(E/F_\infty)$ of $E$ over $F_\infty$ is $\Lambda$-cotorsion, \emph{i.e.}, the Pontryagin dual of $\Sel_p(E/F_\infty)$ is a torsion $\Lambda$-module (\cite[Theorem 1.4]{greenberg-cetraro}). Greenberg's result that is the starting point for the present paper (\cite[Theorem 4.1]{greenberg-cetraro}) states that if $f_E\in \Lambda$ is the characteristic power series of the Pontryagin dual of $\Sel_p(E/F_\infty)$, then
\begin{equation} \label{greenbergintro}
f_E(0)\sim \frac{\#\Sel_p(E/F)\cdot\prod_{\text{$v$ bad}}c_v(E)\cdot \prod_{v\mid p}\#\bigl(\tilde{E}_v(\F_v)\bigr)^2}{\#\bigl(E(F)_p\bigr)^2},
\end{equation}
where the symbol $\sim$ means that the two quantities differ by a $p$-adic unit, $c_v(E)$ is the Tamagawa number of $E$ at a prime $v$ of bad reduction, $\F_v$ is the residue field of $F$ at $v$, $\tilde{E}_v(\F_v)$ is the group of $\F_v$-rational points of the reduction $\tilde E_v$ of $E$ at $v$ and $E(F)_p$ is the $p$-torsion subgroup of the Mordell--Weil group $E(F)$. 

To the best of our knowledge, no generalization of this result is currently available to other settings of arithmetic interest, most notably that of modular forms of level $\Gamma_0(N)$ that are ordinary at a prime number $p\nmid N$ (see, however, \cite[Theorem 3.3.1]{JSW} for a result for Galois representations in an anticyclotomic imaginary quadratic context). In this article we offer a generalization of this kind. Now let us describe our main result more in detail.

Given a number field $F$ with absolute Galois group $G_F:=\Gal(\bar F/F)$ and an odd prime number $p$, we consider a $p$-ordinary (in the sense of Greenberg, \emph{cf.} \cite{greenberg-iwasawa}) representation 
\[ \rho_V:G_F\longrightarrow\Aut_K(V)\simeq\GL_r(K) \]
where $V$ is a vector space of dimension $r$ over a finite extension $K$ of $\Q_p$, equipped with a continuous action of $G_F$. We assume that $\rho_V$ is crystalline at all primes of $F$ above $p$, self-dual and unramified outside a finite set $\Sigma$ of primes of $F$ including those that either lie above $p$ or are archimedean. Writing $\cO$ for the valuation ring of $K$, we also fix a $G_F$-stable $\cO$-lattice $T$ inside $V$, set $A:=T\otimes_\cO(K/\cO)$ and assume that there exists a non-degenerate, skew-symmetric, Galois-equivariant pairing 
\[ T\times A\longrightarrow\Bmu_{p^\infty}, \]
where $\Bmu_{p^\infty}$ is the group of $p$-power roots of unity, so that $A$ and the Tate twist of the Pontryagin dual of $T$ become isomorphic. Finally, we impose on $V$ a number of technical conditions on invariant subspaces and quotients for the ordinary filtration at primes above $p$; the reader is referred to Assumption \ref{ass} for details. In particular, in \S \ref{sec2.3} we show that, choosing the prime number $p$ judiciously, these properties are enjoyed by the $p$-adic Galois representation attached by Deligne to a modular form of level $\Gamma_0(N)$ with $p\nmid N$.

As before, let $F_\infty$ be the cyclotomic $\Z_p$-extension of $F$ and set $\Gamma:=\Gal(F_\infty/F)$. Let $\Lambda:=\mathcal{O}[\![\Gamma]\!]$ be the Iwasawa algebra of $\Gamma$ with coefficients in $\cO$, which we identify with the power series $\cO$-algebra $\cO[\![T]\!]$. Finally, let $\Sel_\mathrm{Gr}(A/F_\infty)$ be the Greenberg Selmer group of $A$ over $F_\infty$ and let $\Sel_\mathrm{BK}(A/F)$ be the Bloch--Kato Selmer group of $A$ over $F$.  General control theorems due to Ochiai (\cite{Ochiai}) relate $\Sel_\mathrm{BK}(A/F)$ and $\Sel_\mathrm{Gr}(A/F_\infty)^\Gamma$, and one can think of our paper as a refinement of \cite{Ochiai} in which we describe in some cases the (finite) kernel and cokernel of the natural restriction map $\Sel_\mathrm{BK}(A/F)\rightarrow\Sel_\mathrm{Gr}(A/F_\infty)^\Gamma$. If $\Sel_\mathrm{BK}(A/F)$ is finite, then $\Sel_\mathrm{Gr}(A/F_\infty)$ is a $\Lambda$-cotorsion module, \emph{i.e.}, the Pontryagin dual $\Sel_\mathrm{Gr}(A/F_\infty)^\vee$ of $\Sel_\mathrm{Gr}(A/F_\infty)$ is a torsion $\Lambda$-module. Thus, when $\Sel_\mathrm{BK}(A/F)$ is finite we can consider the characteristic power series $\mathcal{F}\in\Lambda$ of $\Sel_\mathrm{Gr}(A/F_\infty)^\vee$. 

Our main result (Theorem \ref{main theorem}) is 

\begin{theorem} \label{thmintro}
If $\Sel_\mathrm{BK}(A/F)$ is finite, then $\mathcal{F}(0)\neq 0$ and 
\[ \#\bigl(\cO/\mathcal{F}(0)\cdot\cO\bigr)=\#\Sel_\mathrm{BK}(A/F)\cdot\prod_{\substack{v\in\Sigma\\v\nmid p}}c_v(A), \]
where $c_v(A)$ is the $p$-part of the Tamagawa number of $A$ at $v$. 
\end{theorem}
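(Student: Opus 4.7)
The plan is to adapt Greenberg's original argument (\cite[Theorem 4.1]{greenberg-cetraro}) to the setting of general $p$-adic Galois representations encoded by Assumption \ref{ass}. I would separate the proof into an algebraic ingredient---a $\Gamma$-Euler characteristic formula for torsion $\Lambda$-modules---and an arithmetic ingredient---a control theorem together with an explicit computation of local factors.

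On the algebraic side, the starting point is the standard fact that for a finitely generated torsion $\Lambda$-module $X$ with characteristic power series $f$, the condition $f(0)\neq 0$ is equivalent to the simultaneous finiteness of $X^\Gamma$ and $X_\Gamma$, and in that case the $\Gamma$-Euler characteristic $\#X_\Gamma/\#X^\Gamma$ equals $\#\bigl(\cO/f(0)\cO\bigr)$. I would apply this to $X:=\Sel_\mathrm{Gr}(A/F_\infty)^\vee$ and invoke Pontryagin duality (which exchanges $\Gamma$-invariants with $\Gamma$-coinvariants) to rewrite the target identity as
\[ \#\bigl(\cO/\cF(0)\cO\bigr)\;=\;\frac{\#\Sel_\mathrm{Gr}(A/F_\infty)^\Gamma}{\#\Sel_\mathrm{Gr}(A/F_\infty)_\Gamma}. \]
Establishing that $\cF(0)\neq 0$ thus reduces to proving the finiteness of $\Sel_\mathrm{Gr}(A/F_\infty)^\Gamma$, which will fall out of the control theorem below.

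The arithmetic core is the analysis of the natural restriction map
\[ \mathrm{res}\colon\Sel_\mathrm{BK}(A/F)\longrightarrow\Sel_\mathrm{Gr}(A/F_\infty)^\Gamma \]
furnished by Ochiai's work \cite{Ochiai}. I would arrange the data in a commutative diagram with exact rows comparing the two global Selmer groups and the products of local cohomology quotients $H^1(F_v,A)/L_{\mathrm{BK},v}$ and $H^1(F_{\infty,v},A)/L_{\mathrm{Gr},v}$, and then apply the snake lemma. The kernel of $\mathrm{res}$ injects into $H^1(\Gamma,A^{G_{F_\infty}})$, which under Assumption \ref{ass} is finite (and, in favourable cases, negligible), while the cokernel is controlled by the kernels of the local comparison maps at the primes $v\in\Sigma$. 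Counting cardinalities along this diagram converts the ratio displayed above into the product of $\#\Sel_\mathrm{BK}(A/F)$ with a family of purely local factors, one for each $v\in\Sigma$.

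The principal obstacle lies in the local computation. For $v\in\Sigma$ with $v\nmid p$ the extension $F_\infty/F$ is unramified at $v$ and $\Gamma$ acts on the relevant residue data through a procyclic quotient; an inflation--restriction analysis of the unramified subgroups then identifies the local contribution with the $p$-part of the Tamagawa number $c_v(A)$, in direct analogy with the elliptic curve case. For $v\mid p$ the situation is subtler: one must verify that the Bloch--Kato condition $H^1_f(F_v,A)$ and the Greenberg condition defined via the ordinary filtration $F^+V$ agree to the extent needed for the Euler characteristic argument, with any potential defect groups built from $(V/F^+V)^{G_{F_v}}$ and the dual object on $F^+V$ cancelling thanks to the vanishing hypotheses in Assumption \ref{ass}. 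These hypotheses are precisely what prevents spurious Euler factors from appearing at $v\mid p$. Once all the local factors have been pinned down, assembling them through the diagram yields
\[ \#\bigl(\cO/\cF(0)\cO\bigr)\;=\;\#\Sel_\mathrm{BK}(A/F)\cdot\prod_{\substack{v\in\Sigma\\v\nmid p}}c_v(A), \]
as required.
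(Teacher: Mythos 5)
Your overall decomposition---an Euler-characteristic identity on the algebraic side plus a snake-lemma comparison between the Bloch--Kato Selmer group over $F$ and the $\Gamma$-invariants of the Greenberg Selmer group over $F_\infty$, with local factor computations feeding in the Tamagawa numbers---is indeed the strategy the paper follows, and your local analysis (unramified inflation--restriction at $v\nmid p$ recovering $c_v(A)$, cancellation of potential defect at $v\mid p$ via the vanishing hypotheses in Assumption~\ref{ass}) matches the paper's Lemmas~\ref{greenberg-lemma2.6}, \ref{greenberg-lemma2.14} and \ref{greenberg-lemma2.15}.

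However, there is a genuine gap. Your diagram chase controls \emph{only the numerator} $\#\Sel_\mathrm{Gr}(A/F_\infty)^\Gamma$ of the Euler-characteristic ratio you wrote down; it says nothing about the denominator $\#\Sel_\mathrm{Gr}(A/F_\infty)_\Gamma$. You assert that ``counting cardinalities along this diagram converts the ratio displayed above'' into the desired product, but the snake-lemma diagram comparing $\Sel_\mathrm{BK}(A/F)$ with $\Sel_\mathrm{Gr}(A/F_\infty)^\Gamma$ never produces $S_\Gamma$. To finish, one must separately prove that $\Sel_\mathrm{Gr}(A/F_\infty)_\Gamma = 0$. This is not automatic and is in fact the most technically demanding part of the paper's argument: one first shows that the global $\Sigma$-restricted cohomology group $H^1(F_\Sigma/F_\infty,A)$ is $\Lambda$-cofinitely generated of corank $r^-\cdot[F:\Q]$ and has \emph{no proper $\Lambda$-submodules of finite index} (Proposition~\ref{submodules}), which requires computing $\Lambda$-coranks of the local terms (Lemma~\ref{corank}, drawing on Lemma~\ref{geenberg2.3} for the primes above $p$ and Lemma~\ref{finite} for $v\nmid p$) and invoking Greenberg's structural results \cite[Propositions~3, 4, 5]{greenberg-iwasawa}. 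One then deduces $H^1(F_\Sigma/F_\infty,A)_\Gamma = 0$ (Proposition~\ref{greenberg-prop2.16}, using a duality statement for coinvariants as in Lemma~\ref{C1}) and, comparing corank computations against the surjectivity of the restriction map to $\mathcal{P}^\Sigma_\mathrm{Gr}(A/F_\infty)$, concludes $S_\Gamma = 0$ (Corollary~\ref{greenberg-coro2.17}). Without this chain of arguments your formula remains off by the unknown factor $\#S_\Gamma$, so the proof as sketched does not close.

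A second, more minor remark: to make the snake lemma give exact control of $\coker(s)$ you must know that the top horizontal map $\delta_\Sigma\colon H^1(F_\Sigma/F,A)\to\mathcal{P}^\Sigma_\mathrm{BK}(A/F)$ is \emph{surjective}. This is a global duality statement (Lemma~\ref{surjectivity lemma}, following \cite[pp.~100--101]{greenberg-cetraro}) that relies on the finiteness of the dual Selmer group and the vanishing of $H^0(F_\Sigma/F,A^*)$, the latter itself a consequence of $A(F_\infty)=0$. You correctly flag that $A^{G_{F_\infty}}$ should vanish (the paper proves $A(F_\infty)=0$ via condition~\eqref{condition a}), but you should make explicit that this vanishing is needed both for the injectivity of $s$ and for the surjectivity of $\delta_\Sigma$; calling it merely ``finite (and, in favourable cases, negligible)'' undersells the role it plays.
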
 

The Tamagawa numbers $c_v(A)$ are defined in \S \ref{tamagawa-subsec}. It is worth pointing out that our local assumptions at primes of $F$ above $p$ ensure that the term corresponding to $E(F)_p$ in \eqref{greenbergintro} is trivial. Moreover, our conditions on the ordinary filtrations at primes $v$ of $F$ above $p$ force all the terms corresponding to $\tilde{E}_v(\F_v)$ in \eqref{greenbergintro} to be trivial as well. As will be apparent, our strategy for proving Theorem \ref{thmintro} is inspired by the arguments of Greenberg in \cite[\S 4]{greenberg-cetraro}.

We conclude this introduction with a couple of remarks of a general nature. First of all, several of our arguments can be adapted to other $\Z_p$-extensions $F_\infty/F$. However, in general 
this would require modifying the definition of Selmer groups at primes in $\Sigma$ that are not finitely decomposed in $F_\infty$. For example, suppose that $F$ is an imaginary quadratic field, $T$ is the $p$-adic Tate module of an elliptic curve over $\Q$ with good ordinary reduction at $p$ and $F_\infty$ is the anticyclotomic $\Z_p$-extension of $F$. Since prime numbers that are inert in $F$ split completely in $F_\infty$, some of the arguments described in this paper (\emph{e.g.}, the proof of Lemma \ref{corank}) fail and one needs to work with subgroups (or variants) of $\Sel_\mathrm{BK}(A/F)$ that are defined by imposing different conditions at primes in $\Sigma$ that are inert in $F$. For instance, the definition of Selmer groups in \cite[\S 2.2.3]{JSW} in the imaginary quadratic case requires that all the local conditions at inert primes be trivial, while in \cite{BD-IMC} one assumes ordinary-type conditions at those primes. Since the precise local conditions needed depend on the arithmetic situation being investigated, in this paper we chose to work with the cyclotomic $\Z_p$-extension of $F$ exclusively, thus considering only Bloch--Kato Selmer groups as defined below.  

Finally, we remark that our interest in $\Sel_\mathrm{Gr}(A/F_\infty)$ instead of the Bloch--Kato Selmer group $\Sel_\mathrm{BK}(A/F_\infty)$ of $A$ over $F_\infty$ is essentially motivated by the applications to \cite{LV-BSD} of the results in this paper. Actually, the results of Skinner--Urban on the Iwasawa main conjecture in the cyclotomic setting (\cite{SU}), which play a crucial role in \cite{LV-BSD}, are formulated in terms of $\Sel_\mathrm{Gr}(A/F_\infty)$ rather than $\Sel_\mathrm{BK}(A/F_\infty)$, which explains the focus of our article. However, alternative settings can certainly be considered; for example, \cite[Theorem 2.4]{Ochiai} establishes a relation between $\Sel_\mathrm{BK}(A/F)$ and $\Sel_\mathrm{BK}(A/F_\infty)^\Gamma$, and it would be desirable to prove a formula for the value at $0$ of the characteristic power series of $\Sel_\mathrm{BK}(A/F_\infty)$ analogous to that in Theorem \ref{thmintro}. 

\section{Galois representations} 

We fix the Galois representations that we consider in this paper, specifying our working assumptions. We will then show that these conditions are satisfied by a large class of $p$-ordinary crystalline representations attached to modular forms. 

\subsection{Notation and terminology} \label{sec2.1} 

To begin with, we introduce some general notation and terminology. If $p$ is a prime number and $M$ is a $\Z_p$-module $M$, then we write 
\[ M^\vee:=\Hom_{\Z_p}(M,\Q_p/\Z_p) \] 
for the Pontryagin dual of $M$. If $M$ is a module over the Galois group $\Gal(E/L)$ of some (Galois) field extension $E/L$, where $L$ is an extension of $\Q$ or $\Q_\ell$ for some prime number $\ell$, 
then we denote by $M(1)$ the Tate twist of $M$. Let $L$ be a local field of characteristic $0$, let $\bar L$ be a fixed algebraic closure of $L$ and let $G_L:=\Gal(\bar{L}/L)$ be the absolute Galois group of $L$. If $\Bmu_{p^\infty}\subset\bar L$ is the group of $p$-power roots of unity in $\bar L$, then local Tate duality 
\[ \langle\cdot,\cdot\rangle: H^i(G_L,M)\times H^{2-i}\bigl(G_L,M^\vee(1)\bigr)\longrightarrow H^2(G_L,\Bmu_{p^\infty})\simeq\Q_p/\Z_p \]
identifies $H^i(G_L,M)$ with $H^{2-i}\bigl(G_L,M^\vee(1)\bigr)$ for $i=0,1$. 

Let $F$ be a number field, let $\bar{F}$ be a fixed algebraic closure of $F$ and let $G_F:=\Gal(\bar{F}/F)$ be its absolute Galois group. For every prime $v$ of $F$ let $F_v$ be the completion of $F$ at $v$ and let $\mathcal{O}_v$ be the valuation ring of $F_v$. Moreover, let $G_{v}:=\Gal(\bar{F}_v/F_v)$ be the absolute Galois group of $F_v$ and let $I_v\subset G_v$ be the inertia subgroup. 

Let $T$ be a continuous $G_F$-module, which we assume to be free of finite rank $r$ over the valuation ring $\cO$ of a finite extension $K$ of $\Q_p$, where $p$ is an odd prime number. Fix a uniformizer $\pi$ of $\cO$ and set $\F:=\cO/(\pi)$ for the residue field of $K$, which is a finite field of characteristic $p$. Define $V:=T\otimes_\cO K$ and $A:=V/T=T\otimes_\cO(K/\cO)$, so that $T$ is an $\cO$-lattice inside $V$. Then $A\simeq(K/\mathcal{O})^r$ and $V\simeq K^r$ as groups and vectors spaces, respectively. Moreover, for every integer $n\geq 0$ there is an isomorphism of $G_F$-modules $T/\pi^nT\simeq A[\pi^n]$, where $A[\pi^n]$ is the $\pi^n$-torsion $\cO$-submodule of $A$. Set $T^*:=\Hom_{\Z_p}(A,\Bmu_{p^\infty})=A^\vee(1)$, $V^*:=T^*\otimes_{\cO}K$ and $A^*:=V^*/T^*=T^*\otimes_{\cO}K/\cO$, equipped with the induced actions of $G_F$.

The representation $V$ is \emph{self-dual} if there is a $G_F$-equivariant isomorphism $V\simeq V^*$. Let $V$ be self-dual and let us fix an isomorphism $\nu:V\overset\simeq\longrightarrow V^*$ as above. Suppose that $\nu(T)$ is homothetic to $T^*$, which means that there exists $\lambda\in K^\times$ such that $\lambda\cdot \nu(T)=T^*$ in $V^*$; this is an identification of $G_F$-modules, as the action of $G_F$ is $\mathcal{O}$-linear. The composition of $\nu$ with the multiplication-by-$\lambda$ map on $V^*$ induces an isomorphism of $G_F$-modules between the quotients $A=V/T$ and $A^*=V^*/T^*$.  

The representation $V$ is said to be \emph{ordinary} (in the sense of Greenberg) at a prime $v\,|\,p$ if there is a short exact sequence of representations
\[ 0\longrightarrow V^+\longrightarrow V\longrightarrow V^-\longrightarrow 0 \]
such that $I_v$ acts on $V^-$ via the cyclotomic character (see \cite{greenberg-iwasawa}). Set $r^+:=\dim_{K}(V^+)$ and $r^-:=\dim_K(V^-)$. Define $T^+:=V^+\cap T$ and $T^-:=V^-\cap T$, which are free $\cO$-submodules of $T$ of ranks $r^+$ and $r^-$, respectively. We also define 
$A^+:=V^+/T^+\simeq(K/\cO)^{r^+}$ and $A^-:=V^-/T^-\simeq(K/\cO)^{r^-}$. 

\subsection{Assumptions} \label{ass-subsec}

Notation being as in \S \ref{sec2.1}, write 
\[ \rho_V:G_F\longrightarrow\Aut_K(V)\simeq\GL_r(K) \]
and
\[ \rho_T:G_F\longrightarrow\Aut_\cO(T)\simeq\GL_r(\cO) \]
for the Galois representations associated with $V$ and $T$, respectively.

We work under the following assumption, which is slightly more restrictive than the one appearing in \cite{Ochiai}.

\begin{assumption}\label{ass}
\begin{enumerate} 
\item\label{condition 1} $\rho_V$ is unramified outside a finite set of primes of $F$; 
\item\label{condition 5} $\rho_V$ is crystalline at all primes $v\,|\,p$;
\item\label{condition 2} $\rho_V$ is self-dual;
\item\label{condition 6} $\rho_V$ is ordinary at all primes $v\,|\,p$; 
\item\label{condition 4} there is a skew-symmetric, $G_F$-equivariant and non-degenerate pairing 
\[ T\times A\longrightarrow\Bmu_{p^\infty} \] 
that induces a non-degenerate pairing 
\[ T/p^mT\times A[p^m]\longrightarrow\Bmu_{p^m} \] 
for every integer $m\geq1$, where $\Bmu_{p^m}$ is the group of $p^m$-th roots of unity (in particular, this gives an isomorphism $T^\vee(1)\simeq A$);   
\item\label{condition 7} for each prime $v\,|\,p$, one has:
\begin{enumerate}
\item\label{condition a} $H^0(F_v,A)=0$, 
\item\label{condition b} the largest cotorsion quotient of $H^0(I_v,A^-)$ is trivial,  
\item\label{condition c} $H^0\bigl({F_v},(T^-)^\vee(1)\bigr)=0$, 
\item\label{condition e} there exists a suitable basis such that $G_{F_v}$ acts on $A^+$ diagonally via non-trivial characters $\eta_1,\dots,\eta_{r^+}$ that do not coincide with the cyclotomic character. 
\end{enumerate}
\end{enumerate}
\end{assumption}

Let us define
\begin{equation} \label{sigma-set-eq}
\begin{split}
\Sigma:=\{\text{primes of $F$ at which $V$ is ramified}\}&\cup\{\text{primes of $F$ above $p$}\}\\
&\cup\{\text{archimedean primes of $F$}\}, 
\end{split}
\end{equation}
which is a finite set.

\begin{remark} \label{remark ass} 
We list some consequences of Assumption \ref{ass}. 
\begin{enumerate}
\item If the inertia group $I_v$ acts on $V^-$ via a non-trivial power of the cyclotomic character, then $H^0(I_v,V^-)=0$, so $H^0(I_v,T^-)=0$. Furthermore, $H^0(I_v,A^-)=0$ because $A^-=T^-\otimes_\mathcal{O}K/\mathcal{O}$. 
\item If $H^0(F_v,A)=0$, then the Galois-equivariant isomorphism $A\simeq T^\vee(1)$ ensures that $H^0\bigl(F_v,T^\vee(1)\bigr)=0$ as well. 
\item\label{remark3} If the isomorphism $V\simeq V^*$ induces an isomorphism $(T^\pm)^\vee(1)\simeq A^\mp$ (which is always true in the case of ordinary modular forms considered below), then the condition $H^0\bigl({F_v},(T^-)^\vee(1)\bigr)=0$ is equivalent to the condition $H^0(F_v,A^+)=0$, which is implied by \eqref{condition e}. 
\item\label{remark4} Since $V$ is ordinary at all $v\,|\,p$, it satisfies the so-called \emph{Panchishkin condition} at all such $v$ (\cite[Definition 2.2]{Ochiai}, \cite[\S 5.4]{panchishkin}).
\end{enumerate}
\end{remark}

\subsection{The case of modular forms} \label{sec2.3}

We want to check that if the prime number $p$ is chosen carefully, then Assumption \ref{ass} is satisfied by the $p$-adic Galois representation attached to a newform. Let $f(q)=\sum_{n\geq 1}a_n(f)q^n$ be a newform of weight $k\geq2$ and level $\Gamma_0(N)$. Let $\Q_f:=\Q\bigl(a_n(f)\mid n\geq1\bigr)$ be the Hecke field of $f$, which is a totally real number field. It is well known that the Fourier coefficients $a_n(f)$ are algebraic integers. 

Let $p$ be a prime number such that $p\nmid2N$ and fix a prime $\p$ of $\Q_f$ above $p$. We assume that
\begin{equation} \label{modular3}
\text{$\p$ is ordinary for $f$}. 
\end{equation}
In other words, we require $a_p(f)$ to be a $\p$-adic unit, \emph{i.e.}, $a_p(f)\in\cO^\times$. 

\begin{remark}
Let us say that a prime number $p$ is \emph{ordinary} for $f$ if $p\nmid a_p(f)$. Thanks to results of Serre on eigenvalues of Hecke operators (\cite[\S 7.2]{Serre-Cheb}), one can prove that if $k=2$, then the set of primes that are ordinary for $f$ has density $1$, so it is infinite (see, \emph{e.g.}, \cite[Proposition 2.2]{Fischman}). On the other hand, it is immediate to check that if $p$ is an ordinary prime for $f$ that is unramified in $\Q_f$, then there exists a prime $\wp$ of $\Q_f$ above $p$ such that $f$ is $\wp$-ordinary. As a consequence, a weight $2$ newform satisfies \eqref{modular3} with a suitable $\p$ for infinitely many primes $p$ (in fact, the set of such primes has density $1$). On the contrary, the existence of infinitely many ordinary primes for a modular form of weight larger than $2$ is, as far as we are aware of, still an open question.
\end{remark}

We also assume that
\begin{equation} \label{modular5} 
a_p(f)\not\equiv1\pmod{\p}.
\end{equation}
Write $K$ for the completion of $\Q_f$ at $\p$ and $\cO$ for the valuation ring of $K$. Let $V$ be the self-dual twist of the representation $V_{f,\p}$ of $G_\Q$ attached by Deligne to $f$ and $\p$ (\cite{Del-Bourbaki}), so that $V=V_{f,\p}(k/2)$. Choose a $G_\Q$-stable $\cO$-lattice $T\subset V$. The $\p$-adic representations 
\[ \rho_{f,\p}:G_\Q\longrightarrow \Aut_K(V)\simeq\GL_2(K) \]
and 
\[ \rho_{f,\p,T}:G_\Q\longrightarrow \Aut_\cO(T)\simeq\GL_2(\cO) \]
will play the roles of $\rho_V$ and $\rho_T$, respectively. In particular, $F=\Q$ in the notation of \S \ref{ass-subsec}. 

Now we show that Assumption \ref{ass} is satisfied by the representation $V$. First of all, it is well known that $V$ is unramified at all primes $\ell\nmid Np$ and crystalline at $p$: these properties correspond to conditions \eqref{condition 1} and \eqref{condition 5} in Assumption \ref{ass}. Furthermore, $V$ is the self-dual twist of $V_{f,\p}$, so \eqref{condition 2} in Assumption \ref{ass} is satisfied. On the other hand, \eqref{condition 4} in Assumption \ref{ass} corresponds to \cite[Proposition 3.1, (2)]{Nek}. 

Next, we show that \eqref{condition 6} and \eqref{condition 7} in Assumption \ref{ass} are satisfied. If $\ell\nmid Np$ and $\Frob_\ell$ is a geometric Frobenius at $\ell$, then the characteristic polynomial of $\rho_V(\Frob_\ell)$ is the Hecke polynomial $X^2-a_\ell(f)X+\ell^{k-1}$. Let $\alpha\in\cO^\times$ be the unit root of $X^2-a_p(f)X+p^{k-1}$, which exists because $f$ satisfies \eqref{modular3}, and let $\delta$ be the unramified character of the decomposition group $G_p:=\Gal(\bar{\Q}_p/\Q_p)$ given by $\delta(\Frob_p):=\alpha$, where $\Frob_p\in G_p/I_p$ is the arithmetic Frobenius. It is a classical result of Deligne and of Mazur--Wiles (see, \emph{e.g.}, \cite[Proposition 3.2]{Ochiai}) that the restriction of $V_{f,\p}$ to $G_p$ is equivalent to a representation of the form $\Big(\begin{smallmatrix}\delta^{-1}\chi_\cyc^{k-1}&c\\0&\delta\end{smallmatrix}\Big)$, where $c$ is a $1$-cocycle with values in $\cO$ and $\chi_\cyc:G_{\Q_p}\rightarrow\Z_p^\times$ is the $p$-adic cyclotomic character. It follows that the restriction of $V=V_{f,\p}(k/2)$ to $G_p$ is equivalent to a representation of the form $\Big(\begin{smallmatrix}\delta^{-1}\chi_\cyc^{k/2-1}&c\chi_\cyc^{-k/2}\\0&\delta\chi_\cyc^{-k/2}\end{smallmatrix}\Big)$. Thus, \eqref{condition 6} in Assumption \ref{ass} is satisfied and there is an exact sequence of $G_{\Q_p}$-modules 
\[ 0\longrightarrow V^+\longrightarrow V\longrightarrow V^-\longrightarrow0 \] 
such that $V^+$ and $V^-$ are $1$-dimensional $K$-vector spaces on which $G_{\Q_p}$ acts via $\delta^{-1}\cdot\chi_\cyc^{k/2-1}$ and $\delta\cdot\chi_\cyc^{-k/2}$, respectively. It follows that \eqref{condition e} in Assumption \ref{ass} is satisfied. Now, $I_{p}$ acts on $A^-\simeq K/\cO$ via the $(-k/2)$-th power of the cyclotomic character, which is non-trivial, so $H^0(I_{p},A^-)=0$, which shows that \eqref{condition b} in Assumption \ref{ass} holds. As for \eqref{condition c}, see part (3) of Remark \ref{remark ass}. 

Finally, we prove that \eqref{condition a} is satisfied. Suppose that $H^0(\Q_p,A)\neq 0$. Then one can find an element $a\in A[\p]$ such that $\sigma(a)=a$ for all $\sigma\in G_p$. Since $T/\p T\simeq A[\p]$, we may regard $a$ as an element of $T/\p T$. Choose a basis $\{e_1\}$ of the $1$-dimensional $\F$-vector space $T^+/\p T^+$ and complete it to a $\F$-basis $\{e_1,e_2\}$ of $T/\p T$. The action of $\sigma\in G_p$ on $T/\p T$ is then given by the matrix 
\[ \bar{\rho}_{f,\p}(\sigma)=\begin{pmatrix}\left(\delta^{-1}\cdot\chi_\cyc^{k/2-1}\right)(\sigma) & \left(c\cdot\chi_\cyc^{-k/2}\right)(\sigma)\\0 & \left(\delta\cdot\chi_\cyc^{-k/2}\right)(\sigma)\end{pmatrix}\pmod{\p}. \] 
Write $a=x_1e_1+x_2e_2$ with $x_1,x_2\in\F$. Let $v\,|\,p$ be a prime of $F$, set $\bar F_v:=\bar\Q_p$ and let $G_v:=\Gal(\bar{F}_v/F_v)\subset G_p$. The action of $\sigma\in G_v$ on $a$ is given by 
\begin{equation} \label{modularform}
\sigma(a)=\Bigl(x_1\bigl(\delta^{-1}\cdot\chi_\cyc^{k/2-1}\bigr)(\sigma)+x_2\bigl(c\cdot\chi_\cyc^{-k/2}\bigr)(\sigma)\Bigr)e_1+x_2\Bigl(\delta\cdot\chi_\cyc^{-k/2}\Bigr)(\sigma)e_2 \pmod{\p}.
\end{equation}
Then, in light of \eqref{modular5}, one easily shows from \eqref{modularform} that $\sigma(a)\neq a$. First suppose that $x_2=0$. Then $x_1\neq 0$ because $a\neq 0$, so it is enough to find $\sigma\in G_p$ 
such that $x_1\bigl(\delta^{-1}\cdot\chi_\cyc^{k/2-1}\bigr)(\sigma)\neq 1$. Let $\Frob_p\in\Gal(\Q_p^\mathrm{unr}/\Q_p)$ be an arithmetic Frobenius. If $k=2$, then $\Frob_p^j(a)=x_1\bar\alpha^{-j}e_1$ for all integers $j$, where $\bar\alpha=\alpha\pmod{\p}$. Since $\bar\alpha\neq 1$, one may find $j$ such that $x_1\bar\alpha^{-j}\neq 1$, and so $\sigma(a)\neq a$. If $k>2$, then a similar argument works. Namely, choose an integer $j$ such that $x_1\alpha^{-j}\neq 1$, pick a lift $F\in G_p$ of $\Frob_p^j$ and let $\bar{F}$ be the image of $F$ in $\Gal\bigl(\Q_p(\Bmu_{p^\infty})/\Q_p\bigr)$, so that $\chi_\cyc(F)=\chi_\cyc(\bar{F})$. 
If $x_1\alpha^{-j}\cdot\chi_\cyc^{k/2-1}(F)\neq 1$, then we are done; otherwise, since $x_1\alpha^{-j}\neq 1$, we see that $\chi_\cyc^{k/2-1}(\bar{F})\neq 1$, so the map $\sigma\mapsto \chi_\cyc^{k/2-1}$ is not the trivial character. It follows that $\chi_\cyc^{k/2-1}:\Gal\bigl(\Q_p(\Bmu_p)/\Q_p\bigr)\rightarrow\F_p^\times$ is surjective, and therefore we can find $\sigma\in I_p$ such that $\chi_\cyc^{k/2-1}(\sigma)\neq1$. Then
\[ \begin{split}
   x_1\bigl(\delta^{-1}\cdot\chi_\cyc^{k/2-1}\bigr)(F\sigma)&=x_1\alpha^{-j}\cdot\chi_\cyc^{k/2-1}(F\sigma)\\&=x_1\alpha^{-j}\cdot\chi_\cyc^{k/2-1}(F)\cdot\chi_\cyc^{k/2-1}(\sigma)=\chi_\cyc^{k/2-1}(\sigma)\neq1,
   \end{split} \] 
and we are done. The case $x_2\neq 0$ is similar; in fact, it suffices to show that there exists $\sigma$ such that $x_2\bigl(\delta\cdot\chi_\cyc^{-k/2}\bigr)(\sigma)\neq 1$. Choose $j$ such that $x_2\bar\alpha^{-j}\neq 1$ and fix a lift $F\in G_p$ of $\Frob_p^j$. If $x_2\alpha^{-j}\cdot\chi_\cyc^{-k/2}(F)\neq1$, then we are done, otherwise $\chi_\cyc^{-k/2}:\Gal\bigl(\Q_p(\Bmu_p)/\Q_p\bigr)\rightarrow\F_p^\times$ is surjective, so we can find $\sigma\in I_p$ such that $\chi_\cyc^{-k/2}(\sigma)\neq1$. It follows that
\[ \begin{split}
   x_2\bigl(\delta\cdot\chi_\cyc^{-k/2}\bigr)(F\sigma)&=x_2\alpha^{-j}\cdot\chi_\cyc^{-k/2}(F\sigma)\\
   &=x_2\alpha^{-j}\cdot\chi_\cyc^{-k/2}(F)\cdot\chi_\cyc^{-k/2}(\sigma)=\chi_\cyc^{-k/2}(\sigma)\neq1, 
   \end{split} \] 
and the proof of \eqref{condition a} in Assumption \ref{ass} is complete.

\section{Selmer groups} \label{sec-Selmer} 

Notation from \S \ref{sec2.1} is in force; moreover, we work under Assumption \ref{ass}. As before, let $F_\infty$ be the cyclotomic $\Z_p$-extension of $F$. Set $\Gamma:=\Gal(F_\infty/F)\simeq\Z_p$, choose a topological generator $\gamma$ of $\Gamma$ and let $\Lambda:=\mathcal{O}[\![{\Gamma}]\!]$ be the Iwasawa algebra of $\Gamma$ with coefficients in $\cO$, which can be identified with the formal power series $\cO$-algebra $\mathcal{O}[\![T]\!]$ by sending $\gamma$ to $T+1$. For every integer $n\geq0$ write $F_n$ for the $n$-th layer of $F_\infty/F$, \emph{i.e.}, the unique extension $F_n$ of $F$ such that $F_n\subset F_\infty$ and $\Gal(F_n/F)\simeq\Z/p^n\Z$ (in particular, $F_0=F$). For every prime $v$ of $F_n$ denote by $F_{n,v}$ the completion of $F_n$ at $v$, let $G_{n,v}:=\Gal(\bar{F}_{n,v}/F_{n,v})$ be the absolute Galois group of $F_{n,v}$ and let $I_{n,v}\subset G_{n,v}$ be the inertia subgroup. If $n=0$, in the previous notation we have $G_v=G_{0,v}$ and $I_v=I_{0,v}$. We also set $G_{\infty,v}:=\Gal(\bar{F}_v/F_{\infty,v})$ 
and denote by $I_{\infty,v}$ its inertia subgroup. 
%

\subsection{Local conditions at $\ell\neq p$} 

Fix an integer $n\geq0$. Fix also a prime number $\ell\neq p$ and a prime $v\,|\,\ell$ of $F_n$. For $\star\in\{V,A\}$, define 
\[ H^1_\mathrm{un}(F_{n,v},\star):=\ker\Bigl(H^1(F_{n,v},\star)\longrightarrow H^1(I_{n,v},\star)\Bigr). \]
Here, as customary, $H^1(F_{n,v},\star)$ stands for $H^1(G_{n,v},\star)$. By functoriality, there is a map $H^1(F_{n,v},V)\rightarrow H^1(F_{n,v},A)$; set
\[ H^1_f(F_{n,v},V):=H^1_\mathrm{ur}(F_{n,v},V),\quad H^1_f(F_{n,v},A):=\im\Bigl(H^1_f(F_{n,v},V)\longrightarrow H^1(F_{n,v},A)\Bigr). \] 
The commutative diagram
\[\xymatrix{
0\ar[r]& 
H^1_\mathrm{ur}(F_{n,v},V)\ar[r]&
H^1(F_{n,v},V)\ar[r]\ar[d]&
H^1(I_{n,v},V)\ar[d]\\
0\ar[r]& 
H^1_\mathrm{ur}(F_{n,v},A)\ar[r]&
H^1(F_{n,v},A)\ar[r]&
H^1(I_{n,v},A)\\
}\] 
shows that $H^1_f(F_{n,v},A)\subset H^1_\mathrm{ur}(F_{n,v},A)$.

\subsection{Local Tamagawa numbers} \label{tamagawa-subsec}

For every prime $v$ of $F$ we introduce the $p$-part of the Tamagawa number of $A$ at $v$. As we shall see, the product of these integers will appear in our main result.

\begin{lemma} \label{finite-index-lemma}
The index $\bigl[H^1_\mathrm{ur}(F_v,A):H^1_f(F_v,A)\bigr]$ is finite. 
\end{lemma}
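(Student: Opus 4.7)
The plan is to restrict to primes $v\nmid p$ (for which $H^1_f(F_v,A)$ was just defined in \S 3.1) and to reduce the finiteness of the index to the finiteness of a single auxiliary $\cO$-module. First I would use that $G_v/I_v\simeq\hat{\Z}$ is topologically generated by the Frobenius $\Frob_v$ and has cohomological dimension $1$, so that inflation yields
\[ H^1_\mathrm{ur}(F_v,M)\;\simeq\;M^{I_v}\big/(\Frob_v-1)M^{I_v}\qquad(M\in\{V,A\}). \]
Under this identification, $H^1_f(F_v,A)$, being the image of $H^1_\mathrm{ur}(F_v,V)$ in $H^1(F_v,A)$, becomes the image in $A^{I_v}/(\Frob_v-1)A^{I_v}$ of the Frobenius-coinvariants of $W:=V^{I_v}/T^{I_v}$, where $W$ is the image of $V^{I_v}$ in $A$ and is a divisible $\cO$-submodule of $A^{I_v}$.

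Next I would apply $\hat{\Z}$-cohomology to the short exact sequence $0\to W\to A^{I_v}\to A^{I_v}/W\to 0$; as $\hat{\Z}$ has cohomological dimension $1$, the right-exact piece that one obtains reads
\[ W\big/(\Frob_v-1)W\longrightarrow A^{I_v}\big/(\Frob_v-1)A^{I_v}\longrightarrow (A^{I_v}/W)\big/(\Frob_v-1)(A^{I_v}/W)\longrightarrow 0. \]
Consequently $[H^1_\mathrm{ur}(F_v,A):H^1_f(F_v,A)]$ is bounded above by $\#(A^{I_v}/W)$, and it suffices to prove that $A^{I_v}/W$ is finite.

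For this last point, the long exact sequence of $I_v$-cohomology associated with $0\to T\to V\to A\to 0$ identifies $A^{I_v}/W$ with the kernel of the natural map $H^1(I_v,T)\to H^1(I_v,V)$. Since continuous cohomology of a profinite group commutes with $\otimes_\cO K$, this kernel is exactly the $\cO$-torsion submodule of $H^1(I_v,T)$, so the problem is reduced to showing that the torsion of $H^1(I_v,T)$ is finite.

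The main obstacle, and the only place where the hypothesis $v\nmid p$ is essential, is to prove that $H^1(I_v,T)$ is finitely generated over $\cO$. The key input is the standard structure of inertia away from $p$: the wild inertia $P_v$ is pro-$\ell$ with $\ell\neq p$, so it acts on the pro-$p$ lattice $T$ through a finite quotient, while the tame quotient $I_v/P_v$ acts through its pro-$p$ part, which is a topologically procyclic copy of $\Z_p(1)$. A Hochschild--Serre argument along the filtration $1\subset P_v\subset I_v$ then expresses $H^1(I_v,T)$ in terms of finite-group cohomology and the cohomology of $\Z_p$, both of which preserve finite generation over $\cO$. The torsion submodule of a finitely generated $\cO$-module being finite, this closes the argument.
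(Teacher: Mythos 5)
Your argument is correct in substance, but note that the paper does not actually prove this lemma: the proof is a citation of Rubin's \emph{Euler Systems} (Lemma I.3.5), and the argument you give is essentially a reconstruction of the proof of that cited result rather than a genuinely different route. Your chain of reductions --- identify $H^1_\mathrm{ur}(F_v,\star)$ with Frobenius-coinvariants of $\star^{I_v}$, bound the index by $\#\bigl(A^{I_v}/W\bigr)$ where $W$ is the (divisible) image of $V^{I_v}$ in $A^{I_v}$, identify $A^{I_v}/W$ with the $\cO$-torsion of $H^1(I_v,T)$, and show the latter is finitely generated using the structure of inertia away from $p$ --- is the standard one; what it buys is self-containedness in place of the reference. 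Your implicit restriction to finite $v\nmid p$ is also the right reading of the statement (the group $H^1_f(F_v,A)$ in play is the one defined via unramified classes; for archimedean $v$ everything is trivial since $p$ is odd), though you should say this explicitly.

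Two justifications are phrased too broadly, even if the conclusions you draw from them are correct. First, the tame quotient $I_v/P_v$ need \emph{not} act on $T^{P_v}$ through its pro-$p$ part: inertia may well act through a finite quotient of order prime to $p$. What your Hochschild--Serre argument actually needs is only that the prime-to-$p$ part of $I_v/P_v$ has vanishing higher cohomology on $p$-primary (or pro-$p$) coefficients, so that $H^1$ is computed from the $\Z_p$-part acting on invariants; with that correction the finite generation of $H^1(I_v,T)$ over $\cO$ goes through. Second, ``continuous cohomology commutes with $\otimes_\cO K$'' requires finiteness hypotheses in general; but the only consequence you use --- that $\ker\bigl(H^1(I_v,T)\to H^1(I_v,V)\bigr)$ equals the $\cO$-torsion submodule of $H^1(I_v,T)$ --- holds unconditionally: a continuous $T$-valued cocycle that becomes a coboundary in $V$, say $c(g)=(g-1)v$, satisfies $\pi^n c=(g-1)\pi^n v$ with $\pi^n v\in T$ for some $n$, hence is torsion, and conversely. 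With these rephrasings the proof is complete and agrees with the argument behind the reference the paper invokes.
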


\begin{proof} See \cite[Lemma 3.5]{Rubin-ES}. \end{proof}

The following notion is well defined thanks to Lemma \ref{finite-index-lemma}.

\begin{definition} \label{def Tamagawa} 
Let $v$ be a prime of $F$. The integer 
\[ c_v(A):=\bigl[H^1_\mathrm{ur}(F_v,A):H^1_f(F_v,A)\bigr] \] 
is the \emph{$p$-part of the Tamagawa number of $A$ at $v$}. 
\end{definition}

Recall the finite set $\Sigma$ from \eqref{sigma-set-eq}.

\begin{lemma}\label{good reduction}
If $v\notin\Sigma$, then $c_v(A)=1$. 
\end{lemma}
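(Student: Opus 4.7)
The plan is to show that under the hypothesis $v\notin\Sigma$, the containment $H^1_f(F_v,A)\subset H^1_\mathrm{ur}(F_v,A)$ established just before Lemma \ref{finite-index-lemma} is actually an equality, which is exactly the statement $c_v(A)=1$. The key input is that, by the definition of $\Sigma$ in \eqref{sigma-set-eq}, $v$ is a non-archimedean prime with $v\nmid p$ at which $V$ is unramified, so $I_v$ acts trivially on each of $T$, $V$, and $A$.

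First I would recall that when the coefficients are unramified, the unramified cohomology is computed by the procyclic group $G_v/I_v\simeq\hat{\Z}$, topologically generated by a Frobenius element $\Frob_v$. Concretely, for $M\in\{T,V,A\}$ one has
\[ H^1_\mathrm{ur}(F_v,M)\cong H^1(G_v/I_v,M)\cong M\big/(\Frob_v-1)M. \]
Then I would apply this description to the short exact sequence of unramified $G_v/I_v$-modules
\[ 0\longrightarrow T\longrightarrow V\longrightarrow A\longrightarrow 0, \]
noting that the induced map $V/(\Frob_v-1)V\twoheadrightarrow A/(\Frob_v-1)A$ is surjective (a quotient of a surjective map). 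Equivalently, one can invoke that $\hat\Z$ has cohomological dimension $1$, so $H^2(G_v/I_v,T)=0$, and the long exact cohomology sequence gives surjectivity of $H^1_\mathrm{ur}(F_v,V)\to H^1_\mathrm{ur}(F_v,A)$.

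Finally I would assemble the commutative square with the natural maps $H^1_\mathrm{ur}(F_v,V)\to H^1(F_v,V)$ and $H^1_\mathrm{ur}(F_v,A)\hookrightarrow H^1(F_v,A)$. Since $H^1_f(F_v,V)=H^1_\mathrm{ur}(F_v,V)$ by definition, the image of $H^1_f(F_v,V)$ in $H^1(F_v,A)$ factors through $H^1_\mathrm{ur}(F_v,A)$, and by the previous paragraph it surjects onto $H^1_\mathrm{ur}(F_v,A)$. Therefore
\[ H^1_f(F_v,A)=H^1_\mathrm{ur}(F_v,A), \]
so the index defining $c_v(A)$ in Definition \ref{def Tamagawa} is $1$.

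There is essentially no obstacle here: the statement is a standard consequence of the vanishing of higher cohomology of $\hat{\Z}$ combined with the unramifiedness at $v$. The only point that requires a line of justification is the surjectivity of $H^1_\mathrm{ur}(F_v,V)\to H^1_\mathrm{ur}(F_v,A)$, which, as noted above, follows either from the explicit coinvariants description or equivalently from $\mathrm{cd}(\hat\Z)=1$.
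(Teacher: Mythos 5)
Your proof is correct. Note, though, that the paper does not argue this at all: it simply quotes Rubin's \emph{Euler Systems}, Lemma I.3.5(iv), observing that $T$ is unramified at $v\nmid p$ for $v\notin\Sigma$. What you have written is essentially the standard proof of that cited lemma: identify $H^1_\mathrm{ur}(F_v,M)$ with $H^1(G_v/I_v,M)\cong M/(\Frob_v-1)M$ for $M\in\{T,V,A\}$ unramified (via inflation--restriction, using $M^{I_v}=M$), and then use that $V\twoheadrightarrow A$ induces a surjection on Frobenius coinvariants, so the image of $H^1_f(F_v,V)=H^1_\mathrm{ur}(F_v,V)$ fills up all of $H^1_\mathrm{ur}(F_v,A)$. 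The only point deserving a little extra care is your alternative justification via $\mathrm{cd}(\hat\Z)=1$: the cohomological dimension statement is for torsion coefficients, so for the compact module $T$ one should either pass to the limit over $H^2(\hat\Z,T/p^nT)=0$ or, better, just rely on the coinvariants description (your primary argument), where the surjectivity of $V/(\Frob_v-1)V\to A/(\Frob_v-1)A$ is immediate. With that caveat, your self-contained argument and the paper's citation buy the same thing; yours has the merit of making the mechanism (Frobenius coinvariants and divisibility of the map $V\to A$) explicit.
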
 

\begin{proof} Since $T$ is unramified outside $Np$, this is \cite[Lemma I.3.5, (iv)]{Rubin-ES}. \end{proof}

\subsection{Local conditions at $p$}

Fix an integer $n\geq0$ and let $v\,|\,p$ be a prime of $F_n$.  

\subsubsection{The Bloch--Kato condition} 

Let $\mathbf{B}_\mathrm{cris}$ be Fontaine's crystalline ring of periods. Define 
\begin{align*} 
H^1_f(F_{n,v},V)&:=\ker\Bigl(H^1(F_{n,v},V)\longrightarrow H^1\bigl(F_{n,v},V\otimes_{\Q_p}\mathbf{B}_\mathrm{cris}\bigr)\Bigr)\\
\intertext{and}
H^1_f(F_{n,v},A)&:=\im\Bigl(H^1_f(F_{n,v},V)\longrightarrow H^1(F_{n,v},A)\Bigr),
\end{align*}
where the second arrow is induced by the canonical map $H^1(F_{n,v},V)\rightarrow H^1(F_{n,v},A)$.

\subsubsection{The Greenberg condition} 

As in \S \ref{sec2.1}, let $T^+:=T\cap V^+$, $A^+:=V^+/T^+$, $A^-:=A/A^+$. For $\star\in\{V,A\}$, define
\[ H^1_\mathrm{ord}(F_{n,v},\star):=\ker\Bigl(H^1(F_{n,v},\star)\longrightarrow H^1(I_{n,v},\star^-)\Bigr), \]
the map being induced by restriction and the canonical projection $\star\twoheadrightarrow\star^-$.

\subsubsection{Comparison between local conditions} \label{comparison-subsubsec}

Let $\mathbf{B}_\mathrm{dR}$ be Fontaine's de Rham ring of periods. Define
\[ H^1_g(F_v,V):=\ker\Bigl(H^1(F_v,V)\longrightarrow H^1\bigl(F_v,V\otimes_{\Q_p}\mathbf{B}_\mathrm{dR}\bigr)\Bigr). \]
Since $\mathbf{B}_\mathrm{cris}$ is a subring of $\mathbf{B}_\mathrm{dR}$, there is an inclusion $H^1_f(F_v,V)\subset H^1_g(F_v,V)$. Since $V$ is crystalline at $p$, by \cite[Proposition 12.5.8]{Nek-Selmer}, one has $\mathbf{D}_{\mathrm{cris},F_v}(V^-)=0$ (here, as usual, $\mathbf{D}_{\mathrm{cris},F_v}(W):=(W\otimes_{\Q_p}\mathbf{B}_\mathrm{cris})^{G_{F_v}}$ for a $G_{F_v}$-representation $W$). Then it follows from \cite[Proposition 12.5.7, (2), (ii)]{Nek-Selmer} that 
\begin{equation} \label{H-eq1}
H^1_f(F_v,V)=H^1_g(F_v,V). 
\end{equation} 
Moreover, by a result of Flach (\cite[Lemma 2]{Flach}; see also \cite[Proposition 4.2]{Ochiai}), there is an equality 
\begin{equation} \label{H-eq2}
H^1_g(F_v,V)=H^1_\mathrm{ord}(F_v,V).
\end{equation}
Combining \eqref{H-eq1} and \eqref{H-eq2} then yields 
\begin{equation} \label{ord-fin}
H^1_f(F_v,V)=H^1_\mathrm{ord}(F_v,V).
\end{equation}
Finally, in light of \eqref{ord-fin}, the commutativity of the square
\[ \xymatrix{H^1(F_v,V)\ar[d]\ar[r]&H^1(F_v,A)\ar[d]\\H^1(I_v,V^-)\ar[r]&H^1(I_v,A^-)} \]
shows that $H^1_f(F_v,A)\subset H^1_\mathrm{ord}(F_v,A)$.

\subsection{Selmer groups} \label{selmer-sec}

Now we introduce Selmer groups in the sense of Bloch--Kato and of Greenberg. 

\subsubsection{The Bloch--Kato Selmer group} 

Fix an integer $n\geq0$. The \emph{Bloch--Kato Selmer group of $A$ over $F_n$} is 
\[ \Sel_{\mathrm{BK}}(A/F_{n}):=\ker\Biggl(H^1(F_n,A)\longrightarrow \prod_{v}\frac{H^1(F_{n,v},A)}{H^1_f(F_{n,v},A)}\Biggr), \]
where $v$ varies over all primes of $F_n$ and the arrow is induced by the localization maps. Moreover, define the \emph{Bloch--Kato Selmer group of $A$ over $F_\infty$} as
\[ \Sel_\mathrm{BK}(A/F_\infty):=\varinjlim_n\Sel_{\mathrm{BK}}(A/F_{n})\]
the direct limit being taken with respect to the usual restriction maps in Galois cohomology. 

\subsubsection{The Greeenberg Selmer group} 

Fix an integer $n\geq0$. The \emph{Greenberg Selmer group of $A$ over $F_n$} is
\[ \Sel_\mathrm{Gr}(A/F_n):=\ker\Biggl(H^1(F_n,A)\longrightarrow\prod_{v\nmid p}\frac{H^1(F_{n,v},A)}{H^1_\mathrm{ur}(F_{n,v},A)}\times\prod_{v\mid p}\frac{H^1(F_{n,v},A)}{H^1_\mathrm{ord}(I_{n,v},A)}
\Biggr), \]
where $v$ varies over all primes of $F_n$ and the arrow is induced by the localization maps. Moreover, define the \emph{Greenberg Selmer group of $A$ over $F_\infty$} as
\begin{equation} \label{greenberg-lim-eq}
\Sel_\mathrm{Gr}(A/F_\infty):=\varinjlim_n\Sel_{\mathrm{Gr}}(A/F_{n}), 
\end{equation}
the direct limit being taken again with respect to the restriction maps. 

\begin{remark} 
For Galois representations associated with modular forms, the Selmer group considered in \cite{SU} is the Greenberg Selmer group. In many cases, the \emph{strict} Selmer group, which is defined as the Greenbebrg Selmer group with the difference that the local condition at a prime $v$ above $p$ is taken to be the kernel 
of the map 
\[ H^1(F_n,A)\longrightarrow\frac{H^1(F_{n,v},A)}{H^1_\mathrm{ord}(F_{n,v},A)}, \] 
is equal to the Bloch--Kato Selmer group (see, \emph{e.g.}, \cite[(23)]{Howard-inv}). 
\end{remark}

%

\section{Characteristic power series} \label{characteristic-sec}

As before, let $F_\infty$ be the cyclotomic $\Z_p$-extension of $F$ and put $\Gamma:=\Gal(F_\infty/F)\simeq\Z_p$. 
With notation as in \eqref{greenberg-lim-eq}, set  
\[ S:=\Sel_\mathrm{Gr}(A/F_\infty). \]
Furthermore, let 
\[ X:=S^\vee=\Hom(S,\Q_p/\Z_p) \]
be the Pontryagin dual of $S$. By the topological version of Nakayama's lemma (see, \emph{e.g.}, \cite[p. 226, Corollary]{BH}), the $\Lambda$-module $X$ is finitely generated. 

\subsection{Invariants and coinvariants of Selmer groups}

In what follows, $S^\Gamma$ (respectively, $S_\Gamma$) denotes the $\cO$-module of $\Gamma$-invariants (respectively, $\Gamma$-coinvariants) of $S$.


\begin{proposition}\label{lemma4.1} 
If $\Sel_\mathrm{BK}(A/F)$ is finite, then $S^\Gamma$ is finite and $S$ is a cotorsion $\Lambda$-module. 
\end{proposition}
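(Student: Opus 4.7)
The plan is to bootstrap from finiteness of $\Sel_\mathrm{BK}(A/F)$ to finiteness of $S^\Gamma$ via a control-theorem type argument, and then apply Pontryagin duality together with the structure theorem for finitely generated $\Lambda$-modules.

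The key input is a comparison between $\Sel_\mathrm{BK}(A/F)$ and $S^\Gamma$ with finite kernel and cokernel, essentially a version of Ochiai's control theorem (\cite{Ochiai}) tailored to Assumption \ref{ass}. I would factor this into two steps. First, compare Bloch--Kato with Greenberg at the base field: both sit inside $H^1(F,A)$ and the defining local conditions differ only at the finite set $\Sigma$. At $v\in\Sigma$ with $v\nmid p$ one has $H^1_f(F_v,A)\subset H^1_\mathrm{ur}(F_v,A)$ with finite index $c_v(A)$ by Lemma \ref{finite-index-lemma}, while at $v\mid p$ the inclusion $H^1_f(F_v,A)\subset H^1_\mathrm{ord}(F_v,A)$ from \S\ref{comparison-subsubsec} has finite index since both are cofinitely generated $\cO$-modules of the same corank. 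So finiteness of $\Sel_\mathrm{BK}(A/F)$ forces finiteness of $\Sel_\mathrm{Gr}(A/F)$. Second, run inflation--restriction for $F_\infty/F$ on the Greenberg Selmer group: the kernel of $\mathrm{res}\colon\Sel_\mathrm{Gr}(A/F)\to S^\Gamma$ embeds into $H^1(\Gamma,A^{G_{F_\infty}})$, which is finite because Assumption \ref{ass}\eqref{condition a} forces $H^0(F_v,A)=0$ at the primes $v\mid p$ (the only primes ramified in $F_\infty/F$), so $A^{G_{F_\infty}}$ is finite; the cokernel is bounded by local comparison terms indexed by $v\in\Sigma$, each finite thanks to Assumption \ref{ass}\eqref{condition 7} at $v\mid p$ and to the cofinite $\cO$-generation of $A$ combined with the procyclic nature of $\Gal(F_{\infty,v}/F_v)$ at $v\nmid p$. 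Therefore $S^\Gamma$ is finite.

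Finally, Pontryagin duality interchanges invariants and coinvariants, giving $(S^\Gamma)^\vee\cong X_\Gamma=X/TX$ under the identification $\Lambda\cong\cO[\![T]\!]$ with $\gamma\leftrightarrow T+1$. Since $X$ is already known (\S\ref{characteristic-sec}) to be finitely generated over $\Lambda$ and $X/TX$ is now finite, the structure theorem for finitely generated $\Lambda$-modules forces $X$ to be $\Lambda$-torsion: a nontrivial free summand $\Lambda$ in a pseudo-decomposition of $X$ would contribute an infinite quotient $\Lambda/T\Lambda\cong\cO$ to $X/TX$, a contradiction. Hence $S$ is $\Lambda$-cotorsion. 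I expect the most delicate step to be the cokernel analysis of $\mathrm{res}$ at primes above $p$, where the precise interplay between the Greenberg-ordinary condition along the cyclotomic tower and the technical hypotheses in Assumption \ref{ass}\eqref{condition 7} has to be unwound carefully.
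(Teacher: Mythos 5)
Your overall architecture is sound and genuinely different from the paper's: the paper obtains the finiteness of $S^\Gamma$ by quoting Ochiai's control results (\cite[Theorem 2.4 and Proposition 4.2]{Ochiai}, combined with \eqref{ord-fin}) and then deduces cotorsionness from the Nakayama-type theorem of \cite[p.~229, Theorem]{BH}, whereas you re-prove the control step by hand (comparing $\Sel_\mathrm{BK}(A/F)$ with $\Sel_\mathrm{Gr}(A/F)$ through the finite local indices at $\Sigma$, then running inflation--restriction up the cyclotomic tower) and conclude with the structure theorem. Your final step is correct: $(S^\Gamma)^\vee\simeq X/(\gamma-1)X$, and a finitely generated $\Lambda$-module of positive $\Lambda$-rank cannot have finite $T$-coinvariants. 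The base-field comparison is also fine, granted Lemma \ref{good reduction} outside $\Sigma$ and the finiteness of $\bigl[H^1_\mathrm{ord}(F_v,A):H^1_f(F_v,A)\bigr]$ at $v\mid p$; for the latter your ``same corank'' claim really rests on \eqref{ord-fin} and should be spelled out (the paper later proves the stronger equality $H^1_f(F_v,A)=H^1_\mathrm{ord}(F_v,A)$ in Lemma \ref{greenberg-lemma2.15}, but finiteness suffices here).

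Two of your justifications in the control step, however, are not valid as stated. First, the finiteness of $A^{G_{F_\infty}}$ does not follow from $H^0(F_v,A)=0$ at $v\mid p$ in the way you assert, and ramification in $F_\infty/F$ is irrelevant to this point: a priori $G_{F_\infty}$ could fix a divisible subgroup of $A$. The actual argument (Lemma \ref{greenberg-lemma2.8}) is that condition \eqref{condition a} gives $A[\pi](F)=0$, and since $\Gal(F_n/F)$ is a $p$-group acting on the finite $\F$-vector space $A[\pi](F_n)$, a nonzero invariant over $F_n$ would descend to a nonzero invariant over $F$; this yields $A(F_\infty)=0$. Second, at $v\nmid p$ you attribute the finiteness of the local terms controlling $\coker(\mathrm{res})$ to ``cofinite $\cO$-generation of $A$ plus procyclicity of $\Gal(F_{\infty,w}/F_v)$''; that principle is false in general, since a procyclic group acting trivially on $K/\cO$ has infinite $H^1$ (equivalently, infinite coinvariants). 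The correct reason here is that $F_{\infty,w}/F_v$ is unramified, so $I_v=I_{\infty,w}$ and the kernel of $H^1(F_v,A)\big/H^1_\mathrm{ur}(F_v,A)\rightarrow H^1(F_{\infty,w},A)\big/H^1_\mathrm{ur}(F_{\infty,w},A)$ vanishes, exactly as in the proof of Lemma \ref{greenberg-lemma2.14}; if instead you wanted to bound the Frobenius-coinvariants of $A(F_{\infty,w})$, you would need the finiteness of $H^0(F_v,A)$, which is not automatic and is proved in Lemma \ref{finite} using the pairing from \eqref{condition 4}. With these two repairs your argument closes up and yields the proposition without invoking \cite{Ochiai}.
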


Recall that, by definition, $S$ is $\Lambda$-cotorsion if $X$ is $\Lambda$-torsion.

\begin{proof} By \cite[Theorem 2.4]{Ochiai}, which can be applied in our setting using \cite[Lemma 12.5.7]{Nek-Selmer}, $\Sel_\mathrm{BK}(A/F)$ is finite if and only if $\Sel_\mathrm{BK}(A/F_\infty)^\Gamma$ is. On the other hand, by \cite[Proposition 4.2]{Ochiai} combined with \eqref{ord-fin}, $\Sel_\mathrm{BK}(A/F_\infty)^\Gamma$ is finite if and only if $S^\Gamma$ is. Now fix a topological generator $\gamma$ of $\Gamma$. By Pontryagin duality, the finiteness of $S^\Gamma$ is equivalent to the finiteness of $X/(\gamma-1)X$. By \cite[p. 229, Theorem]{BH}, it follows that $X$ is $\Lambda$-torsion, which means that $S$ is a cotorsion $\Lambda$-module. \end{proof}

\begin{remark}
The proof of Proposition \ref{lemma4.1} actually shows that the finiteness of $\Sel_\mathrm{BK}(A/F)$ is equivalent to the finiteness of $S^\Gamma$. Moreover, by \cite[Theorem 3]{Flach} and \cite[Proposition 4.1, (1)]{Ochiai}, for every $n\geq0$ the finiteness of $\Sel_\mathrm{BK}(A/F_n)$ is equivalent to the finiteness of $\Sel_\mathrm{Gr}(A/F_n)$. 
\end{remark}

In particular, when $\Sel_\mathrm{BK}(A/F)$ is finite we can consider the characteristic power series $\mathcal F\in\cO[\![T]\!]$ of $X$.  

\begin{proposition} \label{lemma5.11}
If $S^\Gamma$ is finite, then $S_\Gamma$ is finite, $\mathcal{F}(0)\neq 0$ and $\#\bigl(\cO/\mathcal{F}(0)\cO\bigr)=\#S^\Gamma\big/\#S_\Gamma$.  
\end{proposition}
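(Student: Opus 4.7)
The plan is to translate everything about $S$ into statements about $X$ via Pontryagin duality, and then prove the identity by structure-theory plus an Euler-characteristic argument.

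First, apply Pontryagin duality to the tautological four-term exact sequence
\[ 0\longrightarrow S^\Gamma\longrightarrow S\xrightarrow{\gamma-1} S\longrightarrow S_\Gamma\longrightarrow 0. \]
Since $\gamma-1$ corresponds to $T$ under the identification $\Lambda=\cO[\![T]\!]$, I obtain the dual exact sequence
\[ 0\longrightarrow(S_\Gamma)^\vee\longrightarrow X\xrightarrow{T} X\longrightarrow(S^\Gamma)^\vee\longrightarrow 0, \]
giving canonical isomorphisms $X^\Gamma=X[T]\cong(S_\Gamma)^\vee$ and $X_\Gamma=X/TX\cong(S^\Gamma)^\vee$. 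Thus finiteness of $S^\Gamma$ is equivalent to finiteness of $X/TX$, which by Nakayama forces $X$ to be $\Lambda$-torsion (as already noted in the proof of Proposition \ref{lemma4.1}), so $\mathcal F$ is defined.

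Next, invoke the structure theorem for finitely generated torsion $\Lambda$-modules: there exists a pseudo-isomorphism $X\to E:=\bigoplus_{i}\Lambda/(g_i)$ where each $g_i$ is either a power of $\pi$ or a power of an irreducible distinguished polynomial, and $\mathcal F=u\cdot\prod_i g_i$ for some unit $u\in\Lambda^\times$. I will compute the $T$-kernel and $T$-cokernel on each elementary factor. For a single $\Lambda/(g)$, if $g(0)\neq 0$ then $\gcd(T,g)=1$, so multiplication by $T$ is injective and $(\Lambda/(g))/T(\Lambda/(g))=\Lambda/(T,g)\cong\cO/g(0)\cO$, which is finite. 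If instead $g(0)=0$, then both the $T$-kernel and the $T$-cokernel of $\Lambda/(g)$ are infinite. Since $X/TX$ is assumed finite, the pseudo-isomorphism $X\to E$ (whose kernel and cokernel are finite) forces $E/TE$ to be finite, hence $g_i(0)\neq 0$ for every $i$; this immediately gives $\mathcal F(0)\neq 0$ and also shows that $E[T]=0$, so $X[T]$ is finite (being contained in the finite kernel of $X\to E$). Equivalently, $S_\Gamma$ is finite.

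Now I quantify the ratio by an Euler-characteristic argument. For a finitely generated $\Lambda$-module $M$ on which multiplication by $T$ has finite kernel and finite cokernel, set
\[ \chi(M):=\log_p\#(M/TM)-\log_p\#M[T]. \]
This is additive on short exact sequences (by the snake lemma), and it vanishes on any finite module, because an endomorphism of a finite $\cO$-module has kernel and cokernel of the same cardinality. Applying this to the four-term sequence arising from the pseudo-isomorphism $X\to E$ (whose kernel and cokernel are finite), I get $\chi(X)=\chi(E)=\sum_i\chi(\Lambda/(g_i))$. By the elementary computation above,
\[ \chi(\Lambda/(g_i))=\log_p\#\bigl(\cO/g_i(0)\cO\bigr), \]
so summing and using $\mathcal F(0)=u(0)\cdot\prod_i g_i(0)$ with $u(0)\in\cO^\times$,
\[ \frac{\#(X/TX)}{\#X[T]}=\prod_i\#\bigl(\cO/g_i(0)\cO\bigr)=\#\bigl(\cO/\mathcal F(0)\cO\bigr). \]
Translating back via the duality isomorphisms from the first paragraph yields $\#S^\Gamma/\#S_\Gamma=\#(\cO/\mathcal F(0)\cO)$, as required.

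The only delicate point, and the main obstacle to writing this cleanly, is justifying that the pseudo-isomorphism step does not introduce spurious contributions; this is exactly where the vanishing of $\chi$ on finite modules (i.e.\ the equality of the orders of the kernel and cokernel of any endomorphism of a finite $\cO$-module) is essential.
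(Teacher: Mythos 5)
Your argument is correct and is essentially the structure-theoretic Euler-characteristic computation that the paper cites from Greenberg's Lemma~4.2 in the Cetraro notes, carried out for general $\cO$ as the paper indicates. The key steps you use --- Pontryagin duality converting $S^\Gamma$, $S_\Gamma$ into $X_\Gamma$, $X^\Gamma$; reduction to elementary modules $\Lambda/(g_i)$ via a pseudo-isomorphism; additivity and vanishing on finite modules of the multiplicative Euler characteristic for $T$ --- are precisely the ingredients of the cited proof.
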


\begin{proof} Proceed as in the proof of \cite[Lemma 4.2]{greenberg-cetraro}, which only deals with the $\cO=\Z_p$ case but carries over to our more general setting in a straightforward way. \end{proof}

\begin{remark} \label{euler-rem}
Since $\Gamma$ has cohomological dimension $1$ and $S$ is a direct limit of torsion groups, $H^2(\Gamma,S)=0$; moreover, since $\Gamma\simeq\Z_p$, we have $H^1(\Gamma,S)=S_\Gamma$. It follows that 
\[ \#S^\Gamma\big/\#S_\Gamma=\#H^0(\Gamma,S)\big/\#H^1(\Gamma,S) \] 
is the Euler characteristic of $M$. 
\end{remark}

From now on we work under 

\begin{assumption} \label{ass2}
The group $\Sel_\mathrm{BK}(A/F)$ is finite.
\end{assumption}
 
In light of Assumption \ref{ass2}, it follows from Propositions \ref{lemma4.1} and \ref{lemma5.11}
that 
\begin{equation} \label{1 and 2}
\text{$S$ is $\Lambda$-cotorsion, $S^\Gamma$ and $S_\Gamma$ are finite, $\mathcal{F}(0)\neq 0$}
\end{equation}
and
\begin{equation} \label{greenberg-eq2.2}
\#\bigl(\cO/\mathcal{F}(0)\cdot\cO\bigr)=\frac{\# S^\Gamma}{\# S_\Gamma}.
\end{equation}
In the following sections we shall study the quotient on the right hand side of \eqref{greenberg-eq2.2}. 

\section{Relating $\Sel_\mathrm{BK}(A/F)$ and $S^\Gamma$} 
%
Recall that Assumption \ref{ass2} is in force. As remarked in \eqref{1 and 2}, $S^\Gamma$ is finite as well. There is a natural map of finite groups 
\begin{equation} \label{s-map-eq}
s:\Sel_\mathrm{BK}(A/F)\longrightarrow S^\Gamma.
\end{equation}
 Then
\begin{equation}\label{greenberg-eq1}
\# S^\Gamma = \frac{\#\Sel_\mathrm{BK}(A/F)\cdot\# \coker(s)}{\#\ker(s)}.\end{equation}
Our next goal is to study the orders of the kernel and of the cokernel of $s$. 

\subsection{The map $r$} \label{r-subsec}

Set
\[ \mathcal{P}_\mathrm{BK}(A/F):=\prod_{v}\frac{H^1(F_v,A)}{H^1_f(F_v,A)} \]
and for every integer $n\geq0$ set also 
\[ \mathcal{P}_\mathrm{Gr}(A/F_n):=\prod_{v\nmid p}\frac{H^1(F_{n,v},A)}{H^1_\mathrm{ur}(F_{n,v},A)}\times\prod_{v\mid p}\frac{H^1(F_{n,v},A)}{H^1_\mathrm{ord}(F_{n,v},A)}, \]
where products are taken over all primes of $F$ and of $F_n$, respectively. With this notation available, we can write 
\begin{align*}
\Sel_\mathrm{BK}(A/F)&=\ker\Bigl(H^1(F,A)\longrightarrow \mathcal{P}_\mathrm{BK}(A/F)\Bigr)\\
\intertext{and}
\Sel_\mathrm{Gr}(A/F_n)&=\ker\Bigl(H^1(F_n,A)\longrightarrow \mathcal{P}_\mathrm{Gr}(A/F_n)\Bigr).
\end{align*}
Finally, we define 
\begin{equation} \label{greenberg-infinity-eq}
\mathcal{P}_\mathrm{Gr}(A/F_\infty):=\varinjlim_n\mathcal P_\mathrm{Gr}(A/F_n)=\prod_{v\nmid p}\frac{H^1(F_{\infty,v},A)}{H^1_\mathrm{ur}(F_{\infty,v},A)}\times\prod_{v\mid p}\frac{H^1(F_{\infty,v},A)}{H^1_\mathrm{ord}(F_{\infty,v},A)}
\end{equation} 
where the direct limit is taken with respect to the restriction maps; we also note that if a prime of $F_n$ splits completely in $F_m$ for $m\geq n$, then the corresponding map is the diagonal embedding. By definition, there is an equality 
\[ \Sel_\mathrm{Gr}(A/F_\infty)=\ker\Bigl(H^1(F_\infty,A)\longrightarrow \mathcal{P}_\mathrm{Gr}(A/F_\infty)\Bigr), \]
By construction (see \S \ref{selmer-sec} and \eqref{greenberg-infinity-eq}), there are natural maps $\mathcal{P}_\mathrm{BK}(A/F)\rightarrow\mathcal{P}_\mathrm{Gr}(A/F)$ and $\mathcal{P}_\mathrm{Gr}(A/F)\rightarrow\mathcal{P}_\mathrm{Gr}(A/F_\infty)$, which produce a map
\[ r:\mathcal{P}_\mathrm{BK}(A/F)\longrightarrow\mathcal{P}_\mathrm{Gr}(A/F_\infty). \] 
For every prime $v$ of $F$ let $w$ be a prime of $F_\infty$ above $v$. The map $r$ is given by a product 
$r=\prod_{v,w}r_{v,w}$, where 
\[ r_{v,w}:\frac{H^1(F_v,A)}{H^1_f(F_v,A)}\longrightarrow\frac{H^1(F_{\infty,w},A)}{H^1_\mathrm{ur}(F_{\infty,w},A)} \] 
if $v\nmid p$, while
\[ r_{v,w}:\frac{H^1(F_v,A)}{H^1_f(F_v,A)}\longrightarrow\frac{H^1(F_{\infty,w},A)}{H^1_\mathrm{ord}(F_{\infty,w},A)} \]
if $v\,|\,p$. Our next goal is to study the kernel of $r$ prime by prime.

\subsubsection{The map $r_{v,w}$ for $v\nmid p$} 

Assume that $v\nmid p$. Let $\Sigma$ be the finite set of primes of $F$ introduced in \eqref{sigma-set-eq}. We will distinguish two cases: $v\notin\Sigma$ and $v\in\Sigma$. 

\begin{lemma} \label{greenberg-lemma2.6} 
If $v\notin\Sigma$, then $r_{v,w}$ is injective. 
\end{lemma}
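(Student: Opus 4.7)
The plan is to exploit two facts simultaneously: that $V$ is unramified at $v$ (since $v \notin \Sigma$) and that the cyclotomic $\Z_p$-extension is unramified away from $p$ and infinity, so that the local extension $F_{\infty,w}/F_v$ is unramified. Together these should make the inertia groups at $v$ and $w$ coincide, and the result will drop out of a commutative diagram comparing the unramified kernel maps.

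First I would observe that by Lemma \ref{good reduction} we have $c_v(A)=1$, i.e.\ $H^1_f(F_v,A)=H^1_{\mathrm{ur}}(F_v,A)$, so the source of $r_{v,w}$ is already $H^1(F_v,A)/H^1_{\mathrm{ur}}(F_v,A)$. Next, since $v\nmid p$ the extension $F_\infty/F$ is unramified at $v$, so $F_{\infty,w}\subset F_v^{\mathrm{unr}}$; this forces $F_{\infty,w}^{\mathrm{unr}}=F_v^{\mathrm{unr}}$, hence $I_{\infty,w}=I_v$ inside $G_v$.

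With this identification in hand, I would consider the commutative square
\[
\xymatrix{
H^1(F_v,A)\ar[r]\ar[d] & H^1(I_v,A)\ar@{=}[d]\\
H^1(F_{\infty,w},A)\ar[r] & H^1(I_{\infty,w},A),
}
\]
whose commutativity is transitivity of restriction together with $I_{\infty,w}=I_v$. A class $c\in H^1(F_v,A)$ whose image in $H^1(F_v,A)/H^1_f(F_v,A)$ lies in $\ker(r_{v,w})$ maps to zero in $H^1(F_{\infty,w},A)/H^1_{\mathrm{ur}}(F_{\infty,w},A)$, hence to zero in $H^1(I_{\infty,w},A)$, and therefore, by commutativity, to zero in $H^1(I_v,A)$. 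This means $c\in H^1_{\mathrm{ur}}(F_v,A)=H^1_f(F_v,A)$, so its class is trivial.

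There is no serious obstacle here: the only point that requires care is verifying $I_{\infty,w}=I_v$, which rests on the standard fact that the cyclotomic $\Z_p$-extension is unramified at primes not above $p$. Everything else is formal from the commutativity of restriction and the coincidence $H^1_f=H^1_{\mathrm{ur}}$ at good primes that was already recorded in Lemma \ref{good reduction}.
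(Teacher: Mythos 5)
Your proof is correct and follows essentially the same approach as the paper: both reduce to the unramified condition via Lemma \ref{good reduction}, then exploit the identity $I_v = I_{\infty,w}$ (valid because the cyclotomic $\Z_p$-extension is unramified away from $p$) to conclude that the induced map on inertia cohomology is injective. The only cosmetic difference is that you phrase the conclusion via a commutative square, whereas the paper factors through $\Hom(I_v,A)\to\Hom(I_{\infty,w},A)$, but these are the same argument.
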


\begin{proof} By Lemma \ref{good reduction}, the kernel of $r_{v,w}$ is the kernel of the restriction map 
\begin{equation} \label{eq r}
\frac{H^1(F_v,A)}{H^1_\mathrm{ur}(F_v,A)}\longrightarrow 
\frac{H^1(F_{\infty,w},A)}{H^1_\mathrm{ur}(F_{\infty,w},A)}.
\end{equation}
With self-explaining notation, there are injections
\[ \frac{H^1(F_v,A)}{H^1_\mathrm{ur}(F_v,A)}\longmono H^1(I_v,A)=\Hom(I_v,A) \]
and
\[ \frac{H^1(F_{\infty,w},A)}{H^1_\mathrm{ur}(F_{\infty,w},A)}\longmono H^1(I_{\infty,w},A)=\Hom(I_{\infty,w},A), \]
where the equalities are a consequence of the fact that $A$ is unramified at $v$. Therefore, the kernel 
of \eqref{eq r} is contained in the kernel of the natural map 
\begin{equation} \label{inertia-hom-eq}
\Hom(I_v,A)\longrightarrow \Hom(I_{\infty,w},A).
\end{equation} 
Since $v$ is unramified, $F_{\infty,w}$ is a $\Z_p$-subextension of the unramified $\Z_p$-extension of $F_v$. Thus, $I_v=I_{\infty,w}$ and \eqref{inertia-hom-eq} is injective, which concludes the proof. \end{proof}

It follows from Lemma \ref{greenberg-lemma2.6} that $\ker(r)$ is the subgroup of $\mathcal{P}_\mathrm{BK}(A/F)$ consisting of elements $s$ such that $r_{v,w}(s)=0$ for all $w\,|\,v$ with $v\in\Sigma$. Thus, upon setting  
\[ \mathcal{P}_{\mathrm{BK}}^\Sigma(A/F):=\prod_{v\in\Sigma}\frac{H^1(F_v,A)}{H^1_f(F_v,A)} \]
and
\[ \mathcal{P}^\Sigma_\mathrm{Gr}(A/F_\infty):=\prod_{\substack{w\mid v\\v\in\Sigma,\,v\,\nmid\,p}}\frac{H^1(F_{\infty,w},A)}{H^1_\mathrm{ur}(F_{\infty,w},A)}\times\prod_{w\mid v\mid p}\frac{H^1(F_{\infty,w},A)}{H^1_\mathrm{ord}(F_{\infty,w},A)}, \]
it follows that $\ker(r)\subset\mathcal{P}_\mathrm{BK}^\Sigma(A/F)$; more precisely, $\ker(r)$ coincides with the kernel of the restriction map 
\begin{equation} \label{g-map-eq}
g:\mathcal{P}_\mathrm{BK}^\Sigma(A/F)\longrightarrow\mathcal{P}^\Sigma_\mathrm{Gr}(A/F_\infty)^\Gamma. 
\end{equation} 
We conclude that 
\begin{equation} \label{greenberg-eq2.8}
\#\ker(r)=\prod_{\substack{w\mid v\\v\in\Sigma}}\#\ker(r_{v,w}).
\end{equation}

\begin{lemma} \label{greenberg-lemma2.14} 
If $v\in\Sigma$ and $v\nmid p$, then $\#\ker(r_{v,w})=c_v(A)$. 
\end{lemma}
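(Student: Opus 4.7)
The plan is to adapt the argument used in the proof of Lemma \ref{greenberg-lemma2.6}, the only subtlety being that now the source of $r_{v,w}$ has the (possibly smaller) subgroup $H^1_f(F_v,A)$ in the denominator, rather than $H^1_\mathrm{ur}(F_v,A)$. The key point remains that, since $v\nmid p$, the cyclotomic $\Z_p$-extension is unramified at $v$, so $F_{\infty,w}\subset F_v^\mathrm{ur}$ and the inertia subgroups coincide: $I_v=I_{\infty,w}$. As in Lemma \ref{greenberg-lemma2.6}, the inflation--restriction sequence then yields injections of $H^1(F_v,A)/H^1_\mathrm{ur}(F_v,A)$ and $H^1(F_{\infty,w},A)/H^1_\mathrm{ur}(F_{\infty,w},A)$ into $\Hom(I_v,A)=\Hom(I_{\infty,w},A)$ which are compatible with the restriction map, and this map between the Hom groups is the identity. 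Consequently, the natural restriction map
\[ \frac{H^1(F_v,A)}{H^1_\mathrm{ur}(F_v,A)}\longrightarrow \frac{H^1(F_{\infty,w},A)}{H^1_\mathrm{ur}(F_{\infty,w},A)} \]
is injective. Note that, unlike the case treated in Lemma \ref{greenberg-lemma2.6}, we do not use that $A$ is unramified at $v$; injectivity here comes purely from the equality of inertia groups.

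Next, I would factor $r_{v,w}$ as
\[ \frac{H^1(F_v,A)}{H^1_f(F_v,A)}\,\twoheadrightarrow\,\frac{H^1(F_v,A)}{H^1_\mathrm{ur}(F_v,A)}\,\hookrightarrow\,\frac{H^1(F_{\infty,w},A)}{H^1_\mathrm{ur}(F_{\infty,w},A)}, \]
where the first arrow is the canonical surjection (well defined because $H^1_f(F_v,A)\subset H^1_\mathrm{ur}(F_v,A)$, as observed in \S \ref{sec-Selmer}) and the second is the injection established above. Therefore
\[ \ker(r_{v,w})=\frac{H^1_\mathrm{ur}(F_v,A)}{H^1_f(F_v,A)}, \]
whose cardinality is precisely $c_v(A)$ by Definition \ref{def Tamagawa}. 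The finiteness guaranteed by Lemma \ref{finite-index-lemma} ensures that $c_v(A)$ is a well-defined positive integer, and no further analysis is required. There is no real obstacle in this argument: the entire content is the compatibility of the inflation--restriction sequences at $F_v$ and $F_{\infty,w}$ with the coincidence of inertia groups, exactly as in Lemma \ref{greenberg-lemma2.6}.
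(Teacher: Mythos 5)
Your proof is correct and follows essentially the same route as the paper: factor $r_{v,w}$ through $H^1(F_v,A)/H^1_\mathrm{ur}(F_v,A)$, show the second map is injective because $F_{\infty,w}/F_v$ is unramified so $I_v=I_{\infty,w}$, and conclude that $\ker(r_{v,w})=H^1_\mathrm{ur}(F_v,A)/H^1_f(F_v,A)$ has order $c_v(A)$ by definition. One small correction: since $v\in\Sigma$ the module $A$ may be ramified at $v$, so the quotients inject into $H^1(I_v,A)$ and $H^1(I_{\infty,w},A)$ rather than into $\Hom(I_v,A)=\Hom(I_{\infty,w},A)$; this does not affect your argument, which only uses that the map between these inertia cohomology groups is the identity.
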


\begin{proof} The map $r_{v,w}$ splits as a composition
\[ r_{v,w}:H^1(F_v ,A)\big/H^1_f(F_v,A)\longrightarrow H^1(F_v ,A)\big/H^1_\mathrm{ur}(F_v,A)\longrightarrow H^1(F_{\infty,w},A)\big/H^1_\mathrm{ur}(F_{\infty,w},A). \]
Thus, since $\#\bigl(H^1_\mathrm{ur}(F_v ,A)/H^1_f(F_v ,A)\bigr)=c_v(A)$, it suffices to show that the map
\begin{equation} \label{greenberg-2.6/1}
H^1(F_v ,A)\big/H^1_\mathrm{ur}(F_v,A)\longrightarrow H^1(F_{\infty,w},A)\big/H^1_\mathrm{ur}(F_{\infty,w},A)
\end{equation}
is injective. Let $F_v^\mathrm{ur}$ be the maximal unramified extension of $F_v$ and set $G_v^\mathrm{ur}:=\Gal(F_v ^\mathrm{ur}/F_v)$. There is a commutative diagram 
\begin{equation} \label{inertia-diagram-eq}
\xymatrix{
0\ar[r]& 
H^1_\mathrm{ur}(F_v ,A)\ar[r]\ar[d]&
H^1(F_v ,A)\ar[r]\ar[d]&
H^1(I_v ,A)^{G_v^\mathrm{ur}}\ar[r]\ar[d]&
0\\
0\ar[r]&
H^1_\mathrm{ur}(F_{\infty,w},A)\ar[r]&
H^1(F_{\infty,w},A)\ar[r]&
H^1(I_{\infty,w},A)
}
\end{equation}
with exact rows. Notice that, since $G_v^\mathrm{ur}\simeq\prod_\ell\Z_\ell$, the surjectivity of the right non-trivial map in the top row of \eqref{inertia-diagram-eq} stems from the vanishing of $H^2(G_v^\mathrm{ur},A)$ (\cite[Proposition 1.4.10, (2)]{Rubin}). It follows that the kernel of the map in \eqref{greenberg-2.6/1} can be identified with the kernel of the rightmost vertical map in \eqref{inertia-diagram-eq}, which is isomorphic (by the inflation-restriction exact sequence) to $H^1\bigl(I_v /I_{\infty,w},A^{I_\infty,w}\bigr)^{G_v^\mathrm{ur}}$. Since $F_{\infty,w}/F_v $ is unramified if $v\nmid p$, we have $I_v =I_{\infty,w}$, so $H^1\bigl(I_v /I_{\infty,w},A^{I_\infty,w}\bigr)=0$. It follows that \eqref{greenberg-2.6/1} is injective, which completes the proof. \end{proof}

\subsubsection{The map $r_{v,w}$ for $v\,|\,p$}  

Now we study the local conditions at a prime $v\,|\,p$. Recall that, by \eqref{ord-fin}, we have $H^1_\mathrm{ord}(F_v,V)=H^1_f(F_v,V)$. Moreover, as explained in \S \ref{comparison-subsubsec}, there is an inclusion $H^1_f(F_v,A)\subset H^1_\mathrm{ord}(F_v,A)$. 

%

The lemma below, whose proof uses in a crucial way the triviality of local invariants from part \eqref{condition a} in Assumption \ref{ass}, forces the terms corresponding to $\tilde{E}_v(\F_v)$ in \eqref{greenbergintro} to be trivial for all primes $v$ of $F$ above $p$.

\begin{lemma} \label{greenberg-lemma2.15}  
If $v\,|\,p$, then $r_{v,w}$ is injective. 
\end{lemma}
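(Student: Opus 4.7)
Since $H^1_f(F_v,A)\subset H^1_\mathrm{ord}(F_v,A)$ by \S\ref{comparison-subsubsec}, the map $r_{v,w}$ factors through $H^1(F_v,A)/H^1_\mathrm{ord}(F_v,A)$; thus the injectivity of $r_{v,w}$ is equivalent to the conjunction of (i) the equality $H^1_\mathrm{ord}(F_v,A)=H^1_f(F_v,A)$, and (ii) the injectivity of the induced restriction
\[
H^1(F_v,A)/H^1_\mathrm{ord}(F_v,A)\longrightarrow H^1(F_{\infty,w},A)/H^1_\mathrm{ord}(F_{\infty,w},A).
\]
I will establish both statements separately.

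For (ii), suppose $\xi\in H^1(F_v,A)$ restricts to an element of $H^1_\mathrm{ord}(F_{\infty,w},A)$; by functoriality, the image of $\xi$ in $H^1(I_v,A^-)$ lies in the kernel of the restriction to $H^1(I_{\infty,w},A^-)$, which, by the inflation--restriction sequence, equals $H^1\bigl(I_v/I_{\infty,w},(A^-)^{I_{\infty,w}}\bigr)$. Since $F_\infty/F$ is the cyclotomic $\Z_p$-extension and $v\mid p$, the quotient $I_v/I_{\infty,w}$ is a quotient of $\Gamma\cong\Z_p$, and the restriction of $\chi_\cyc$ to $I_{\infty,w}$ retains a non-trivial tame part because $p$ is odd. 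Since by hypothesis $I_v$ acts on $V^-$ through $\chi_\cyc$, this forces $(V^-)^{I_{\infty,w}}=0$ and, combined with condition \eqref{condition b} of Assumption \ref{ass}, yields $(A^-)^{I_{\infty,w}}=0$; the kernel thus vanishes, so $\xi\in H^1_\mathrm{ord}(F_v,A)$.

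For (i), the crucial ingredient is condition \eqref{condition a}: through the isomorphism $T^\vee(1)\simeq A$ furnished by \eqref{condition 4} and local Tate duality, the vanishing $H^0(F_v,A)=0$ translates into $H^2(F_v,T)=0$, so the long exact sequence attached to $0\to T\to V\to A\to0$ produces a surjection $H^1(F_v,V)\twoheadrightarrow H^1(F_v,A)$. Given $\xi\in H^1_\mathrm{ord}(F_v,A)$, lift it to $\tilde\xi\in H^1(F_v,V)$; the image of $\tilde\xi$ in $H^1(I_v,V^-)$ maps to $0$ in $H^1(I_v,A^-)$ and hence, by the long exact sequence of $I_v$-cohomology attached to $0\to T^-\to V^-\to A^-\to0$, lies in the image of $H^1(I_v,T^-)$. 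A diagram chase exploiting the divisibility of $H^0(I_v,A^-)$ granted by condition \eqref{condition b}, together with the freedom to alter $\tilde\xi$ by classes in $H^1(F_v,T)$ (which leave its projection $\xi$ unchanged), then produces a new lift lying in $H^1_\mathrm{ord}(F_v,V)=H^1_f(F_v,V)$ (see \eqref{ord-fin}), and therefore $\xi\in H^1_f(F_v,A)$.

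The principal obstacle is claim (i): whereas the analogous equality at the level of the $p$-adic representation $V$ is the known identity \eqref{ord-fin}, pushing it down to the discrete divisible module $A$ is delicate and requires, simultaneously, the vanishing $H^0(F_v,A)=0$ from condition \eqref{condition a} (to ensure surjectivity of $H^1(F_v,V)\to H^1(F_v,A)$) and the divisibility of $H^0(I_v,A^-)$ from condition \eqref{condition b} (to control the obstruction arising in $H^1(I_v,T^-)$).
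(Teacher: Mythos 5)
Your overall strategy coincides with the paper's: factor $r_{v,w}$ through $H^1(F_v,A)/H^1_\mathrm{ord}(F_v,A)$, prove the second (restriction) map injective via inflation--restriction and the vanishing of $(A^-)^{I_{\infty,w}}$, and reduce the first map to the equality $H^1_f(F_v,A)=H^1_\mathrm{ord}(F_v,A)$. The problem is that your step (i), which is exactly where the real content of the lemma lies, is not actually proved: you assert that ``a diagram chase exploiting the divisibility of $H^0(I_v,A^-)$ \dots together with the freedom to alter $\tilde\xi$ by classes in $H^1(F_v,T)$'' produces a lift in $H^1_\mathrm{ord}(F_v,V)$, but this chase is precisely the non-formal point. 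Altering $\tilde\xi$ by the image of $t\in H^1(F_v,T)$ changes its image in $H^1(I_v,V^-)$ only by elements of $\im\bigl(H^1(F_v,T)\to H^1(I_v,V^-)\bigr)$, so to kill the class $\bar\beta$ (which you only know lies in $\im\bigl(H^1(I_v,T^-)\to H^1(I_v,V^-)\bigr)$) you must show that the corresponding element of $H^1(I_v,T^-)^{G_v^\mathrm{ur}}$ is hit by $H^1(F_v,T)$. This surjectivity is not automatic and is not a consequence of \eqref{condition a} and \eqref{condition b} alone: in the paper it is obtained by factoring $h:H^1(F_v,T)\to H^1(F_v,T^-)\to H^1(I_v,T^-)^{G_v^\mathrm{ur}}$ and killing the two cokernels, which uses $H^2(F_v,T^+)\simeq H^0\bigl(F_v,(T^-)^\vee(1)\bigr)^\vee=0$, i.e.\ condition \eqref{condition c} of Assumption \ref{ass}, together with $(T^-)^{I_v}=0$; an analogous surjectivity of $H^1(F_v,V)\to H^1(I_v,V^-)^{G_v^\mathrm{ur}}$ (again via \eqref{condition c}) is needed for the snake-lemma identification of cokernels, and condition \eqref{condition b} only enters to dispose of the torsion of $H^1(I_v,T^-)$. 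Your sketch never invokes \eqref{condition c}, and without it the argument cannot be completed, so there is a genuine gap rather than a mere compression.

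A smaller point: in step (ii) you deduce $(A^-)^{I_{\infty,w}}=0$ from $(V^-)^{I_{\infty,w}}=0$ ``combined with condition \eqref{condition b}'', but that condition concerns the cotorsion quotient of $H^0(I_v,A^-)$, not of $H^0(I_{\infty,w},A^-)$, so the inference as stated does not follow from the hypothesis you cite; the intended input here is the vanishing of the $I_{\infty,w}$-invariants of $A^-$ as in part (1) of Remark \ref{remark ass}. Also, your claim that the restriction of $\chi_\cyc$ to $I_{\infty,w}$ keeps a non-trivial tame part is sensitive to whether $F_v$ already contains $\Bmu_p$, so it should not be presented as automatic for a general number field $F$.
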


\begin{proof} To begin with, note that the map $r_{v,w}$ can be written as the composition  
\begin{equation} \label{composition}
r_{v,w}:\frac{H^1(F_v,A)}{H^1_f(F_v,A)}\longrightarrow
\frac{H^1(F_v,A)}{H^1_\mathrm{ord}(F_v,A)}\longrightarrow 
\frac{H^1(F_{\infty,w},A)}{H^1_\mathrm{ord}(F_{\infty,w},A)},
\end{equation}
where the first arrow is induced by the identity map of $H^1(F_v,A)$ and the second by the obvious (restriction) map $H^1(F_v,A)\rightarrow H^1(F_{\infty,w},A)$. Our strategy is to prove that both maps appearing in \eqref{composition} are injective. As we shall see, the proof of the injectivity of the second map is, thanks to Assumption \ref{ass}, straightforward, while dealing with the first map is much more delicate.

We first take care of the second map in \eqref{composition}. Let us consider the commutative diagram with exact rows
\[ \xymatrix{
0\ar[r]& 
H^1_\mathrm{ord}(F_v,A)\ar[r]\ar[d]&
H^1(F_v,A)\ar[r]\ar[d]&
H^1(I_v,A^-)\ar[d]
\\
0\ar[r]&
H^1_\mathrm{ord}(F_{\infty,w},A)\ar[r]&
H^1(F_{\infty,w},A)\ar[r]&
H^1(I_{\infty,w},A^-).
}
\]
The kernel of the rightmost vertical arrow is isomorphic (by the inflation-restriction exact sequence) to 
$H^1\bigl(I_p/I_{\infty,w},(A^-)^{I_\infty,w}\bigr)$. By Assumption \ref{ass}, we know that $(A^-)^{I_\infty,w}=0$ (\emph{cf.} part (1) of Remark \ref{remark ass}), so the second arrow in \eqref{composition} is injective. It follows that $\ker(r_{v,w})$ is just the kernel of the first map in \eqref{composition}. We tackle the study of this map by adapting arguments from \cite{Flach} and \cite[Proposition 4.2]{Ochiai}. Of course, proving that the above-mentioned map is injective amounts to showing that $H^1_f(F_v,A)=H^1_\mathrm{ord}(F_v,A)$. 

Let us consider the commutative diagram with exact rows
\[ \xymatrix{
&&
\displaystyle{\frac{H^1(F_v,T)}{H^1(F_v,T)_\mathrm{tors}}}\ar[d]\ar[r]^-a&
\biggl(\displaystyle{\frac{H^1(I_v,T)}{H^1(I_v,T)_\mathrm{tors}}}\biggr)^{\!\!G_v^\mathrm{ur}}\ar[d]\\
0\ar[r]&
H^1_f(F_v,V)=H^1_\mathrm{ord}(F_v,V)\ar[r]\ar[d]\ar@/^3.0pc/[dd]^-{g}&
H^1(F_v,V)\ar[r]^-{b}\ar[dd]&
H^1(I_v,V^-)^{G_v^\mathrm{ur}}\ar[dd]\\
&
H^1_f(F_v,A)\ar@{^(->}[d]&
&
\\
0\ar[r]&
H^1_\ord(F_v,A)\ar[r]^-d&
H^1(F_v,A)\ar[r]^-c\ar[d]&
H^1(I_v,A^-)^{G_v^\mathrm{ur}}\ar[d]\\
&&
H^2(F_v,T)_\mathrm{tors}\ar[r]& H^2(I_v,T^-)_\mathrm{tors}^{G_v^\mathrm{ur}}.
}
\]
The map $b$ splits as a composition 
\[ b:H^1(F_v,V)\overset{b'}\longrightarrow H^1(F_v,V^-)\overset{b''}\longrightarrow H^1(I_v,V^-)^{G_v^\mathrm{ur}}. \]
The cokernel of $b'$ injects into $H^2(F_v,V^+)$, which is isomorphic to $H^0\bigl(F_v,(V^-)^\vee(1)\bigr)$ by local Tate duality. Since the $H^0\bigl(F_v,(T^-)^\vee(1)\bigr)=0$ by part \eqref{condition c} of Assumption \ref{ass}, the latter group is trivial. On the other hand, the cokernel of $b''$ injects into $H^2\bigl(G_v^\mathrm{ur},(V^-)^{I_v}\bigr)$, which is trivial because $V^-$ has trivial $I_v$-invariants by Assumption \ref{ass} (\emph{cf.} part (1) of Remark \ref{remark ass}). Therefore, the map $b$ is surjective. Moreover, local Tate duality identifies $H^2(F_v,T)$ with $H^0\bigl(F_v,T^\vee(1)\bigr)$, which is trivial by Assumption \ref{ass} (\emph{cf.} part (2) of Remark \ref{remark ass}), hence $H^2(F_v,T)=0$. The snake lemma then gives an isomorphism $\coker(a)\simeq\coker(g)$.
Now we study the cokernel of $a$. Let us consider the commutative diagram with exact rows 
\[\xymatrix{
0\ar[r]& 
H^1(F_v,T)_\mathrm{tors}\ar[r]\ar[d]&
H^1(F_v,T)\ar[r]\ar[d]^h &
\displaystyle{\frac{H^1(F_v,T)}{H^1(F_v,T)_\mathrm{tors}}}\ar[r]\ar[d]^a &0\\
0\ar[r]&
H^1(I_v,T^-)_\mathrm{tors}^{G_v^\mathrm{ur}}\ar[r]&
H^1(I_v,T^-)^{G_v^\mathrm{ur}}\ar[r]^-e&
\biggl(\displaystyle{\frac{H^1(I_v,T^-)}{H^1(I_v,T^-)_\mathrm{tors}}}\biggr)^{\!\!G_v^\mathrm{ur}}.
}
\]
The cokernel of $e$ is a subgroup of $H^1\bigl(G_v^\mathrm{ur},H^1(I_v,T^-)_\mathrm{tors}\bigr)$. The group $H^1(I_v,T^-)_\mathrm{tors}$ is (isomorphic to) the largest cotorsion quotient of $H^0(I_v,A^-)$, which is trivial by part \eqref{condition b} of Assumption \ref{ass}. Thus, $H^1\bigl(G_v^\mathrm{ur},H^1(I_v,T^-)_\mathrm{tors}\bigr)$ is trivial too, and so is $\coker(e)$. It follows that the natural map $\coker(h)\rightarrow \coker(a)$ is surjective. On the other hand, the map $h$ can be written as 
\[ h:H^1(F_v,T)\overset{h'}\longrightarrow H^1(F_v,T^-)\overset{h''}\longrightarrow H^1(I_v,T^-)^{G_v^\mathrm{ur}}. \]
The cokernel of $h'$ injects into $H^2(F_v,T^+)$, whose dual $H^0\bigl(F_v,(T^-)^\vee(1)\bigr)$ is trivial thanks to part \eqref{condition c} of Assumption \ref{ass}. Moreover, the cokernel of $h''$ injects into $H^2\bigl(G_v^\mathrm{ur},(T^-)^{I_v}\bigr)$, which is trivial because $(T^-)^{I_v}=0$ by Assumption \ref{ass} (\emph{cf.} part (1) of Remark \ref{remark ass}). Thus, the map $h$ is surjective, and we conclude that the cokernel of $a$ is trivial. Since $\coker(a)\simeq\coker(g)$, it follows that the cokernel of $g$ is trivial as well. This means that $g$ is surjective, \emph{i.e.}, $H^1_f(F_v,A)=H^1_\mathrm{ord}(F_v,A)$, as was to be shown. \end{proof}

If follows from a combination of equality \eqref{greenberg-eq2.8} and Lemmas \ref{greenberg-lemma2.14} and \ref{greenberg-lemma2.15} that 
\begin{equation}\label{ker(r)}
\#\ker(r)=\prod_{\substack{v\in\Sigma\\v\neq p}}c_v(A).
\end{equation}

\subsection{Surjectivity of restriction} \label{surjectivity-subsec}

Denote by $F_\Sigma$ the maximal extension of $F$ unramified outside $\Sigma$, and for any $\Gal(F_\Sigma/F)$-module $M$ set $H^i(F_\Sigma/F,M):=H^i\bigl(\Gal(F_\Sigma/F),M\bigr)$. The symbol $H^i(F_\Sigma/F_\infty,M)$ will have an analogous meaning. It follows from Lemma \ref{greenberg-lemma2.6} that 
\[ \Sel_{\mathrm{BK}}(A/F)=\ker\Bigl(H^1(F_\Sigma/F,A)\overset{\delta_\Sigma}\longrightarrow\mathcal{P}_\mathrm{BK}^\Sigma(A/F)\Bigr) \]
and
\[ \Sel_\mathrm{Gr}(A/F_\infty)=\ker\Bigl(H^1(F_\Sigma/F_\infty,A)\xrightarrow{\delta_{\infty,\Sigma}}\mathcal{P}^\Sigma_\mathrm{Gr}(A/F_\infty)\Bigr), \]
where $\delta_\Sigma$ and $\delta_{\infty,\Sigma}$ are the restriction maps. For notational convenience, if $M$ is a $G_F$-module and $F'$ is an algebraic extension of $F$, then we set $M(F'):=H^0(F',M)$.

The following lemma, whose proof uses (a global consequence of) part \eqref{condition a} of Assumption \ref{ass}, implies that the term corresponding to $E(F)_p$ in \eqref{greenbergintro} is trivial.

\begin{lemma} \label{greenberg-lemma2.8} 
$A(F_\infty)=0$.
\end{lemma}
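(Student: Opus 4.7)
The plan is to reduce the global statement to the local vanishing hypothesis \eqref{condition a} of Assumption \ref{ass} by localizing at a prime above $p$ and exploiting the fact that the relevant local Galois group is pro-$p$.

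First I would fix a prime $v$ of $F$ above $p$ (such a prime exists) and a prime $w$ of $F_\infty$ lying above $v$. Since $A(F_\infty)\subseteq A(F_{\infty,w})$, it is enough to prove the stronger assertion $A(F_{\infty,w})=0$. Set $\Gamma_v:=\Gal(F_{\infty,w}/F_v)$, which, being a quotient of $\Gamma\simeq\Z_p$, is a pro-$p$ group. By definition of $\Gamma_v$ and part \eqref{condition a} of Assumption \ref{ass},
\[ A(F_{\infty,w})^{\Gamma_v}=A(F_v)=H^0(F_v,A)=0. \]

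Now set $M:=A(F_{\infty,w})$. As an $\cO$-submodule of $A\simeq(K/\cO)^r$, the module $M$ is discrete, $p$-primary and $\cO$-cofinitely generated. Suppose for contradiction that $M\neq 0$; then the $\pi$-torsion $M[\pi]$ is a nonzero finite-dimensional $\F$-vector space, hence a nonzero finite $p$-group, endowed with a continuous action of the pro-$p$ group $\Gamma_v$. The standard fixed-point argument for a pro-$p$ group acting continuously on a finite $p$-group (every orbit has $p$-power size, so the fixed set has order congruent to $\#M[\pi]$ modulo $p$) gives $M[\pi]^{\Gamma_v}\neq 0$, contradicting $M^{\Gamma_v}=0$. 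Therefore $M=0$, and $A(F_\infty)=0$ as desired.

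The argument is short and the only potential subtlety is the final pro-$p$ invariants step; this is the main point to get right, though it is entirely routine once one records that $M$ is a cofinitely generated $\cO$-module and $\Gamma_v$ is pro-$p$. No additional hypotheses beyond \eqref{condition a} of Assumption \ref{ass} are used.
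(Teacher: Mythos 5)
Your argument is correct, and the one inaccuracy is harmless: $\Gal(F_{\infty,w}/F_v)$ is (isomorphic to) the decomposition subgroup of $\Gamma$ at $w$, hence a closed subgroup of $\Z_p$ rather than a quotient; either way it is trivial or isomorphic to $\Z_p$, so pro-$p$, which is all you use. The mechanism is the same as the paper's --- both proofs rest on part \eqref{condition a} of Assumption \ref{ass} combined with the fixed-point count for a $p$-group acting on a nonzero finite $p$-group --- but the packaging differs. The paper stays global: it first uses continuity (finiteness of $A[\pi]$ and openness of stabilizers) to reduce $A[\pi](F_\infty)=0$ to $A[\pi](F_n)=0$ for all $n$, and then inducts up the tower via Serre's fixed-point result applied to the cyclic groups $\Gal(F_{m+1}/F_m)\simeq\Z/p\Z$, the base case $A[\pi](F)=0$ coming from the local hypothesis at a prime above $p$. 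You instead localize at such a prime at the outset and prove the stronger local statement $A(F_{\infty,w})=0$ in one step, applying the orbit-counting argument directly to the pro-$p$ group $\Gamma_v$ acting on $M[\pi]$ (the $p$-power orbit sizes require the stabilizers to be open, i.e.\ continuity of the action on the finite discrete set $M[\pi]$, which does hold here). Your route avoids the induction and the reduction to finite layers, at the cost of nothing; the paper's route keeps the argument at the level of the global tower and only invokes the local condition to seed the base case.
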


\begin{proof} Since $A=\varinjlim_nA[\pi^n]$, it suffices to show that $A[\pi](F_\infty)=0$. Recall that $A[\pi]$ is a finite-dimensional vector space over the finite field $\F$ of characteristic $p$; in particular, $A[\pi]$ is finite. It follows that if $x\in A[\pi]$, then the stabilizer of $x$ in $G_F$ is a closed and finite index subgroup of $G_F$, so it is equal to $G_{F'}$ for a suitable finite extension $F'$ of $F$. If, moreover, $x\in A[\pi](F_\infty)$, then $F'\subset F_\infty$, hence $F'=F_n$ for a suitable $n\geq0$ and $x\in A[\pi](F_n)$. Thus, we are reduced to showing that $A[\pi](F_n)=0$ for all $n\geq0$. Since $A[\pi](F)=0$ by part \eqref{condition a} of Assumption \ref{ass} and $[F_{m+1}:F_m]=p$ for all $m\geq0$, the triviality of $A[\pi](F_n)$ for all $n\geq0$ follows from \cite[Proposition 26]{Serre} by induction on $n$. \end{proof}

\begin{remark}
If we imposed additional assumptions on the Galois representation $\rho_V$, then we could avoid using part \eqref{condition a} of Assumption \ref{ass} to deduce that $A(F_\infty)=0$. For example, if we required the reduction of $\rho_T$ modulo $\p$ to be irreducible with non-solvable image (which in the case of non-CM modular forms is true for all but finitely many $p$, \emph{cf.} \cite[Lemma 3.9]{LV}, \cite[\S 2]{ribet2}), then we could show that $A(F_\infty)$ is trivial by proceeding as in the proof of \cite[Lemma 2.4]{LV-kyoto} (see also \cite[Lemma 3.10, (2)]{LV}). However, the local vanishing from part \eqref{condition a} of Assumption \ref{ass} plays a much more delicate role in the proof of Lemma \ref{greenberg-lemma2.15}, which is the reason why we decided to assume this condition right from the outset.
\end{remark}

\begin{lemma} \label{surjectivity lemma} 
The map $\delta_\Sigma$ is surjective. 
\end{lemma}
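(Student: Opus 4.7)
The plan is to apply the Poitou--Tate global duality exact sequence for the discrete $G_{F_\Sigma/F}$-module $A$ equipped with the Bloch--Kato local conditions at every $v \in \Sigma$. Since Lemma \ref{greenberg-lemma2.6} ensures that the Bloch--Kato condition at primes $v \notin \Sigma$ is automatic, we may work inside $H^1(F_\Sigma/F, A)$ rather than $H^1(F, A)$, and the resulting Greenberg--Wiles exact sequence takes the form
\[ 0 \longrightarrow \Sel_\mathrm{BK}(A/F) \longrightarrow H^1(F_\Sigma/F, A) \xrightarrow{\delta_\Sigma} \mathcal{P}_\mathrm{BK}^\Sigma(A/F) \longrightarrow H^1_f(F, T)^\vee, \]
where $H^1_f(F, T)$ denotes the compact Bloch--Kato Selmer group of $T$ (each local class at $v \in \Sigma$ being required to lie in the preimage of $H^1_f(F_v, V)$ in $H^1(F_v, T)$). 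The identification of the cokernel uses local Tate duality together with the isomorphism $T^\vee(1) \simeq A$ from part \eqref{condition 4} of Assumption \ref{ass} and the self-orthogonality of the Bloch--Kato subspaces $H^1_f(F_v, V)$ under the induced pairing. It therefore suffices to prove that $H^1_f(F, T) = 0$.

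To establish this vanishing, I would first invoke Lemma \ref{greenberg-lemma2.8}, which via $A(F)=0$ together with the long exact cohomology sequence attached to $0 \to T \to V \to A \to 0$ yields an injection $H^1(F_\Sigma/F, T) \hookrightarrow H^1(F_\Sigma/F, V)$. In particular $H^1(F_\Sigma/F, T)$ is a torsion-free, finitely generated $\mathcal{O}$-module. Next, I would argue that $\Sel_\mathrm{BK}(V/F) = 0$: its image in $H^1(F,A)$ lands in $\Sel_\mathrm{BK}(A/F)$, which is finite by Assumption \ref{ass2}, but as the image of a $K$-vector space it is also divisible and hence zero. Consequently $\Sel_\mathrm{BK}(V/F)$ is contained in the image of $H^1(F_\Sigma/F, T)$ inside $H^1(F_\Sigma/F, V)$, i.e., inside a free $\mathcal{O}$-module of finite rank; since such a module is $\pi$-adically separated, it admits no non-trivial $K$-subspace, so $\Sel_\mathrm{BK}(V/F) = 0$. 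Finally, the injection $H^1(F_\Sigma/F, T) \hookrightarrow H^1(F_\Sigma/F, V)$ shows that $H^1_f(F, T)$ embeds into $\Sel_\mathrm{BK}(V/F) = 0$, completing the argument.

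The main technical hurdle will be the precise identification of the dual local conditions in the Poitou--Tate/Greenberg--Wiles sequence. At primes $v \nmid p$ this reduces to the classical self-orthogonality of the unramified subgroup under local Tate duality; at primes $v \mid p$ one needs the Bloch--Kato self-orthogonality $H^1_f(F_v, V)^\perp = H^1_f(F_v, V)$ with respect to the $G_{F_v}$-equivariant pairing induced by $V \simeq V^*$, together with the compatible passage between the integral structures on $T$ and $A$, which must be checked using the integrality of the pairing in part \eqref{condition 4} of Assumption \ref{ass}.
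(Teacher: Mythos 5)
Your argument is correct and is essentially the paper's own: the paper merely black-boxes the Poitou--Tate step by quoting Greenberg's criterion (finiteness of the dual divisible Selmer group plus vanishing of $H^0(F_\Sigma/F,A^*)$ implies surjectivity), whose proof on pp.~100--101 of the Cetraro notes is exactly your computation that the compact dual Selmer group --- identified with $H^1_f(F,T)$ via self-duality and the orthogonality of the Bloch--Kato local conditions --- is torsion-free of $\mathcal{O}$-rank zero, hence trivial. The inputs you use (finiteness of $\Sel_\mathrm{BK}(A/F)$, the pairing from part \eqref{condition 4} of Assumption \ref{ass}, and $A(F_\infty)=0$ from Lemma \ref{greenberg-lemma2.8}) are the same ones the paper verifies when applying Greenberg's result.
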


\begin{proof} We apply some results from \cite[Section 4]{greenberg-cetraro}, so we first recall the setting of \cite{greenberg-cetraro}. Let $M$ be a $G_F$-module isomorphic to $(\Q_p/\Z_p)^d$ for some 
integer $d\geq 1$. As in \cite{greenberg-cetraro}, define $T^*:=\Hom_{\Z_p}(M,\mu_{p^\infty})=M^\vee(1)$, 
$V^*:=T^*\otimes_{\Z_p}\Q_p$ and $M^*:=V^*/T^*=T^*\otimes_{\Z_p}\Q_p/\Z_p$. Let
\[ H^1(F_v,M)\times H^1(F_v,T^*)\longrightarrow\Q_p/\Z_p \] 
be the local Tate pairing, which is perfect. Suppose that for each $v\in\Sigma$ we have a divisible subgroup $L_v$ of $H^1(F_v,M)$, then consider the Selmer group 
\[ \Sel(M/F):=\ker\biggl(H^1(F_\Sigma/F,M)\overset\gamma\longrightarrow\prod_{v\in\Sigma}\frac{H^1(F_v,M)}{L_v}\biggr). \]
Furthermore, denote by $U_v^*$ the orthogonal complement of $L_v$ under the local Tate pairing. 
Write $L_v^*$ for the image in $H^1(F_v,M^*)$ of the $\Q_p$-subspace of $H^1(F_v,V^*)$ generated by the image of $U_v^*$ under the natural map $H^1(F_v,T^*)\rightarrow H^1(F_v,V^*)$. Set
\[ \Sel(M^*/F):=\ker\biggl(H^1(F_\Sigma/F,M^*)\longrightarrow\prod_{v\in\Sigma}\frac{H^1(F_v,M^*)}{L_v^*}\biggr). \]
The arguments in \cite[pp. 100--101]{greenberg-cetraro} show that if $\Sel(M^*/F)$ is finite and $H^0(F_\Sigma/F,M^*)$ is trivial, then $\gamma$ is surjective. 

Now we return to our setting, with $M=A$, $T$ and $V$ as in Assumption \ref{ass}. To apply the results explained above, we recall that, by Assumption \ref{ass}, the image of $T$ in $V^*= V^\vee(1)$ under the isomorphism $V\simeq V^*$ is homothetic to $T^*$. Furthermore, the Bloch--Kato conditions $H^1_f(F_v,A)$ and $H^1_f(F_v,A^*)$ are orthogonal under local Tate duality (\cite[Proposition 3.8]{BK}). Thus, $\Sel_\mathrm{BK}(A/F)\simeq\Sel_\mathrm{BK}(A^*/F)$, so the finiteness of $\Sel_\mathrm{BK}(A/F)$ is equivalent to that of $\Sel_\mathrm{BK}(A^*/F)$. To conclude the proof we only need to check that $H^0(F_\Sigma/F,A^*)$ is trivial. Since $T$ and $T^*$ are homothetic, and $A=V/T$, $A^*=V^*/T^*$, it is enough to show that $H^0(F_\Sigma/F,A)$ is trivial. Since $A$ is unramified outside $\Sigma$, we have $A=H^0(F_\Sigma,A)$, so $H^0(F,A)=H^0(F_\Sigma/F,A)$. It follows from Lemma \ref{greenberg-lemma2.8} that $H^0(F_\Sigma/F_\infty,A)=0$ and, \emph{a fortiori}, $H^0(F_\Sigma/F,A)=0$. \end{proof}

\subsection{An application of the snake lemma} 

It follows from Lemma \ref{surjectivity lemma} that there is a commutative diagram 
\begin{equation} \label{greenberg-diagram}
\xymatrix@C=33pt@R=30pt{
0\ar[r]& 
\Sel_\mathrm{BK}(A/F)\ar[r]\ar[d]^-s&
H^1(F_\Sigma/F,A)\ar[r]^-{\delta_\Sigma}\ar[d]^-{h}&
\mathcal{P}_\mathrm{BK}^\Sigma(A/F)\ar[r]\ar[d]^-{g}&
0\\
0\ar[r]& 
S^\Gamma\ar[r]&
H^1(F_\Sigma/F_\infty,A)^\Gamma\ar[r]^-{\delta_{\infty,\Sigma}^\Gamma}&
\mathcal{P}^\Sigma_\mathrm{Gr}(A/F_\infty)^\Gamma
}
\end{equation}
with exact rows, where $s$ is as in \eqref{s-map-eq}, $h$ is restriction, $g$ is as in \eqref{g-map-eq} and $\delta_{\infty,\Sigma}^\Gamma$ is the map induced by $\delta_{\infty,\Sigma}$ between $\Gamma$-invariants.

\begin{lemma} \label{lemma2.3}
The map $h$ is an isomorphism and the map $s$ is injective.
\end{lemma}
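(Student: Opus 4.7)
The plan is to deduce both statements at once by an inflation-restriction analysis, combined with Lemma \ref{greenberg-lemma2.8} and a small diagram chase.

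First I would focus on the middle vertical map $h$, which is the restriction map in Galois cohomology associated with the tower $F \subset F_\infty \subset F_\Sigma$. The quotient $\Gal(F_\Sigma/F)/\Gal(F_\Sigma/F_\infty)$ is exactly $\Gamma$, so the five-term inflation-restriction exact sequence reads
\[
0 \longrightarrow H^1\bigl(\Gamma,A(F_\infty)\bigr) \longrightarrow H^1(F_\Sigma/F,A) \overset{h}\longrightarrow H^1(F_\Sigma/F_\infty,A)^\Gamma \longrightarrow H^2\bigl(\Gamma,A(F_\infty)\bigr).
\]
By Lemma \ref{greenberg-lemma2.8} we have $A(F_\infty)=0$, so the two outer cohomology groups vanish. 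This forces $h$ to be an isomorphism, which is the first assertion.

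For the second assertion, I would do a short diagram chase on the left square of \eqref{greenberg-diagram}. Let $x\in \Sel_\mathrm{BK}(A/F)$ lie in $\ker(s)$, and denote by $\tilde x\in H^1(F_\Sigma/F,A)$ its image under the (injective) inclusion in the top row. Commutativity of the left square gives $h(\tilde x)=0$ in $H^1(F_\Sigma/F_\infty,A)^\Gamma$, because the image of $\tilde x$ factors through $s(x)=0$ followed by the inclusion $S^\Gamma\hookrightarrow H^1(F_\Sigma/F_\infty,A)^\Gamma$. Since $h$ has just been shown to be injective, we conclude $\tilde x=0$, hence $x=0$ by the injectivity of $\Sel_\mathrm{BK}(A/F)\hookrightarrow H^1(F_\Sigma/F,A)$.

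I do not expect any real obstacle here: the only substantive input is the vanishing $A(F_\infty)=0$ from Lemma \ref{greenberg-lemma2.8} (which in turn uses part \eqref{condition a} of Assumption \ref{ass}), together with the fact that $\Gamma\simeq\Z_p$ has cohomological dimension $1$ (so even the $H^2$ term in the five-term sequence is automatically trivial, though this is not needed once we know $A(F_\infty)=0$).
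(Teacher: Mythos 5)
Your argument is exactly the paper's: the five-term inflation--restriction sequence for $F\subset F_\infty\subset F_\Sigma$ together with $A(F_\infty)=0$ (Lemma \ref{greenberg-lemma2.8}) gives that $h$ is an isomorphism, and the injectivity of $s$ follows from the inclusion $\ker(s)\hookrightarrow\ker(h)$ via the left square of \eqref{greenberg-diagram}. The proposal is correct and essentially identical to the paper's proof, just written out in more detail.
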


\begin{proof} By inflation-restriction, there is an exact sequence 
\[ 0\longrightarrow H^1\bigl(\Gamma,A(F_\infty)\bigr)\longrightarrow H^1(F_\Sigma/F,A)\overset{h}\longrightarrow H^1(F_\Sigma/F_\infty,A)^\Gamma\longrightarrow H^2\bigl(\Gamma,A(F_\infty)\bigr), \]
and the lemma follows from Lemma \ref{greenberg-lemma2.8} and the injection $\ker(s)\hookrightarrow\ker(h)$. \end{proof}

Recall from \eqref{1 and 2} that $S^\Gamma$ is finite, so $\coker(s)$ is finite.

\begin{lemma} \label{lemma2.8}
$\#S^\Gamma=\#\Sel_\mathrm{BK}(A/F)\cdot\#\ker(g)$. 
\end{lemma}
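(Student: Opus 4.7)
The plan is to extract the equality $\#\ker(g)=\#\coker(s)$ from the diagram \eqref{greenberg-diagram} via the snake lemma, and then combine this with \eqref{greenberg-eq1}.

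More precisely, I would first observe that the rows of \eqref{greenberg-diagram} form a morphism of complexes in which the top row is a short exact sequence (the right-hand surjectivity comes from Lemma \ref{surjectivity lemma}) and the bottom row is exact at its first two terms (by the very definition of $S^\Gamma=\Sel_{\mathrm{Gr}}(A/F_\infty)^\Gamma$ using $\delta_{\infty,\Sigma}^\Gamma$). Applying the snake lemma to this diagram produces an exact sequence
\[
0\longrightarrow\ker(s)\longrightarrow\ker(h)\longrightarrow\ker(g)\longrightarrow\coker(s)\longrightarrow\coker(h).
\]
Then I would invoke Lemma \ref{lemma2.3}, according to which $h$ is an isomorphism and $s$ is injective, so that $\ker(s)=\ker(h)=\coker(h)=0$. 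Feeding this information into the displayed exact sequence yields an isomorphism of finite groups
\[
\ker(g)\;\overset{\simeq}{\longrightarrow}\;\coker(s).
\]

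To conclude, I would substitute $\#\coker(s)=\#\ker(g)$ and $\#\ker(s)=1$ into \eqref{greenberg-eq1}, which gives exactly
\[
\#S^\Gamma=\#\Sel_\mathrm{BK}(A/F)\cdot\#\ker(g),
\]
as desired. There is essentially no obstacle here: all the real work has been carried out earlier (Lemma \ref{surjectivity lemma} for right-exactness of the top row, and Lemma \ref{lemma2.3} for the vanishing of the relevant kernels and cokernels of $h$), and the present lemma is a purely homological consequence of the snake lemma applied to \eqref{greenberg-diagram}. The only point requiring a brief word of care is that the snake lemma is being applied even though the bottom row is not a priori right-exact; this is harmless because we only need the portion of the snake sequence up through $\coker(h)$, and this portion is valid as soon as the rows are exact at their first two terms.
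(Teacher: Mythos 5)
Your proposal is correct and matches the paper's proof: the paper likewise applies the snake lemma to diagram \eqref{greenberg-diagram}, uses Lemma \ref{lemma2.3} to kill $\ker(h)$ and $\coker(h)$, obtains $\ker(g)\simeq\coker(s)$, and concludes via \eqref{greenberg-eq1}. Your remark about only needing the portion of the snake sequence through $\coker(h)$ is a fair point of care, and it is indeed harmless here.
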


\begin{proof} From \eqref{greenberg-diagram}, the snake lemma gives an exact sequence
\begin{equation}\label{greenberg-eq2}
\ker(h)\longrightarrow \ker(g)\longrightarrow 
\coker(s)\longrightarrow\coker(h).
\end{equation}
On the other hand, by Lemma \ref{lemma2.3}, both $\ker(h)$ and $\coker(h)$ are trivial, so \eqref{greenberg-eq2} yields an isomorphism $\ker(g)\simeq\coker(s)$. In particular, $\#\coker(s)=\#\ker(g)$, and the searched-for equality follows immediately from \eqref{greenberg-eq1}. \end{proof}

The proposition below provides a crucial step towards our main result.

\begin{proposition} \label{greenberg1}
$\# S^\Gamma= \#\Sel_\mathrm{BK}(A/F)\cdot\prod\limits_{\substack{v\in\Sigma\\v\nmid p}}c_v(A)$. 
\end{proposition}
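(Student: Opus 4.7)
The proof will assemble already-proved pieces, so the plan is essentially a bookkeeping argument with no new technical content. The key identity I would exploit is the equality $\#S^\Gamma = \#\Sel_{\mathrm{BK}}(A/F)\cdot\#\ker(g)$ supplied by Lemma \ref{lemma2.8}, so the entire statement reduces to computing $\#\ker(g)$, where $g\colon\mathcal{P}_\mathrm{BK}^\Sigma(A/F)\to\mathcal{P}^\Sigma_\mathrm{Gr}(A/F_\infty)^\Gamma$ is the restriction map from \eqref{g-map-eq}.

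The first step would be to recall from \S \ref{r-subsec} that $\ker(r)$, computed for the global map $r\colon\mathcal{P}_\mathrm{BK}(A/F)\to\mathcal{P}_\mathrm{Gr}(A/F_\infty)$, is actually contained in the $\Sigma$-part $\mathcal{P}_\mathrm{BK}^\Sigma(A/F)$ thanks to Lemma \ref{greenberg-lemma2.6}, and in fact coincides with $\ker(g)$. This identification is spelled out explicitly in the paragraph leading up to \eqref{g-map-eq}, so no further argument is required.

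The second step would be to invoke formula \eqref{ker(r)}, namely $\#\ker(r)=\prod_{v\in\Sigma,\,v\nmid p}c_v(A)$, which itself is the combined output of Lemma \ref{greenberg-lemma2.14} (which supplies the factor $c_v(A)$ at each bad prime $v\nmid p$ in $\Sigma$) and Lemma \ref{greenberg-lemma2.15} (which, via the triviality of local invariants guaranteed by part \eqref{condition a} of Assumption \ref{ass}, kills the contribution from primes above $p$). Concatenating this with Lemma \ref{lemma2.8} gives the desired equality.

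There is no real obstacle here: all the hard work has been done upstream, most notably in Lemma \ref{greenberg-lemma2.15}, where the injectivity of $r_{v,w}$ at primes above $p$ was the subtle point and is what ensures that only the $v\nmid p$ factors appear in the final product. The proof itself will be two or three lines long, simply chaining Lemma \ref{lemma2.8}, the identification $\ker(g)=\ker(r)$, and formula \eqref{ker(r)}.
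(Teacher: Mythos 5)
Your proposal matches the paper's proof exactly: it combines Lemma \ref{lemma2.8}, the identification $\ker(g)=\ker(r)$ from the discussion preceding \eqref{g-map-eq}, and formula \eqref{ker(r)}, which is precisely the chain of references the paper itself uses. No gaps; the argument is correct as stated.
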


\begin{proof}
Combine Lemma \ref{lemma2.8}, the equality $\ker(r)=\ker(g)$ and \eqref{ker(r)}. 
\end{proof}

\subsection{Finite index $\Lambda$-submodules} 

Our present goal is to generalize \cite[Proposition 4.9]{greenberg-cetraro}, which shows that if $E_{/F}$ is an elliptic curve, then $H^1(F_\Sigma/F,E[p^\infty])$ does not have proper $\Lambda$-submodules of finite index. As a consequence, we will see that $S_\Gamma=0$. 

In order to prove this non-existence result, we need four lemmas.

\begin{lemma} \label{g surj}
The map $g$ is surjective. 
\end{lemma}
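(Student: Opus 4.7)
The plan is to decompose $g$ as a product of local maps $g_v$ indexed by $v\in\Sigma$, and show each $g_v$ is surjective. Since $\Gamma\simeq\Z_p$ acts transitively on the primes $w$ of $F_\infty$ above a given $v$, with stabilizer $\Gamma_w$, the natural identification
\[
\Big(\prod_{w\mid v}H^1(F_{\infty,w},A)\big/H^1_?(F_{\infty,w},A)\Big)^\Gamma\;\simeq\;\bigl(H^1(F_{\infty,w},A)\big/H^1_?(F_{\infty,w},A)\bigr)^{\Gamma_w}
\]
(a Shapiro-type statement) reduces surjectivity of $g_v$ to that of the restriction map at a single prime $w\mid v$ onto its $\Gamma_w$-invariants.

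For $v\nmid p$, the strategy is to exploit that $F_\infty/F$ is unramified at $v$, so that $I_{\infty,w}=I_v$ and $G_{\infty,w}/I_{\infty,w}$ is a closed subgroup of $G_v/I_v\simeq\hat\Z$, a group of cohomological dimension $1$. Two applications of inflation--restriction then yield isomorphisms
\[
H^1(F_v,A)\big/H^1_\mathrm{ur}(F_v,A)\;\simeq\; H^1(I_v,A)^{G_v/I_v},\qquad H^1(F_{\infty,w},A)\big/H^1_\mathrm{ur}(F_{\infty,w},A)\;\simeq\; H^1(I_v,A)^{G_{\infty,w}/I_{\infty,w}},
\]
and taking $\Gamma_w$-invariants on the second recovers the first. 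Surjectivity of $g_v$ then follows from the obvious projection $H^1(F_v,A)/H^1_f(F_v,A)\twoheadrightarrow H^1(F_v,A)/H^1_\mathrm{ur}(F_v,A)$, which is available because $H^1_f\subset H^1_\mathrm{ur}$.

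For $v\mid p$, the cyclotomic extension is totally ramified, so $\Gamma_w=\Gamma$ and $w$ is unique; moreover, by Lemma \ref{greenberg-lemma2.15}, one has $H^1_f(F_v,A)=H^1_\mathrm{ord}(F_v,A)$, so that $g_v$ is the restriction map $H^1(F_v,A)/H^1_\mathrm{ord}(F_v,A)\to\bigl(H^1(F_{\infty,w},A)/H^1_\mathrm{ord}(F_{\infty,w},A)\bigr)^\Gamma$. I would run the same inflation--restriction scheme, now applied to the ordinary filtration $0\to A^+\to A\to A^-\to 0$: the defining map $H^1(L,A)\to H^1(I_L,A^-)$ (with kernel $H^1_\mathrm{ord}(L,A)$) factors through $H^1(L,A^-)$, and using the vanishing $(A^-)^{I_{\infty,w}}=0$ supplied by Assumption \ref{ass}(6b) together with part (1) of Remark \ref{remark ass}, and the vanishing $H^2(F_v,A^+)=0$ coming from Assumption \ref{ass}(6c) via local Tate duality, one identifies both $H^1(F_v,A)/H^1_\mathrm{ord}(F_v,A)$ and $H^1(F_{\infty,w},A)/H^1_\mathrm{ord}(F_{\infty,w},A)$ with suitable $G/I$-invariants of $H^1(I,A^-)$. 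Combining these with the isomorphism $H^1(I_v,A^-)\simeq H^1(I_{\infty,w},A^-)^\Gamma$ (yet another inflation--restriction argument using $(A^-)^{I_{\infty,w}}=0$ and the fact that $I_v/I_{\infty,w}\simeq\Z_p$ has cohomological dimension $1$) closes the case exactly as in the $v\nmid p$ setting.

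The main obstacle is the $v\mid p$ step: unlike the $v\nmid p$ case, where surjectivity drops out formally from cohomological dimension $1$, at primes above $p$ one must verify that the surjection $H^1(L,A)\twoheadrightarrow H^1(L,A^-)$ persists from $L=F_v$ up to $L=F_{\infty,w}$. Since $F_{\infty,w}$ is not a local field in the usual sense, local Tate duality does not apply to it directly, and one must pass to the limit over the finite layers $F_{n,v}$, using Assumption \ref{ass}(6c) to propagate vanishing of the relevant $H^2$ uniformly in $n$, in the spirit of Greenberg's arguments in \cite[\S 4]{greenberg-cetraro}.
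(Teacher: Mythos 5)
Your argument is correct and is essentially the paper's own: the printed proof is a one-line appeal to the fact that $\Gamma$ (and the relevant unramified and inertia quotients) have cohomological dimension $1$ to check surjectivity of each local component, which is exactly what your Shapiro-type reduction, the inflation--restriction identifications with invariants of $H^1(I_v,A)$ resp.\ $H^1(I_v,A^-)$, and the limit-over-layers vanishing of $H^2(F_{\infty,w},A^+)$ make precise. Only cosmetic points: for a general number field $F$ a prime $v\mid p$ need not be totally ramified in $F_\infty/F$ nor have a unique prime above it, so one should simply work with the decomposition group $\Gamma_w\simeq\Z_p$ exactly as in your $v\nmid p$ case, and the archimedean components of $g$ are trivially surjective because $p$ is odd and $A$ is $p$-primary.
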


\begin{proof} Using the fact that $\Gamma$ has cohomological dimension $1$, one can check that each of the local components defining $g$ is surjective. \end{proof}

The second lemma we are interested in is a generalization of \cite[Lemma 2.3]{greenberg-cetraro}.

\begin{lemma} \label{geenberg2.3}
Let $v_n$ be the prime of $F_n$ above $p$, denote by $F_{n,v_n}$ the completion of $F_n$ at $v_n$ and set $G_{F_{n,v_n}}:=\Gal(\bar\Q_p/F_{n,v_n})$. Let $\psi:G_{F_v}\rightarrow\mathcal{O}^\times$ be a character and write $(K/\cO)(\psi)$ for the group $K/\cO$ with $G_{F_v}$-action
given by $\psi$. If ${\psi|}_{G_{F_{n,v_n}}}$ is non-trivial and does not coincide with the cyclotomic character, then the $\cO$-corank of $H^1\bigl(F_{n,v_n},(K/\cO)(\psi)\bigr)$ is $[F_{n,v_v}:\Q_p]$.  
\end{lemma}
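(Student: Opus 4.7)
The plan is to reduce the computation of the $\cO$-corank of $H^1\bigl(F_{n,v_n},(K/\cO)(\psi)\bigr)$ to the computation of the $K$-dimension of $H^1\bigl(F_{n,v_n},K(\psi)\bigr)$, and then to apply the local Euler characteristic formula together with local Tate duality. Throughout, write $L:=F_{n,v_n}$ and let $K(\psi)$, $\cO(\psi)$, $(K/\cO)(\psi)$ denote the one-dimensional representations with $G_L$-action given by $\psi$; these fit into a short exact sequence
\[ 0\longrightarrow\cO(\psi)\longrightarrow K(\psi)\longrightarrow(K/\cO)(\psi)\longrightarrow0. \]

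First I would observe that the $\cO$-corank of $H^1\bigl(L,(K/\cO)(\psi)\bigr)$ equals $\dim_K H^1\bigl(L,K(\psi)\bigr)$. Indeed, the associated long exact sequence in cohomology shows that the image of $H^1\bigl(L,K(\psi)\bigr)$ in $H^1\bigl(L,(K/\cO)(\psi)\bigr)$ is the maximal $\cO$-divisible subgroup, whose corank equals $\dim_K H^1\bigl(L,K(\psi)\bigr)$, while the cokernel of this map injects into $H^2\bigl(L,\cO(\psi)\bigr)$, which is finite by local Tate duality (its Pontryagin dual is $H^0\bigl(L,(K/\cO)(\psi^{-1}\chi_\cyc)\bigr)$, a cofinitely generated torsion group that is in fact finite because the residual representation is finite).

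Next I would apply the local Euler characteristic formula for the finite-dimensional $K$-vector space $K(\psi)$: passing to a $\Z_p$-lattice and invoking Tate's formula layer by layer, one obtains
\[ \dim_K H^0\bigl(L,K(\psi)\bigr)-\dim_K H^1\bigl(L,K(\psi)\bigr)+\dim_K H^2\bigl(L,K(\psi)\bigr)=-[L:\Q_p]. \]
Now by local Tate duality one has $H^2\bigl(L,K(\psi)\bigr)\simeq H^0\bigl(L,K(\psi^{-1}\chi_\cyc)\bigr)^\vee$, since $K(\psi)^*(1)=K(\psi^{-1}\chi_\cyc)$.

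Finally I would use the hypotheses: since $\psi|_{G_L}$ is non-trivial, $H^0\bigl(L,K(\psi)\bigr)=0$; and since $\psi|_{G_L}$ does not coincide with the cyclotomic character, the character $\psi^{-1}\chi_\cyc$ is non-trivial on $G_L$, so $H^0\bigl(L,K(\psi^{-1}\chi_\cyc)\bigr)=0$ and hence $H^2\bigl(L,K(\psi)\bigr)=0$. The Euler characteristic formula then yields $\dim_K H^1\bigl(L,K(\psi)\bigr)=[L:\Q_p]$, which by the first step gives the desired value for the $\cO$-corank. The only real subtlety is the bookkeeping of the first paragraph, making sure that moving between $V$, $T$ and $A=V/T$ preserves ranks up to finite error; apart from that, the lemma is a direct computation.
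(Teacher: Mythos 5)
Your proof is correct and follows essentially the same route as the paper's: compute $\dim_K H^1\bigl(L,K(\psi)\bigr)=[L:\Q_p]$ via the local Euler characteristic formula and local Tate duality (using the non-triviality hypotheses to kill $H^0$ and $H^2$), then transfer this to the $\cO$-corank of $H^1\bigl(L,(K/\cO)(\psi)\bigr)$ via the exact sequence coming from $0\to\cO(\psi)\to K(\psi)\to (K/\cO)(\psi)\to 0$, with the paper citing Tate's result that the image of $H^1(L,\cO(\psi))$ is a lattice. One small remark: for the cokernel in your first step you correctly only need, and only claim, that $H^2\bigl(L,\cO(\psi)\bigr)$ is \emph{finite}; I would phrase the justification as ``$H^0\bigl(L,(K/\cO)(\psi^{-1}\chi_\cyc)\bigr)$ is a proper $\cO$-submodule of $K/\cO$ (as $\psi^{-1}\chi_\cyc|_{G_L}$ is non-trivial), hence finite,'' which is cleaner than the appeal to the residual representation.
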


\begin{proof} To simplify the notation, put $C:=(K/\cO)(\psi)$, $M:=F_{n,v_n}$ and $G_M:=\Gal(\bar\Q_p/M)$. Let $T(C)$ be the $p$-adic Tate module of $C$, set $V(C):=T(C)\otimes_\cO K$ and define
\[ h_i:=\dim_{\Q_p}\Bigl(H^i\bigl(M,V(C)\bigr)\!\Bigr) \] 
for $i=0,1,2$. Since $\dim_{\Q_p}\bigl(V(C)\bigr)=[{K}:\Q_p]$, the Euler characteristic of $V(C)$ over $M$ is given by the formula 
\begin{equation} \label{euler characteristic}
h_0-h_1+h_2=-[M:\Q_p]\cdot[{K}:\Q_p].
\end{equation} 
Let $\mathcal{K}(\psi)$ denote $\mathcal{K}$ (viewed as a $\mathcal{K}$-vector space over itself) equipped with the $G_{F_v}$-action induced by $\psi$. There is an isomorphism $V(C)\simeq\mathcal{K}(\psi)$, so $H^0\bigl(M,V(C)\bigr)=0$, where the restriction of $\psi$ to $G_M$ is non-trivial by assumption. Let $\Q_p(1)$ be the Tate twist of $\Q_p$ and define 
\[ V(C)^*:=\Hom_{\Q_p}\bigl(V(C),\Q_p(1)\bigr). \] 
The $H^0\bigl(M,V(C)^*\bigr)=0$ because $V(C)\simeq\mathcal{K}(\psi)$ and $\psi$ does not coincide with the cyclotomic character. Poitou--Tate duality implies that $H^2\bigl(M,V(C)\bigr)$ is dual to $H^0\bigl(M,V(C)^*\bigr)$. Since $H^0\bigl(M,V(C)^*\bigr)=0$, we deduce that $H^2\bigl(M,V(C)\bigr)=0$.  
Therefore, $h_0=h_2=0$ and it follows from \eqref{euler characteristic} that $h_1=[M:\Q_p]\cdot[{K}:\Q_p]$. 

We want to prove that the $\Z_p$-corank of $H^1(M,C)$ is equal to $h_1$. This completes the proof of the lemma because, by what we have just shown, the $\Z_p$-corank of $H^1(M,C)$ is then equal to $[M:\Q_p]\cdot[K:\Q_p]$ and so its $\cO$-corank must be equal to $[M:\Q_p]$. To prove this claim, note that the short exact sequence 
\[ 0\longrightarrow T(C)\longrightarrow V(C)\longrightarrow C\longrightarrow0 \] 
induces an exact sequence 
\[ H^1\bigl(M,T(C)\bigr)\longrightarrow H^1\bigl(M,V(C)\bigr)\longrightarrow H^1(M,C)\longrightarrow H^2\bigl(M,T(C)\bigr). \]
Set $T(C)^*:=\Hom\bigl(T(C),\Bmu_{p^\infty}\bigr)$. By Poitou--Tate duality, $H^2\bigl(M,T(C)\bigr)$ is identified with $H^0\bigl(M,T(C)^*\bigr)$. Since the action of $G_M$ on $T(C)$ is via $\psi$, we get an isomorphism
\[ H^0\bigl(M,T(C)^*\bigr)\simeq H^0\bigl(M,(K/\cO)(\chi_\cyc\psi^{-1})\bigr), \]
where $(K/\cO)(\chi_\cyc\psi^{-1})$ is $K/\mathcal{O}$ with Galois action twisted by $\chi_\cyc\psi^{-1}$. On the other hand, $\psi\neq\chi_\cyc$ by assumption, so $H^0\bigl(M,T(C)^*\bigr)=0$. Thus, $H^2\bigl(M,V(C)\bigr)$ is also trivial. Now, by \cite[Proposition 2.3]{Tate}, the image of $H^1\bigl(M,T(C)\bigr)$ in $H^1\bigl(M,V(C)\bigr)$ is a lattice, say $P$, so the quotient of $H^1\bigl(M,V(C)\bigr)$ by $P$, which is just 
$H^1(M,C)$ because $H^2\bigl(M,T(C)\bigr)=0$, is isomorphic to $(\Q_p/\Z_p)^{h_1}$. Taking $\Z_p$-duals shows that the $\Z_p$-corank of $H^1(M,C)$ is $h_1$, as was to be proved. \end{proof}

\begin{lemma} \label{finite}
If $v\nmid p$, then $H^0(F_v,A)$ is finite.  
\end{lemma}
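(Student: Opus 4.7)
My plan is to reduce the finiteness of $H^0(F_v,A)=A^{G_v}$ to the vanishing $V^{G_v}=0$, and then to force $V^{G_v}=0$ by a Frobenius-eigenvalue (purity) argument.

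First, I would take $G_v$-cohomology of the short exact sequence $0\to T\to V\to A\to 0$ of continuous $G_v$-modules, obtaining
\[
0\longrightarrow T^{G_v}\longrightarrow V^{G_v}\longrightarrow A^{G_v}\overset{\partial}{\longrightarrow} H^1(G_v,T).
\]
Since $A$ is $\pi$-power torsion, the image of $\partial$ lies in $H^1(G_v,T)_{\mathrm{tors}}$, and for $v\nmid p$ the continuous cohomology group $H^1(G_v,T)$ is finitely generated over $\cO$, so its torsion subgroup is finite. Because $V^{G_v}/T^{G_v}\simeq(K/\cO)^{\dim_K V^{G_v}}$, this shows that $H^0(F_v,A)$ is finite if and only if $V^{G_v}=0$.

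Next, to establish $V^{G_v}=0$, I would combine self-duality from Assumption \ref{ass}\eqref{condition 2} with local Tate duality to identify $\dim_K V^{G_v}=\dim_K H^2(F_v,V)$, and then apply the local Euler characteristic formula at $v\nmid p$ to express all three dimensions $\dim H^i(F_v,V)$ in terms of the single number $\dim_K V^{G_v}$. To actually force $V^{G_v}=0$, one uses that the Frobenius eigenvalues on $V$ stay away from $1$: in the modular forms setting of \S\ref{sec2.3}, Ramanujan bounds on $|a_v(f)|$ ensure that the eigenvalues of $\mathrm{Frob}_v$ on the self-dual twist $V_{f,\p}(k/2)$ at $v\nmid Np$ have absolute value $v^{-1/2}\neq 1$, while at the ramified primes $v\in\Sigma$, $v\nmid p$, the known local structure of the Weil--Deligne representation (Steinberg, principal series, or supercuspidal) excludes $1$ as a Frobenius eigenvalue on $V$.

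The main obstacle is this second step. The reduction in the previous paragraph is a routine piece of local cohomology, but $V^{G_v}=0$ is not a formal consequence of the conditions packaged in Assumption \ref{ass}; it is a weight/purity statement that needs a separate input on the local representation at $v$, either as a standing hypothesis or via the motivic origin of $V$ in the cases of interest. Once it is granted, the finiteness of $H^0(F_v,A)$ follows immediately from the cofinite generation argument above.
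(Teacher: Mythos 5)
Your first step---reducing the finiteness of $H^0(F_v,A)$ to the vanishing $V^{G_v}=0$ via the long exact sequence of $0\to T\to V\to A\to0$---is correct and essentially formal. But the second step is where the proposal fails to close, and your own diagnosis of it is not quite right: the finiteness of $H^0(F_v,A)$ \emph{is} a consequence of Assumption~\ref{ass} alone, with no purity, Ramanujan, or Weil--Deligne input needed. The ingredient you are missing is part~\eqref{condition 4} of Assumption~\ref{ass}, the non-degenerate $G_F$-equivariant pairing $T\times A\to\Bmu_{p^\infty}$ inducing compatible pairings $A[p^m]\times A[p^m]\to\Bmu_{p^m}$.

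Here is how the paper closes the gap. If $H^0(F_v,A)$ were infinite, it would contain a nontrivial $p$-divisible subgroup $B$ fixed by $G_{F_v}$. Pick $b\in B[p^M]\smallsetminus\{0\}$ and, by non-degeneracy, $c\in A[p^M]$ with $(c,b)_M=\zeta_{p^r}$ a nontrivial root of unity; by continuity, both $b$ and $c$ are $G_H$-invariant for a suitable finite extension $H/F_v$. For each $N\geq M$, divisibility of $B$ produces $b_N\in B[p^N]$ with $p^{N-M}b_N=b$; then $b_N\in B$ is also $G_{F_v}$-invariant, so Galois-equivariance of the pairing shows that $(c,b_N)_N\in\Bmu_{p^N}$ is fixed by $G_H$, and one checks that it has exact order $p^{N-M+r}$. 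Letting $N\to\infty$ forces $\Bmu_{p^\infty}\subset H$, contradicting the finiteness of $H/F_v$ (the $p$-cyclotomic tower over $F_v$ is infinite). Your Frobenius-eigenvalue route is a legitimate alternative that proves the stronger statement $V^{G_v}=0$ when $V$ comes from a modular form, but it imports motivic hypotheses not present in Assumption~\ref{ass} and so does not prove the lemma in the generality required.
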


\begin{proof} If $H^0(F_v,A)$ is not finite, then there exists a $p$-divisible subgroup $B\subset A$ that is fixed by $G_{F_v}$. Let us choose an element $b\in B[p^M]\smallsetminus\{0\}$ for some integer $M\geq1$. By part (4) of Assumption \ref{ass}, for every integer $m\geq1$ there is a skew-symmetric, $G_F$-equivariant and non-degenerate pairing 
\[ {(\cdot,\cdot)}_m:A[p^m]\times A[p^m]\longrightarrow\Bmu_{p^m}. \]
Choose $c\in A[p^M]$ such that ${(c,b)}_M=\zeta_{p^r}$ for a non-trivial, primitive $p^r$-th root of unity $\zeta_{p^r}$ (so $r\leq M$). Using the fact that the representation $\rho_T$ is continuous and $T/p^mT\simeq A[p^m]$ for all $m\geq1$, there is a finite extension $H/F_v$ with $b,c\in H^0(H,A)$. Since $B$ is divisible, for every $N\geq M$ we can pick an element $b_N\in B[p^N]$ with the property that $p^{N-M}b_N=b$. Then ${(c,b_N)}_N=\zeta_{p^N}$ for some $p^N$-th root of unity $\zeta_{p^N}$ such that $\zeta_{p^{N}}^{N-M}=\zeta_{p^r}$; in particular, $\zeta_{p^N}$ is a primitive $p^{N-M+r}$-th root of unity. By assumption, 
$b_N\in H^0(H,A)$, and then the Galois-equivariance of ${(\cdot,\cdot)}_N$ ensures that 
\[ {(c,b_N)}_N=\sigma(\zeta_{p^{N}}) \]
for all $\sigma\in G_H$. Since $N$ is arbitrary and $\zeta_{p^N}$ is a primitive $p^{N-M+r}$-th root of unity, this implies that $H$ contains the cyclotomic $\Z_p$-extension of $F_v$, which is impossible because the extension $H/F_v$ is finite. \end{proof}

The proof of the next lemma, which follows from \cite[Propositions 1 and 2]{greenberg-iwasawa}, proceeds along the lines of the arguments in \cite[p. 94]{greenberg-cetraro}

\begin{lemma} \label{corank}
The $\Lambda$-corank of $\mathcal{P}^\Sigma_\mathrm{Gr}(A/F_\infty)$ is $(r-r^+)\cdot[F:\Q]=r^-\cdot[F:\Q]$. \end{lemma}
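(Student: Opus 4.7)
The plan is to compute the $\Lambda$-corank of each local factor of $\mathcal{P}^\Sigma_\mathrm{Gr}(A/F_\infty)$ separately and sum the contributions. For any prime $v$ of $F$ and any prime $w$ of $F_\infty$ above $v$, let $\Gamma_w \subset \Gamma$ denote the decomposition subgroup at $w$ and set $\Lambda_w := \cO[\![\Gamma_w]\!]$. Since $\Gamma \cong \Z_p$, the closed subgroup $\Gamma_w$ has finite index in $\Gamma$, so $v$ is finitely decomposed in $F_\infty$. Shapiro's lemma then identifies $\prod_{w \mid v} M_w$ with the $\Lambda$-module induced from the $\Lambda_w$-module $M_w$, so its $\Lambda$-corank equals the $\Lambda_w$-corank of a single $M_w$.

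For $v \in \Sigma$ with $v \nmid p$: since $F_\infty/F$ is unramified outside $p$, one has $I_{\infty,w} = I_v$, so the quotient $H^1(F_{\infty,w}, A)/H^1_\mathrm{ur}(F_{\infty,w}, A)$ injects into $H^1(I_v, A)$. The latter is cofinitely generated as an $\cO$-module (since $I_v$ is topologically finitely generated and $A$ is $\cO$-cofinitely generated), hence $\Lambda_w$-cotorsion. Such primes therefore contribute $\Lambda$-corank zero, paralleling the argument of \cite[p.~94]{greenberg-cetraro}.

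For $v \mid p$: the strategy is to establish
\[
\corank_{\Lambda_w} H^1(F_{\infty,w}, A) = r \cdot [F_v:\Q_p], \qquad \corank_{\Lambda_w} H^1_\mathrm{ord}(F_{\infty,w}, A) = r^+ \cdot [F_v:\Q_p],
\]
so that the local factor has $\Lambda_w$-corank $r^- \cdot [F_v:\Q_p]$. By part \eqref{condition e} of Assumption \ref{ass}, $A^+$ admits a filtration whose graded pieces are the $(K/\cO)(\eta_i)$ for characters $\eta_i$ that are neither trivial nor cyclotomic on any layer of the tower. Lemma \ref{geenberg2.3} computes the $\cO$-corank of each $H^1(F_{n,v_n}, (K/\cO)(\eta_i))$ as $[F_{n,v_n}:\Q_p]$; a d\'evissage along the filtration and passage to the direct limit then yield the $\Lambda_w$-corank of $H^1(F_{\infty,w}, A^+)$. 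The inflation-restriction sequence applied to $0 \to A^+ \to A \to A^- \to 0$, combined with $H^0(I_{\infty,w}, A^-) = 0$ (from part \eqref{condition b} of Assumption \ref{ass} and part (1) of Remark \ref{remark ass}), identifies $H^1_\mathrm{ord}(F_{\infty,w}, A)$ with the image of $H^1(F_{\infty,w}, A^+)$ up to a $\Lambda_w$-cotorsion error coming from $H^1_\mathrm{ur}(F_{\infty,w}, A^-)$. For the total corank of $H^1(F_{\infty,w}, A)$, Tate's local Euler--Poincar\'e characteristic formula applied at each finite layer, together with the vanishings $H^0(F_{n,v_n}, A) = 0$ (propagated from part \eqref{condition a} of Assumption \ref{ass} via \cite[Proposition 26]{Serre}) and $H^2(F_{n,v_n}, A) = 0$ (local Tate duality plus self-duality), gives $\cO$-corank $r \cdot [F_{n,v_n}:\Q_p]$, which a Nakayama-type limit upgrades to the stated $\Lambda_w$-corank.

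Summing the nonzero contributions over $v \mid p$ and using $\sum_{v \mid p}[F_v:\Q_p] = [F:\Q]$ yields the claimed total corank of $r^- \cdot [F:\Q]$. The most delicate step will be the clean identification $\corank_{\Lambda_w} H^1_\mathrm{ord}(F_{\infty,w}, A) = r^+ \cdot [F_v:\Q_p]$: here part \eqref{condition e} of Assumption \ref{ass} plays an essential role, since it ensures that each character $\eta_i$ satisfies the non-triviality and non-cyclotomicity hypotheses of Lemma \ref{geenberg2.3} uniformly along the cyclotomic tower, so that the corank computation can be performed character by character and then reassembled.
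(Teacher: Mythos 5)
Your proposal follows essentially the same skeleton as the paper's proof: decompose $\mathcal{P}^\Sigma_\mathrm{Gr}(A/F_\infty)$ prime by prime, show the factors away from $p$ are $\Lambda$-cotorsion, compute the corank at $v\mid p$ by comparing $H^1(F_{\infty,w},A)$ with $H^1(F_{\infty,w},A^+)$ via Lemma~\ref{geenberg2.3}, and sum using $\sum_{v\mid p}[F_v:\Q_p]=[F:\Q]$. The differences are cosmetic rather than structural: where the paper invokes \cite[Propositions~1 and~2]{greenberg-iwasawa} directly for the global corank of $H^1(F_{\infty,w},A)$ and the finiteness of $\mathcal P^v_\mathrm{Gr}(A/F)$ for $v\nmid p$, you reprove these from the local Euler characteristic (plus the vanishing of $H^0$ and $H^2$ at each finite layer) and from a direct embedding into $H^1(I_v,A)$, which is a mild gain in self-containedness. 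Two small points should be tightened. First, for $v\nmid p$ the justification that $H^1(I_v,A)$ is $\cO$-cofinitely generated should not rest on ``$I_v$ topologically finitely generated'' (wild inertia is a free pro-$\ell$ group of infinite rank); the correct reason is that wild inertia is pro-$\ell$ with $\ell\neq p$, so $H^1(I_v,A)\simeq H^1(I_v/P_v,A^{P_v})$, and the pro-$p$ part of the tame quotient is $\Z_p$, whence the cohomology group is a quotient of $A^{P_v}$ twisted, which is cofinitely generated. Second, you never address archimedean $v$ explicitly; the paper disposes of these by noting the factor has exponent $2$ (hence is trivial since $p$ is odd), and your Shapiro/inertia argument does not literally apply there, so this case should be stated separately even though it is trivial. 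Finally, your acknowledgement that the identification of $\corank_{\Lambda_w}H^1_\mathrm{ord}(F_{\infty,w},A)$ with $\corank_{\Lambda_w}H^1(F_{\infty,w},A^+)$ up to cotorsion error requires an argument is fair: this step is compressed to a single ``consequently'' in the paper, and your d\'evissage remark is a reasonable way to make it precise.
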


\begin{proof} Let $\mathcal{P}^v_\mathrm{Gr}(A/F_\infty)$ denote the
factor in $\mathcal{P}^\Sigma_\mathrm{Gr}(A/F_\infty)$ corresponding to the place $v$. For $v\nmid p$, $\mathcal{P}_\mathrm{Gr}^v(A/F_\infty)$ is $\Lambda$-cotorsion by \cite[Proposition 1]{greenberg-iwasawa}. This result is clear if the prime $v$ is archimedean, as in this case this module has exponent $2$. For finite primes $v\nmid p$, $\mathcal{P}^v_\mathrm{Gr}(A/F)$ is finite: this is a consequence of \cite[Proposition 2]{greenberg-iwasawa} combined with Lemma \ref{finite} and the isomorphism $A\simeq A^\vee(1)$. Since the map $\mathcal{P}_\mathrm{Gr}^v(A/F)\rightarrow\mathcal{P}_\mathrm{Gr}^v(A/F_\infty)^\Gamma$ is surjective by Lemma \ref{g surj}, $\mathcal{P}_\mathrm{Gr}^v(A/F_\infty)^\Gamma$ is finite as well, which implies that $\mathcal{P}_\mathrm{Gr}^v(A/F_\infty)$ is $\Lambda$-cotorsion; here we are using \cite[(2), p. 79]{greenberg-cetraro} and the fact that all the primes in $\Sigma$ are finitely decomposed in $F_\infty$. 

Now let $v\,|\,p$ be a place of $F_\infty$ and let $\Gamma_v$ be the corresponding decomposition group. By \cite[Proposition 1]{greenberg-iwasawa}, the $\Z_p[\![\Gamma_v]\!]$-corank of $H^1(F_{\infty,v},A)$ is $r\cdot[F_v:\Q_p]\cdot[K:\Q_p]$ (to apply this result, note that $K/\cO\simeq (\Q_p/\Z_p)^{[K:\Q_p]}$ as groups). Thus, the $\mathcal{O}[\![\Gamma_v]\!]$-corank of $H^1(F_{\infty,v},A)$ is $r\cdot[F_v:\Q_p]$. There is an isomorphism $A^+\simeq(K/\cO)^{r^+}$ of $\cO$-modules and the action of $G_{F_v}$ on $A^+$ is via the unramified characters $\eta_1,\dots,\eta_{r^+}$, which are non-trivial thanks to condition \eqref{condition e} in Assumption \ref{ass}. If $v_n$ denotes the prime of $F_n$ above $p$, then 
Lemma \ref{geenberg2.3} guarantees that the $\cO$-corank of $H^1(F_{n,v_n},A^+)$ is $r^+\cdot[F_{n,v_n}:\Q_p]$. This implies that $H^1(F_{\infty,v},A^+)$ has $\cO[\![\Gamma_v]\!]$-corank $r^+\cdot[F_v:\Q_p]$. Consequently, the $\mathcal{O}[\![\Gamma_v]\!]$-corank of the quotient 
$H^1(F_{\infty,v},A)\big/H^1_\mathrm{ord}(F_{\infty,v},A)$ is $(r-r^+)\cdot[F_v:\Q_p]$. Finally, the well-known formula 
\[ \sum_{v\mid p}[F_v:\Q_p]=[F:\Q] \]
(see, \emph{e.g.}, \cite[p. 39, Corollary 1]{lang-ANT}) concludes the proof. \end{proof}

Now we can prove a result establishing, in particular, the non-existence of proper $\Lambda$-submodules of finite index of $H^1(F_\Sigma/F_\infty,A)$.

\begin{proposition} \label{submodules}
The $\Lambda$-module $H^1(F_\Sigma/F_\infty,A)$ is cofinitely generated of rank $r^-\cdot[F:\Q]$ and has no proper $\Lambda$-submodules of finite index.
\end{proposition}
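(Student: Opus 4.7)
For the $\Lambda$-corank, the strategy---following \cite[Propositions 3 and 4]{greenberg-iwasawa} and \cite[Proposition 4.9]{greenberg-cetraro}---is to start from the exact sequence
\[ 0 \longrightarrow S \longrightarrow H^1(F_\Sigma/F_\infty, A) \xrightarrow{\delta_{\infty,\Sigma}} \mathcal{P}_\mathrm{Gr}^\Sigma(A/F_\infty) \]
coming from the definition of $S$. Proposition \ref{lemma4.1} (giving that $S$ is $\Lambda$-cotorsion) together with Lemma \ref{corank} (giving that $\mathcal{P}_\mathrm{Gr}^\Sigma(A/F_\infty)$ has $\Lambda$-corank $r^-\cdot[F:\Q]$) yields the upper bound $\corank_\Lambda H^1(F_\Sigma/F_\infty, A) \leq r^-\cdot[F:\Q]$. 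For the matching lower bound, I would apply the global Euler--Poincaré characteristic formula at the $\Lambda$-corank level, after reducing through the finite layers $F_n$ and the finite modules $A[\pi^m]$. The needed inputs are $H^0(F_\Sigma/F_\infty, A) = A(F_\infty) = 0$ (Lemma \ref{greenberg-lemma2.8}) and the vanishing $H^2(F_\Sigma/F_\infty, A) = 0$, discussed below.

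For the absence of proper $\Lambda$-submodules of finite index, the Pontryagin-dual statement to prove is that $H^1(F_\Sigma/F_\infty, A)^\vee$ contains no non-zero finite $\Lambda$-submodule. By the general criterion of \cite[Proposition 4]{greenberg-iwasawa}, this follows from the same two vanishings: inflation--restriction gives $H^1(F_\Sigma/F_n, A) \simeq H^1(F_\Sigma/F_\infty, A)^{\Gamma_n}$ thanks to $A(F_\infty)=0$ (where $\Gamma_n := \Gamma^{p^n}$), and combining this identification with the vanishing of $H^2(F_\Sigma/F_\infty, A)$ via a standard $\Lambda$-module argument comparing $\Gamma$-invariants and $\Gamma$-coinvariants establishes the claim.

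The crucial auxiliary vanishing $H^2(F_\Sigma/F_\infty, A) = 0$ is a form of weak Leopoldt that can be established via Poitou--Tate duality: writing
\[ H^2(F_\Sigma/F_\infty, A) = \varinjlim_{n,m} H^2(F_\Sigma/F_n, A[\pi^m]) \]
and dualizing at each finite level, the groups are controlled by $H^0(F_n, A[\pi^m]^\vee(1)) = H^0(F_n, T/\pi^m T)$, using the isomorphism $T \simeq A^\vee(1)$ from Assumption \ref{ass} \eqref{condition 4}. The triviality of $A(F_\infty)$ combined with the $\cO$-torsion-freeness of $T$ gives $T^{G_{F_\infty}} = 0$, and passing to the limit yields the desired vanishing. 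The main obstacle will be carefully handling the limit procedure for Poitou--Tate duality and keeping track of the local contributions at primes in $\Sigma$; once this vanishing is in hand, both assertions of the proposition follow from Greenberg's machinery.
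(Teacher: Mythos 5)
Your first step (the upper bound $\corank_\Lambda H^1(F_\Sigma/F_\infty,A)\leq r^-\cdot[F:\Q]$ from the cotorsionness of $S$ plus Lemma \ref{corank}) matches the paper, and your instinct to invoke Greenberg's $\Lambda$-Euler-characteristic formula is also the right tool. But the pivot of your argument --- the claim that $H^2(F_\Sigma/F_\infty,A)=0$ can be proved directly via Poitou--Tate duality from $A(F_\infty)=0$ (equivalently $T^{G_{F_\infty}}=0$) --- has a genuine gap. In the Poitou--Tate nine-term sequence for $A[\pi^m]$ over $F_n$, the group $H^0\bigl(F_n,A[\pi^m]^\vee(1)\bigr)\simeq H^0(F_n,T/\pi^mT)$ controls only the \emph{cokernel} of the localization map $H^2(F_\Sigma/F_n,A[\pi^m])\rightarrow\bigoplus_{v\in\Sigma}H^2(F_{n,v},A[\pi^m])$; the kernel of that map is $\Sha^2$, which is dual to $\Sha^1$ of the dual module, and the local $H^2$-terms at $\Sigma$ also survive. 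Neither of these is touched by the vanishing of $T^{G_{F_\infty}}$, so your limit argument does not yield $H^2(F_\Sigma/F_\infty,A)=0$. Indeed this vanishing is a form of the weak Leopoldt conjecture, which is not a formal consequence of such soft inputs; note that your sketch never uses the finiteness of $\Sel_\mathrm{BK}(A/F)$ (Assumption \ref{ass2}) at this step, whereas in the paper the vanishing of $H^2$ is precisely where that hypothesis enters, through the corank bookkeeping.

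The paper's route avoids this: Greenberg's formula (\cite[Proposition 3 and (34)]{greenberg-iwasawa}) gives $\corank_\Lambda H^1(F_\Sigma/F_\infty,A)-\corank_\Lambda H^2(F_\Sigma/F_\infty,A)=r^-\cdot[F:\Q]$, which already yields your lower bound without knowing $H^2=0$ (you only need $\corank_\Lambda H^2\geq0$). Comparing with the upper bound then forces $\corank_\Lambda H^2(F_\Sigma/F_\infty,A)=0$, and since $H^2(F_\Sigma/F_\infty,A)$ is $\Lambda$-cofree (\cite[Proposition 4]{greenberg-iwasawa}), corank zero upgrades to actual vanishing; the absence of proper finite-index $\Lambda$-submodules of $H^1(F_\Sigma/F_\infty,A)$ is then \cite[Proposition 5]{greenberg-iwasawa} (not Proposition 4, as you cite). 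So the fix is to reverse your logical order: deduce $H^2=0$ from the corank comparison and cofreeness, rather than trying to prove it first by duality.
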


\begin{proof} Since $\Sel_\mathrm{Gr}(A/F_\infty)$ is $\Lambda$-cotorsion by Proposition \ref{lemma4.1}, it follows from Lemma \ref{corank} that the $\Lambda$-corank of $H^1(F_\Sigma/F_\infty,A)$ is at most $r^-\cdot[F:\Q]$. By \cite[Proposition 3 and (34)]{greenberg-iwasawa}, the $\Lambda$-corank of $H^1(F_\Sigma/F_\infty,A)$ is $r^-\cdot[F:\Q]$ and the $\Lambda$-corank of $H^2(F_\Sigma/F_\infty,A)$ is $0$. Since  $H^2(F_\Sigma/F_\infty,A)$ is $\Lambda$-cofree by \cite[Proposition 4]{greenberg-iwasawa}, we deduce that $H^2(F_\Sigma/F_\infty,A)=0$. The lemma is a consequence of \cite[Proposition 5]{greenberg-iwasawa}. \end{proof}

\subsection{Triviality of coinvariants} 

Recall that $S=\Sel_\mathrm{Gr}(A/F_\infty)$. Our goal here is to prove that $S_\Gamma=0$. First of all, we need a lemma on the interaction between Pontryagin duals and torsion submodules, which is valid in a slightly more general context.

\begin{lemma} \label{C1} 
Let $N$ be a compact $\Lambda$-algebra, let $I$ be an ideal of $\Lambda$ and write $N[I]$ for the $I$-torsion submodule of $N$. There is an isomorphism of $\Lambda$-modules 
\[ N^\vee/IN^\vee\simeq N[I]^\vee. \]
\end{lemma}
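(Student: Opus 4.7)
The plan is to exploit the exactness of Pontryagin duality on the category of locally compact abelian groups, together with the reflexivity $N^{\vee\vee} \simeq N$ valid when $N$ is compact. Since $N$ is a compact $\Lambda$-module, $N^\vee$ is a discrete $\Lambda$-module with $\Lambda$-action given by $(a \cdot f)(n) := f(a \cdot n)$ for $a \in \Lambda$, $f \in N^\vee$, $n \in N$; and the submodule $IN^\vee \subset N^\vee$ is well defined (in particular $\Lambda$ is Noetherian, so $I$ is finitely generated).

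I would start from the tautological short exact sequence of discrete $\Lambda$-modules
\[
0 \longrightarrow IN^\vee \longrightarrow N^\vee \longrightarrow N^\vee/IN^\vee \longrightarrow 0
\]
and apply Pontryagin duality to obtain, using $N^{\vee\vee} \simeq N$, the exact sequence of compact $\Lambda$-modules
\[
0 \longrightarrow (N^\vee/IN^\vee)^\vee \longrightarrow N \longrightarrow (IN^\vee)^\vee \longrightarrow 0.
\]
All that remains is to identify the submodule $(N^\vee/IN^\vee)^\vee$ of $N$ with $N[I]$, after which a second application of Pontryagin duality yields $N^\vee/IN^\vee \simeq N[I]^\vee$.

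For the identification, I would argue that an element $n \in N \simeq N^{\vee\vee}$ lies in $(N^\vee/IN^\vee)^\vee$ if and only if the associated evaluation functional $f \mapsto f(n)$ on $N^\vee$ kills $IN^\vee$. Writing a generic element of $IN^\vee$ as a finite sum $\sum a_i f_i$ with $a_i \in I$ and $f_i \in N^\vee$, the condition becomes $f(a \cdot n) = 0$ for every $a \in I$ and every $f \in N^\vee$. Because $N^\vee$ separates the points of $N$ (again by Pontryagin duality), this is equivalent to $a \cdot n = 0$ for every $a \in I$, i.e.\ to $n \in N[I]$. The $\Lambda$-equivariance of the resulting isomorphism $(N^\vee/IN^\vee)^\vee \simeq N[I]$ is automatic from the functoriality of all maps involved.

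The main (and essentially only) obstacle is the bookkeeping around Pontryagin duality in the $\Lambda$-linear category: one must consistently use the convention $(a \cdot f)(n) = f(a \cdot n)$ for the dual $\Lambda$-action, check that $IN^\vee$ is a genuine $\Lambda$-submodule, and invoke the separation of points in the correct category (compact vs.\ discrete). Beyond these formal verifications, no computation is needed.
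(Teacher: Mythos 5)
Your proof is correct, but it runs in the opposite direction from the paper's. You dualize the tautological sequence $0\to IN^\vee\to N^\vee\to N^\vee/IN^\vee\to 0$, use reflexivity $N^{\vee\vee}\simeq N$ to view $(N^\vee/IN^\vee)^\vee$ as the annihilator of $IN^\vee$ inside $N$, identify that annihilator with $N[I]$ via separation of points, and then dualize once more. The paper instead starts on the compact side: it picks generators $x_1,\dots,x_n$ of $I$ (using that $\Lambda$ is Noetherian), forms the map $\xi\colon N\to\prod_i x_iN$, $n\mapsto (x_in)_i$, whose kernel is $N[I]$, dualizes once to get $\prod_i(x_iN)^\vee\xrightarrow{\xi^\vee}N^\vee\to N[I]^\vee\to 0$ (surjectivity on the right coming from compactness of the closed submodule $N[I]$), and then checks by a direct two-inclusion computation that $\operatorname{im}(\xi^\vee)=IN^\vee$. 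Your route is more conceptual and avoids any choice of generators of $I$, at the price of invoking the full strength of Pontryagin duality: exactness of the functor, reflexivity for both the compact module $N$ and the discrete torsion module $N^\vee/IN^\vee$, and the fact that continuous characters separate points of $N$ (all of which do hold here, provided $(\cdot)^\vee$ is read as the continuous dual, as it implicitly is in the paper). The paper's argument uses only a single dualization plus elementary bookkeeping, and its only extra input is finite generation of $I$, which is automatic. Both arguments establish the same $\Lambda$-linear isomorphism, and your identification of the orthogonal complement of $IN^\vee$ with $N[I]$ is exactly the content that the paper verifies by hand in the equality $\operatorname{im}(\xi^\vee)=IN^\vee$.
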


\begin{proof} Write $I=(x_1,\dots,x_n)$ and consider the map 
\[ \xi:N\longrightarrow\prod_{i=1}^nx_iN,\qquad n\longmapsto{(x_in)}_{i=1,\dots,n}, \] 
whose kernel is equal to $N[I]$. If $i:N[I]\hookrightarrow N$ denotes inclusion then Pontryagin duality gives an exact sequence of $\Lambda$-modules
\begin{equation} \label{exact-dual-eq}
\Bigg(\prod_{i=1}^nx_iN\Bigg)^{\!\!\!\vee}\overset{\xi^\vee}\longrightarrow N^\vee\overset {i^\vee} \longrightarrow N[I]^\vee\longrightarrow0; 
\end{equation}
here the surjectivity of $i^\vee$ is a consequence of $N[I]$ being closed in $N$, hence compact. On the other hand, sending $(\varphi_1,\dots,\varphi_n)$ to $\sum_i\varphi_i$ gives an isomorphism between $\prod_i(x_iN)^\vee$ and $\bigl(\prod_ix_iN\bigr)^{\!\vee}$, so we can rewrite \eqref{exact-dual-eq} as
\begin{equation} \label{exact-dual-eq2}
\prod_{i=1}^n(x_iN)^\vee\overset{\xi^\vee}\longrightarrow N^\vee\overset {i^\vee} \longrightarrow N[I]^\vee\longrightarrow0. 
\end{equation}
In light of \eqref{exact-dual-eq2}, we want to check that ${\rm im}(\xi^\vee)=IN^\vee$. First of all, let $\varphi=\sum_{i=1}^nx_i\varphi_i\in IN^\vee$, with $\varphi_i\in N^\vee$ for all $i=1,\dots,n$. Then $\varphi=\xi^\vee\bigl(({\varphi_1|}_{x_1N},\dots,{\varphi_n|}_{x_nN})\bigr)$, which shows that $\varphi\in{\rm im}(\xi^\vee)$. Conversely, let $\varphi\in{\rm im}(\xi^\vee)$; by definition, for every $i=1,\dots,n$ there exists $\varphi_i\in(x_iN)^\vee$ such that $\varphi=\xi^\vee\bigl((\varphi_1,\dots,\varphi_n)\bigr)$. For every $i$, the $\Lambda$-module $x_iN$ is compact because $N$ is, hence the inclusion $x_iN\hookrightarrow N$ gives a surjection $N^\vee\twoheadrightarrow(x_iN)^\vee$. Now for every $i=1,\dots,n$ choose a lift $\psi_i\in N^\vee$ of $\varphi_i$. It follows that $\varphi=\sum_{i=1}^nx_i\psi_i\in IN^\vee$, and the proof is complete. \end{proof} 

The vanishing of $S_\Gamma$ will be a consequence of the following result.

\begin{proposition} \label{greenberg-prop2.16}
${H^1(F_\Sigma/F_\infty,A)}_\Gamma=0$.
\end{proposition}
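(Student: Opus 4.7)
The plan is to identify $H^1(F_\Sigma/F_\infty,A)_\Gamma$ with the global cohomology group $H^2(F_\Sigma/F,A)$ via Hochschild--Serre, and then to prove the vanishing of the latter using global Poitou--Tate duality together with Assumption \ref{ass2}. Since $\Gamma\simeq\Z_p$ has $p$-cohomological dimension $1$, the Hochschild--Serre spectral sequence for the tower $F_\Sigma\supset F_\infty\supset F$ collapses to short exact sequences
\[ 0\longrightarrow H^1\bigl(\Gamma,H^{n-1}(F_\Sigma/F_\infty,A)\bigr)\longrightarrow H^n(F_\Sigma/F,A)\longrightarrow H^n(F_\Sigma/F_\infty,A)^\Gamma\longrightarrow 0 \]
in every degree $n$. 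Specializing to $n=2$ and invoking the vanishing $H^2(F_\Sigma/F_\infty,A)=0$ established in the proof of Proposition \ref{submodules}, together with the procyclic identification $H^1(\Gamma,M)=M_\Gamma$ for discrete $\Gamma$-modules, will yield a canonical isomorphism $H^1(F_\Sigma/F_\infty,A)_\Gamma\simeq H^2(F_\Sigma/F,A)$. Thus the proposition reduces to showing $H^2(F_\Sigma/F,A)=0$.

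For the latter, I would combine $\cO$-divisibility with $\cO$-corank zero. For divisibility, applying $G_{F,\Sigma}$-cohomology to $0\to A[\pi^n]\to A\xrightarrow{\pi^n}A\to 0$ shows that multiplication by $\pi^n$ is surjective on $H^2(F_\Sigma/F,A)$, since $H^3(F_\Sigma/F,A[\pi^n])=0$ for $F$ a number field and $p$ odd. For corank zero, I would invoke the $9$-term Poitou--Tate exact sequence for the $G_{F,\Sigma}$-module $A$: by Assumption \ref{ass}\eqref{condition 4}, $A^*\simeq T$ as Galois modules, and the relevant piece of the sequence reads
\[ H^2(F_\Sigma/F,A)\longrightarrow\bigoplus_{v\in\Sigma}H^2(F_v,A)\longrightarrow H^0(F_\Sigma/F,T)^\vee\longrightarrow 0, \]
with kernel of the first arrow equal to $\Sha^2(F,A)\simeq\Sha^1(F,T)^\vee$. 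For finite $v\in\Sigma$ with $v\nmid p$, Lemma \ref{finite} forces $A^{G_v}$ to be finite; since $T$ is $\cO$-torsion-free, this implies $T^{G_v}=0$, hence $H^2(F_v,A)\simeq H^0(F_v,T)^\vee=0$ by local Tate duality. For $v\mid p$ the same argument applies via Assumption \ref{ass}\eqref{condition 7}\eqref{condition a}; for archimedean $v$ the vanishing is automatic because $p$ is odd. Analogously, Lemma \ref{greenberg-lemma2.8} gives $A^{G_F}=0$, hence $T^{G_F}=H^0(F_\Sigma/F,T)=0$. Therefore $H^2(F_\Sigma/F,A)\simeq\Sha^1(F,T)^\vee$; the inclusion $\Sha^1(F,T)\hookrightarrow H^1_f(F,T)$ combined with the Bloch--Kato identity $\dim_K H^1_f(F,V)=\corank_\cO\Sel_\mathrm{BK}(A/F)=0$ (which uses Assumption \ref{ass2}) shows that $H^1_f(F,T)$ is $\cO$-torsion and hence $\Sha^1(F,T)$ is finite. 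Finiteness combined with divisibility then yields $H^2(F_\Sigma/F,A)=0$.

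The main technical obstacle will be the careful handling of global duality in the $p$-divisible coefficient setting---in particular, rigorously passing to the limit of Poitou--Tate exact sequences over finite coefficients and precisely identifying $\Sha^2(F,A)$ with $\Sha^1(F,T)^\vee$ via the skew-symmetric pairing from Assumption \ref{ass}\eqref{condition 4}. A secondary delicate point is the justification that the local-duality computations at primes $v\in\Sigma$ fit together into a valid global statement after passing to the limit in $n$ on $A[\pi^n]$.
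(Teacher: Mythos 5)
Your approach is correct, and it differs substantially from the paper's. The paper first invokes Proposition \ref{submodules} to reduce the claim to the \emph{finiteness} of ${H^1(F_\Sigma/F_\infty,A)}_\Gamma$, and then establishes finiteness by splitting off the maximal $\Lambda$-divisible submodule, comparing $\cO$-coranks of $\Gamma$-invariants and $\Gamma$-coinvariants via the structure theory of $\Lambda$-modules, and finally matching these coranks against the corank computation from Lemma \ref{corank} and the inflation--restriction isomorphism $H^1(F_\Sigma/F,A)\simeq H^1(F_\Sigma/F_\infty,A)^\Gamma$. You instead exploit the full degeneration of the Hochschild--Serre spectral sequence for $F_\Sigma\supset F_\infty\supset F$ (using $\mathrm{cd}_p(\Gamma)=1$ together with the vanishing $H^2(F_\Sigma/F_\infty,A)=0$, which is established inside the proof of Proposition \ref{submodules}) to identify ${H^1(F_\Sigma/F_\infty,A)}_\Gamma$ with $H^2(F_\Sigma/F,A)$, and then kill the latter by Poitou--Tate global duality over $F$: the local $H^2(F_v,A)$ vanish by Lemma \ref{finite} and Assumption \ref{ass}\eqref{condition a}, reducing $H^2(F_\Sigma/F,A)$ to $\Sha^2(F,A)\simeq\Sha^1(F,T^*)^\vee$, and the finiteness of $\Sel_\mathrm{BK}(A/F)$ forces $\Sha^1(F,T^*)\subset H^1_f(F,T^*)$ to be finite (indeed zero, since $A(F)=0$ makes $H^1(F_\Sigma/F,T)$ torsion-free). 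This is a clean and well-established alternative: it replaces the corank bookkeeping and the appeal to the ``no proper finite-index $\Lambda$-submodules'' property by a single application of global duality over the base field $F$. What you give up is some self-containedness: you need to be explicit that the identification $T^*\simeq T$ comes from the perfect pairing of Assumption \ref{ass}\eqref{condition 4}, and you need the (routine but not entirely cost-free) limit arguments to extend the Poitou--Tate nine-term sequence and the duality $\Sha^2(F,A)\simeq\Sha^1(F,T^*)^\vee$ from finite coefficients $A[\pi^n]$ to the pair $(A,T^*)$. The step $\dim_K H^1_f(F,V)=\corank_\cO\Sel_\mathrm{BK}(A/F)$ also deserves a sentence of justification (it uses $A(F)=0$ to ensure $H^1_f(F,T)\hookrightarrow H^1_f(F,V)$ with divisible image inside $\Sel_\mathrm{BK}(A/F)$). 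With these points spelled out, the argument is sound.
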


\begin{proof} By Proposition \ref{submodules}, it suffices to show that ${H^1(F_\Sigma/F_\infty,A)}_\Gamma$ is finite. Let us denote by ${H^1(F_\Sigma/F_\infty,A)}_{\Lambda\text{-div}}$ the maximal $\Lambda$-divisible submodule of $H^1(F_\Sigma/F_\infty,A)$ and define the $\Lambda$-module $Q$ via the short exact sequence 
\begin{equation} \label{div-eq}
0\longrightarrow {H^1(F_\Sigma/F_\infty,A)}_{\Lambda\text{-div}}\longrightarrow H^1(F_\Sigma/F_\infty,A)\longrightarrow Q\longrightarrow 0. 
\end{equation}
The Pontryagin dual $Q^\vee$ of $Q$ is the torsion $\Lambda$-submodule of the Pontryagin dual $Y$ of $H^1(F_\Sigma/F_\infty,A)$; it follows from Proposition \ref{submodules} that there is a pseudoisomorphism
\begin{equation} \label{y-eq}
Y\sim\Lambda^{r^-\cdot[F:\Q]}\oplus Q^\vee, 
\end{equation}
and $Q$ is a cofinitely generated cotorsion $\Lambda$-module. Set $M:=H^1(F_\Sigma/F_\infty,A)$ and fix, as before, a topological generator $\gamma$ of $\Gamma$. Combining the vanishing of $H^2(\Gamma,N)$ for all torsion discrete $\Gamma$-modules $N$ (see, \emph{e.g.}, \cite[Corollary 4.27]{hida-modular}) with the identifications $H^1(\Gamma,N)=N/(\gamma-1)N=N_\Gamma$ for every $\Gamma$-module $N$, exact sequence \eqref{div-eq} yields an exact sequence
\begin{equation} \label{div-eq2}
0\longrightarrow M_{\Lambda\text{-div}}^\Gamma\longrightarrow M^\Gamma\longrightarrow Q^\Gamma\longrightarrow{(M_{\Lambda\text{-div}})}_\Gamma\longrightarrow M_\Gamma\longrightarrow 
Q_\Gamma\longrightarrow 0 
\end{equation}
in Galois cohomology. Since $(M_{\Lambda\text{-div}})_\Gamma=0$ because $M_{\Lambda\text{-div}}$ is $\Lambda$-divisible, it follows from \eqref{div-eq2} that $M_\Gamma\simeq Q_\Gamma$. Therefore, it is enough to show that $Q_\Gamma$ is finite. Furthermore, since $Q$ is a finitely generated cotorsion $\Lambda$-module, the exact sequence 
\begin{equation} \label{q-eq}
0\longrightarrow Q^\Gamma\longrightarrow Q\xrightarrow{(\gamma-1)\cdot}Q\longrightarrow Q_\Gamma\longrightarrow0 
\end{equation}
shows that $Q^\Gamma$ and $Q_\Gamma$ have the same $\mathcal{O}$-corank. Thus, we are reduced to proving that the $\cO$-corank of $Q^\Gamma$ is $0$.

Lemma \ref{corank} implies that the $\mathcal{O}$-corank of $\mathcal{P}_\mathrm{Gr}^\Sigma(A/F_\infty)^\Gamma$ is $r^-\cdot[F:\Q]$. By Lemma \ref{g surj} and the finiteness of $\ker(g)$, the $\cO$-corank of $\mathcal{P}_\mathrm{BK}^\Sigma(A/F)$ is $r^-\cdot[F:\Q]$ as well. In addition, by Lemma \ref{surjectivity lemma}, the surjection $\delta_\Sigma:H^1(F_\Sigma/F,A)\twoheadrightarrow\mathcal{P}_\mathrm{BK}^\Sigma(A/F)$ has the finite group $\Sel_{\mathrm{BK}}(A/F)$ as its kernel, hence
\begin{equation} \label{corank-F-eq}
\corank_\cO\bigl(H^1(F_\Sigma/F,A)\bigr)=r^-\cdot[F:\Q].
\end{equation}
Now consider the inflation-restriction exact sequence 
\[ 0\longrightarrow H^1\bigl(\Gamma,A(F_\infty)\bigr)\longrightarrow H^1(F_\Sigma/F,A)\overset{\theta}\longrightarrow H^1(F_\Sigma/F_\infty,A)^\Gamma\longrightarrow H^2\bigl(\Gamma,A(F_\infty)\bigr). \] 
The map $\theta$ is an isomorphism because, by Lemma \ref{greenberg-lemma2.8}, $A(F_\infty)=0$. Thus, in particular, we get an equality
\begin{equation} \label{coranks-eq}
\corank_\cO\bigl(H^1(F_\Sigma/F,A)\bigr)=\corank_\cO\bigl(H^1(F_\Sigma/F_\infty,A)^\Gamma\bigr).
\end{equation}
Taking $I=(\gamma-1)$ and $N=Q$ in Lemma \ref{C1}, we obtain an isomorphism of $\Lambda$-modules ${(Q^\vee)}_\Gamma\simeq\bigl(Q^\Gamma\bigr)^\vee$. On the other hand, exact sequence \eqref{q-eq} with $Q^\vee$ in place of $Q$ shows that $(Q^\vee)^\Gamma$ and ${(Q^\vee)}_\Gamma$ have the same $\cO$-rank. We surmise that 
\begin{equation} \label{O-eq}
\rank_\cO\bigl((Q^\vee)^\Gamma\bigr)=\rank_\cO\bigl({(Q^\vee)}_\Gamma\bigr)=\rank_\cO\bigl((Q^\Gamma)^\vee\bigr)=\corank_\cO\bigl(Q^\Gamma\bigr).
\end{equation}
Analogously, since $Y=H^1(F_\Sigma/F_\infty,A)^\vee$, we get an equality
\begin{equation} \label{H-eq}
\rank_\cO\bigl(Y^\Gamma\bigr)=\corank_\cO\bigl(H^1(F_\Sigma/F_\infty,A)^\Gamma\bigr).
\end{equation}
In light of \eqref{O-eq} and \eqref{H-eq}, pseudoisomorphism \eqref{y-eq} ensures that
\begin{equation} \label{corank-eq}
\corank_\cO\bigl(H^1(F_\Sigma/F_\infty,A)^\Gamma\bigr)=r^-\cdot[F:\Q]+\corank_\cO\bigl(Q^\Gamma\bigr).
\end{equation} 
Finally, combining \eqref{corank-F-eq}, \eqref{coranks-eq} and \eqref{corank-eq} gives $\corank_\cO\bigl(Q^\Gamma\bigr)=0$, as was to be shown. \end{proof}

Now we can turn to the vanishing result that will play a crucial role in the proof of our main theorem.

\begin{corollary} \label{greenberg-coro2.17} 
$S_\Gamma=0$. 
\end{corollary}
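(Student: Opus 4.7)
The plan is to deduce $S_\Gamma=0$ from Proposition \ref{greenberg-prop2.16} by a diagram chase in \eqref{greenberg-diagram}. Writing $M$ for the image of $\delta_{\infty,\Sigma}\colon H^1(F_\Sigma/F_\infty,A)\to \mathcal{P}_\mathrm{Gr}^\Sigma(A/F_\infty)$, I would take $\Gamma$-cohomology of the short exact sequence
\[ 0\longrightarrow S\longrightarrow H^1(F_\Sigma/F_\infty,A)\longrightarrow M\longrightarrow 0. \]
Because $\Gamma\simeq\Z_p$ has cohomological dimension $1$ and $H^1(\Gamma,-)$ coincides with coinvariants on discrete modules, this gives a six-term exact sequence whose terminal segment reads
\[ \cdots\longrightarrow H^1(F_\Sigma/F_\infty,A)^\Gamma\xrightarrow{\delta_{\infty,\Sigma}^\Gamma}M^\Gamma\longrightarrow S_\Gamma\longrightarrow H^1(F_\Sigma/F_\infty,A)_\Gamma\longrightarrow M_\Gamma\longrightarrow 0. \]
By Proposition \ref{greenberg-prop2.16} the fourth term vanishes, so $M_\Gamma=0$ and $S_\Gamma\simeq\coker(\delta_{\infty,\Sigma}^\Gamma)$.

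It then suffices to show that $\delta_{\infty,\Sigma}^\Gamma$ is surjective onto $M^\Gamma$. Here I would invoke the commutativity $\delta_{\infty,\Sigma}^\Gamma\circ h=g\circ\delta_\Sigma$ supplied by \eqref{greenberg-diagram}: since $\delta_\Sigma$ is surjective (Lemma \ref{surjectivity lemma}), $h$ is an isomorphism (Lemma \ref{lemma2.3}), and $g$ is surjective (Lemma \ref{g surj}), the image of $\delta_{\infty,\Sigma}^\Gamma$ inside $\mathcal{P}_\mathrm{Gr}^\Sigma(A/F_\infty)^\Gamma$ is the whole group. Since that image is a priori contained in $M\cap\mathcal{P}_\mathrm{Gr}^\Sigma(A/F_\infty)^\Gamma=M^\Gamma$, we must have $M^\Gamma=\mathcal{P}_\mathrm{Gr}^\Sigma(A/F_\infty)^\Gamma$, and therefore $\delta_{\infty,\Sigma}^\Gamma$ surjects onto $M^\Gamma$. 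Feeding this back into the six-term sequence yields $S_\Gamma=0$.

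Essentially all of the genuine work has been folded into Proposition \ref{greenberg-prop2.16}, which itself combines the non-existence of proper finite-index $\Lambda$-submodules from Proposition \ref{submodules} with a careful $\cO$-corank tally. Once that proposition is accepted, the corollary is a purely formal consequence, with the only mildly delicate step being the sandwich argument that identifies $M^\Gamma$ with the full $\Gamma$-invariants $\mathcal{P}_\mathrm{Gr}^\Sigma(A/F_\infty)^\Gamma$.
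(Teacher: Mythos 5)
Your argument is correct and is essentially the paper's own proof: both take $\Gamma$-cohomology of the short exact sequence $0\to S\to H^1(F_\Sigma/F_\infty,A)\to\im(\delta_{\infty,\Sigma})\to0$, deduce surjectivity of $\delta_{\infty,\Sigma}^\Gamma$ from the commutative diagram together with Lemmas \ref{surjectivity lemma}, \ref{lemma2.3} and \ref{g surj}, and conclude via Proposition \ref{greenberg-prop2.16}. The only difference is cosmetic (you kill the cokernel after invoking the vanishing of $H^1(F_\Sigma/F_\infty,A)_\Gamma$, whereas the paper uses surjectivity to embed $S_\Gamma$ into that vanishing group), so no further comment is needed.
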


\begin{proof} With notation as in \S \ref{surjectivity-subsec}, set $\tilde{\mathcal{P}}^\Sigma_\mathrm{Gr}(A/F_\infty):=\im(\delta_{\Sigma,\infty})\subset\mathcal{P}^\Sigma_\mathrm{Gr}(A/F_\infty)$, so that there is a short exact sequence
\begin{equation} \label{tilde-P-eq}
0\longrightarrow S\longrightarrow H^1(F_\Sigma/F_\infty,A)\xrightarrow{\delta_{\Sigma,\infty}}\tilde{\mathcal{P}}^\Sigma_\mathrm{Gr}(A/F_\infty)\longrightarrow0 
\end{equation}
of $\Gamma$-modules. Diagram \eqref{greenberg-diagram} and Lemma \ref{g surj} imply that $\delta^\Gamma_{\infty,\Sigma}$ is surjective. It follows that $\delta^\Gamma_{\infty,\Sigma}$ induces a surjection
\[ \delta^\Gamma_{\infty,\Sigma}:H^1(F_\Sigma/F_\infty,A)^\Gamma\longepi\tilde{\mathcal{P}}^\Sigma_\mathrm{Gr}(A/F_\infty)^\Gamma, \]
which we denote by the same symbol. We can extract from the long exact sequence in cohomology associated with \eqref{tilde-P-eq} an exact sequence
\[ H^1(F_\Sigma/F_\infty,A)^\Gamma\xrightarrow{\delta^\Gamma_{\infty,\Sigma}}\tilde{\mathcal{P}}^\Sigma_\mathrm{Gr}(A/F_\infty)^\Gamma\longrightarrow S_\Gamma\longrightarrow{H^1(F_\Sigma/F_\infty,A)}_\Gamma. \]
Since $\delta^\Gamma_{\infty,\Sigma}$ is surjective, we deduce that $S_\Gamma$ embeds into ${H^1(F_\Sigma/F_\infty,A)}_\Gamma$, and the triviality of $S_\Gamma$ follows from Proposition \ref{greenberg-prop2.16}. \end{proof}

\begin{remark}
A result analogous to Corollary \ref{greenberg-coro2.17} in an anticyclotomic imaginary quadratic setting can be found in \cite[Lemma 3.3.5]{JSW}.
\end{remark}

\section{Main result} \label{main-sec}

Putting together the results we have collected so far, we obtain the main theorem of this paper. For the convenience of the reader, we recall our setting. 

Let $F$ be a number field, let $\cO$ be the valuation ring of a finite field extension $K$ of $\Q_p$ and let $T$ be a free $\cO$-module that is equipped with a continuous action of the absolute Galois group of $F$ satisfying Assumption \ref{ass}. Let $F_\infty/F$ be the cyclotomic $\Z_p$-extension of $F$ and let $\Lambda:=\mathcal{O}[\![\Gamma]\!]$ be the associated Iwasawa algebra, where $\Gamma:=\Gal(F_\infty/F)\simeq\Z_p$. Let $A:=T\otimes_{\cO}(K/\cO)$, let $\Sel_{\mathrm{Gr}}(A/F_\infty)$ be the Greenberg Selmer group of $A$ over $F_\infty$ and let $\Sel_\mathrm{BK}(A/F)$ the Bloch--Kato Selmer group of $A$ over $F$. Finally, let $c_v(A)$ be the Tamagawa number of $A$ at a prime $v$ of $F$ and denote by $\Sigma$ the (finite) set of primes $v$ of $F$ such that either $v$ is archimedean or $v$ lies above $p$ or $A$ is ramified at $v$. Notice that the finiteness of $\Sel_\mathrm{BK}(A/F)$ in the statement below is Assumption \ref{ass2}.

\begin{theorem} \label{main theorem}
Suppose that $\Sel_\mathrm{BK}(A/F)$ is finite. Then
\begin{enumerate} 
\item $\Sel_\mathrm{Gr}(A/F_\infty)$ is $\Lambda$-cotorsion; 
\item if $\mathcal{F}$ is the characteristic power series of the Pontryagin dual of $\Sel_\mathrm{Gr}(A/F_\infty)$, then $\mathcal{F}(0)\neq0$;
\item there is an equality 
\[ \#\bigl(\cO/\mathcal{F}(0)\cdot\cO\bigr)=\#\Sel_\mathrm{BK}(A/F)\cdot\prod_{\substack{v\in\Sigma\\v\nmid p}}c_v(A). \]
\end{enumerate}
\end{theorem}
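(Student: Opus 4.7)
The plan is to assemble the ingredients already prepared in Sections 3--5 without any additional heavy lifting: the theorem is essentially a bookkeeping consequence of Propositions \ref{lemma4.1}, \ref{lemma5.11}, \ref{greenberg1} and Corollary \ref{greenberg-coro2.17}, so the proof should amount to stringing these together in the right order.

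First, I would invoke Proposition \ref{lemma4.1} to conclude immediately that $S=\Sel_\mathrm{Gr}(A/F_\infty)$ is $\Lambda$-cotorsion and that $S^\Gamma$ is finite; this proves assertion (1) and legitimises the definition of the characteristic power series $\mathcal{F}\in\Lambda$ of $X=S^\vee$. Second, I would apply Proposition \ref{lemma5.11} to both conclude assertion (2), namely $\mathcal{F}(0)\neq0$, and to reduce the computation of $\#\bigl(\cO/\mathcal{F}(0)\cO\bigr)$ to the determination of the Euler characteristic quotient $\#S^\Gamma\big/\#S_\Gamma$, as encoded in \eqref{greenberg-eq2.2}.

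Third, I would evaluate the numerator using Proposition \ref{greenberg1}, which gives
\[ \#S^\Gamma=\#\Sel_\mathrm{BK}(A/F)\cdot\prod_{\substack{v\in\Sigma\\v\nmid p}}c_v(A). \]
Fourth, I would evaluate the denominator using Corollary \ref{greenberg-coro2.17}, which yields $\#S_\Gamma=1$. Substituting both into \eqref{greenberg-eq2.2} produces assertion (3).

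There is no genuine obstacle left at this stage: all the delicate work (Greenberg's control argument, the prime-by-prime analysis of $\ker(r_{v,w})$ at $v\in\Sigma$ with $v\nmid p$ via Lemma \ref{greenberg-lemma2.14}, the crucial injectivity Lemma \ref{greenberg-lemma2.15} at $v\mid p$ that eliminates the $\tilde E_v(\F_v)$ factor, the surjectivity of $\delta_\Sigma$ in Lemma \ref{surjectivity lemma} that removes the $E(F)_p$ factor via Lemma \ref{greenberg-lemma2.8}, and the vanishing of $S_\Gamma$ derived from the absence of proper finite-index $\Lambda$-submodules of $H^1(F_\Sigma/F_\infty,A)$) has already been carried out in the preceding sections. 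If anything, the only point worth underlining in the write-up is that the terms in \eqref{greenbergintro} corresponding to $E(F)_p$ and to $\tilde E_v(\F_v)$ are \emph{both} absent in our formula precisely because of parts \eqref{condition a}, \eqref{condition b}, \eqref{condition c}, \eqref{condition e} of Assumption \ref{ass} — this is the arithmetic content hidden behind the clean final statement, and it is worth a short remark either just before or just after the boxed proof.
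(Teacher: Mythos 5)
Your proposal is correct and follows exactly the paper's own proof: parts (1) and (2) come from Propositions \ref{lemma4.1} and \ref{lemma5.11} (as recorded in \eqref{1 and 2} and \eqref{greenberg-eq2.2}), and part (3) is obtained by substituting Proposition \ref{greenberg1} for $\#S^\Gamma$ and Corollary \ref{greenberg-coro2.17} for $\#S_\Gamma$ into \eqref{greenberg-eq2.2}. Nothing is missing.
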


\begin{proof} Parts (1) and (2) are \eqref{1 and 2}, while part (3) follows by combining \eqref{greenberg-eq2.2}, Proposition \ref{greenberg1} and Corollary \ref{greenberg-coro2.17}. \end{proof} 

\begin{remark}
The equality in part (3) of Theorem \ref{main theorem} can be equivalently formulated as
\[ \length_\cO\bigl(\cO/\mathcal{F}(0)\cdot\cO\bigr)=\length_\cO\bigl(\Sel_\mathrm{BK}(A/F)\bigr)\cdot\ord_\p\biggl(\prod\nolimits_{\substack{v\in\Sigma\\v\nmid p}}\,c_v(A)\!\biggr), \]
where $\length_\cO(\star)$ denotes the length of the $\cO$-module $\star$ and $\ord_\p$ is the $\p$-adic valuation.
\end{remark}

\begin{remark}
An analogue of part (3) of Theorem \ref{main theorem} when $F$ is an imaginary quadratic field and $F_\infty$ is the anticyclotomic $\Z_p$-extension of $F$ is provided by \cite[Theorem 3.3.1]{JSW}.
\end{remark}

\bibliographystyle{amsplain}
\bibliography{Iwasawa}
\end{document}